\numberwithin{equation}{section}
\newtheorem{theorem}{Theorem}[subsection]
\newtheorem{corollary}[theorem]{Corollary}
\newtheorem{proposition}[theorem]{Proposition}
\theoremstyle{definition}
\newtheorem{example}[theorem]{Example}
\newtheorem{remark}[theorem]{Remark}
\newcommand{\U}{{\mathcal U}}
\newcommand{\R}{\mathbb R}
\newcommand{\Z}{\mathbb Z}
\newcommand{\id}{{\rm id}}
\newcommand{\pr}{{\rm pr}}
\newcommand{\xto}{\xrightarrow}
\newcommand{\action}{\curvearrowright}
\newcommand{\raction}{\curvearrowleft}
\newcommand{\toto}{\rightrightarrows}
\newcommand{\then}{\Rightarrow}
\newcommand{\from}{\leftarrow}
\newcommand{\xfrom}{\xleftarrow}
\newcommand{\dashto}{\dashrightarrow}
\DeclareMathOperator{\codim}{codim}
\DeclareMathOperator{\im}{im}
\title{\bf Lie Groupoids and Differentiable Stacks}
\author{Matias L. del Hoyo\thanks{Funded by FCT Postdoc Fellowship PTDC/MAT/117762/2010, Portugal.}}
\date{}
\def\address#1{\noindent {\sc #1}}
\def\email#1{\noindent \sf{#1}}
\def\url#1{\noindent \sf{#1}}
\begin{document}

\maketitle

\vspace{-10pt}
\centerline{\it Instituto Superior T\'ecnico, Lisboa, Portugal.}

\vspace{20pt}

\noindent{\bf Abstract.}{
This is a concise introduction to the theory of Lie groupoids, with emphasis in their role as models for stacks. After some preliminaries, we review the foundations on Lie groupoids, and we carefully study equivalences and proper groupoids.

Differentiable stacks are geometric objects which have manifolds and orbifolds as special instances, and can be presented as the transverse geometry of a Lie groupoid. 
Two Lie groupoids are equivalent if they are presenting the same stack, and proper groupoids are presentations of separated stacks, which by the linearization theorem are locally modeled by linear actions of compact groups.
We discuss all these notions in detail.

Our treatment diverges from the expositions already in the literature, looking for a complementary insight over this rich theory that is still in development.}

\medskip

\noindent{\bf Mathematics Subject Classification (2000).}{
22A22, 
 58H05. 
}

\medskip

\noindent{\bf Keywords.}{
Lie Groupoids, Differentiable Stacks, Linearization.
}

\medskip

\begin{small}

\tableofcontents

\end{small}


\section{Introduction}

Lie groupoids constitute a general framework which has received much attention lately.
They generalize group actions, submersions, foliations, pseudogroups and principal bundles, among other construction, providing a new perspective to classic geometric questions and results.
Besides, Lie groupoids can be seen as an intermediate step in defining differentiable stacks, some geometric objects admiting singularities and generalizing both manifolds and orbifolds.

Differentiable stacks can be defined within the language of stacks, an abstract concept introduced by Grothendieck in his work on algebraic geometry (cf. \ref{stack}).
Here we avoid that paraphernalia and follow a more concrete approach, according to which a differentiable stack is what encodes the transversal geometry of a Lie groupoid.

Every Lie groupoid has an underlying differentiable stack, and two Lie groupoids have the same one if they are equivalent.
These equivalences, sometimes called Morita equivalences, can be realized either by principal bibundles or by chains of fully faithful essentially surjective maps.
Many properties of Lie groupoids are invariant under equivalences because they are actually properties of their orbit stacks.

Proper groupoids constitute an important family of Lie groupoids. It includes manifolds, compact groups, submersions and proper actions, among others. They have a Hausdorff orbit space and their isotropy groups are compact. Moreover, they can be linearized around an orbit, which implies that their underlying stacks, called separated, can locally be modeled by linear actions of compact groups.

\medskip

These notes contain the foundations of Lie groupoids with special emphasis in equivalences and proper groupoids, including the relatively new results on linearization. 
We pursued a self-contained presentation with a stress in examples, and avoiding when possible technical digressions.

Even though most of the material is already available in the literature, we provide a new perspective over known results, such as new proofs and examples, and we also include new subsidiary results:
a description on the differential of the anchor (cf. \ref{diffanch}), 
a characterization of Morita maps by means of the normal representations (cf. \ref{charequi}), a discussion of stable orbits (cf. \ref{stable2}),
and a reduction on Zung's theorem that simplifies the proof (cf. \ref{step1}),
to mention some of them.

Our manuscript omits Lie algebroids, the infinitesimal counterpart of Lie groupoids, and the interesting theory they play together. This can be found elsewhere, see e.g. \cite{cw}, \cite{cf}, \cite{dz}, \cite{mkbook}, \cite{mm}. 
Note that the topics studied here, equivalences and proper groupoids, have not a known infinitesimal version in the interplay between groupoids and algebroids.

To finish, let us mention that the linearization problem is studied here in the spirit of the original works \cite{weinstein} and \cite{zung}, and the more recent paper \cite{cs}. 
A completely new approach will be presented in \cite{riemannian}, where 
we define compatible metrics with the groupoid structure, and establish the linearization by exponential maps, providing both a stronger version and a simpler proof for this important theorem.


\medskip

\noindent{\it Organization. }
In section 2 we recall some preliminaries, in section 3 we present a brief self-contained introduction to Lie groupoids, we carefully discuss equivalences of Lie groupoids in section 4, and finally in section 5 we deal with proper groupoids and linearization. A more detailed description can be found at the beginning of each section.

\medskip

\noindent{\it Acknowledgments. }
These notes were born out of expositions in workgroup seminars at IMPA, Rio de Janeiro, and at IST, Lisbon.
I thank Rui Loja Fernandes for encouraging me to write them and for his comments on preliminary versions.
I am indebted to him and to Henrique Bursztyn for their guidance and support.
I also want to thank Fernando Cukierman, Reimundo Heluani, Alejandro Cabrera, Thiago Drummond, Olivier Brahic, David Martinez and Daniele Sepe for the fruitful conversations.


\section{Preliminaries}

Throughout this section we collect basic facts that are scattered on the literature and provide alternative formulations for some of them.
The topics are proper maps, pullbacks and quotients of smooth manifolds, and the structure of submersions. These results will be needed in the subsequent sections.

We refer to \cite{bourbaki} for a detailed exposition on proper maps, and to \cite{dieudonne} and \cite{lang} for generalities on differential geometry. 


\subsection{Properness and properness at a point}\label{sectionpropmaps}

All our spaces are assumed to be second countable, locally compact and Hausdorff. This includes smooth manifolds and Hausdorff quotients of them.

Let $X,Y$ be two spaces. A continuous map $f:X\to Y$ is {\bf proper} if it satisfies any, and hence all, of the following equivalent conditions:
\begin{itemize}
 \item for all map $Z\to Y$ the base-change
 $\tilde f:X\times_Y Z\to Z$ of $f$ is closed;
$$\xymatrix{X\times_{Y} Z \ar[r] \ar[d]_{\tilde f} \ar@{}[dr]|(.7){\lefthalfcap}
& X \ar[d]^{f} \\ Z\ar[r] & Y}$$
 \item $f$ is closed and has compact fibers;
 \item $f^{-1}(K)\subset X$ is compact for all $K\subset Y$ compact; and
 \item every sequence $(x_n)\subset X$ with $f(x_n)\to y$ admits a convergent subsequence $x_{n_k}\to x$.
\end{itemize}
 
The proofs of the equivalences are rather standard, see e.g. \cite{bourbaki}. Let us remark that the first two formulations remain equivalent when working with any topological spaces, and they are equivalent to the other two only under our hypothesis.

\begin{example}
An inclusion $A\subset X$ is proper if and only if $A$ is a closed subspace of $X$. 
A projection $F\times X\to X$ is proper if and only if the fiber $F$ is compact.
\end{example}

Proper maps form a {\bf nice class} of maps, namely (1) every homeomorphism is proper, (2) composition of proper maps is proper, and (3) the base-change of a proper map is proper.

%
\bigskip

The notion of properness admits a punctual version. The map $f:X\to Y$ is {\bf proper at $y$} if any sequence $(x_n)\subset X$ such that $f(x_n)\to y$ has a convergent subsequence $x_{n_k}\to x$ (cf. \cite{dk}). Clearly $f$ is proper if and only if it is proper at every point of $Y$. 
It turns out that properness is an open condition, namely the points at which $f$ is proper is an open of $Y$.

\begin{proposition}\label{openproper}
If $f:X\to Y$ is proper at $y$ then there exists an open $y\in V$ such that $f|_V:f^{-1}(V)\to V$ is proper.
\end{proposition}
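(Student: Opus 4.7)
The plan is to use properness at $y$ together with local compactness to build a precompact "trap'' around the fiber $f^{-1}(y)$, and then confirm that the restriction to this trap is proper by the sequential criterion.

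First I would verify that the fiber $f^{-1}(y)$ is compact. Given a sequence in $f^{-1}(y)$ its image is the constant $y$, so by properness at $y$ it admits a convergent subsequence; since our spaces are locally compact Hausdorff and second countable, sequentially compact subsets are compact, so $f^{-1}(y)$ is compact. Using local compactness and Hausdorffness, I can then choose an open set $U \supset f^{-1}(y)$ whose closure $\overline{U}$ is compact.

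The heart of the argument, and the step I expect to be the main obstacle, is finding an open neighborhood $V$ of $y$ with $f^{-1}(V) \subset U$. I would argue by contradiction: if no such $V$ exists, pick a countable neighborhood basis $\{V_n\}$ of $y$ (available by second countability) and choose $x_n \in f^{-1}(V_n) \setminus U$. Then $f(x_n) \to y$, so properness at $y$ produces a subsequence $x_{n_k} \to x \in X$. By continuity $f(x)=y$, so $x \in f^{-1}(y) \subset U$. But each $x_{n_k}$ lies in the closed set $X \setminus U$, forcing $x \in X\setminus U$, a contradiction.

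Finally, with such a $V$ in hand, I would check properness of $f|_V : f^{-1}(V) \to V$ using the sequential criterion. Any sequence $(x_n) \subset f^{-1}(V)$ with $f(x_n) \to z \in V$ lies inside $U \subset \overline{U}$, which is compact, so a subsequence converges to some $x \in \overline{U} \subset X$; then $f(x) = z \in V$ gives $x \in f^{-1}(V)$, so the subsequence converges inside $f^{-1}(V)$ as required. The only subtlety here is that the candidate limit belongs to $f^{-1}(V)$ (not merely to $f^{-1}(\overline{V})$), which is immediate from $V$ being the target and $z$ already lying in it. This yields the desired open $V$.
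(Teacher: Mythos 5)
Your proof is correct and rests on the same core mechanism as the paper's: a diagonal sequence $x_n\in f^{-1}(V_n)$ escaping a compact set would have $f(x_n)\to y$ but no convergent subsequence, contradicting properness at $y$. The only (cosmetic) difference is that the paper exhausts $X$ by compacts $K_n$ all at once, whereas you first prove the fiber compact and fix a single precompact neighborhood of it — essentially anticipating the decomposition the paper itself records immediately afterwards in \ref{charpatx}.
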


\begin{proof}
Take a sequence of open subsets $(V_n)\subset Y$ such that $V_n\searrow y$, and a sequence of compact subsets $(K_n)\subset X$ such that ${\rm int}\, K_n\nearrow X$. If for some $n$ we have $f^{-1}(V_n)\subset K_n$ then $f|_{V_n}$ satisfies that preimages of compact sets are compact and hence is proper. If for all $n$ we can take $x_n\in f^{-1}(V_n)\setminus K_n$ then $f(x_n)\to y$ and $(x_n)$ has no convergent subsequence, which contradicts the hypothesis.
\end{proof}

To better understand the notion of properness at a point let us introduce the following natural definition. Given $f:X\to Y$, $y\in Y$, $F=f^{-1}(y)$, we say that $f$ satisfies the {\bf tube principle} at $y$ if for every open $U$, $F\subset U\subset X$, there exists an open $V$, $y\in V\subset Y$, such that $f^{-1}(V)\subset U$. In other words, any open containing the fiber must also contain an open tube around it. 

\begin{proposition}\label{charpatx}
A map $f:X\to Y$ is proper at $y$ if and only if the fiber $F$ is compact and $f$ satisfies the tube principle at $y$.
\end{proposition}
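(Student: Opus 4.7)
The plan is to prove both implications by exploiting the standing hypotheses that our spaces are second countable, locally compact and Hausdorff. These make sequential compactness equivalent to compactness, allow us to pick a countable neighborhood basis at $y$, and guarantee that $F$ can be thickened to a relatively compact open neighborhood.

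For the forward direction, suppose $f$ is proper at $y$. To see $F=f^{-1}(y)$ is compact, I take any sequence $(x_n)\subset F$; since $f(x_n)=y$ converges trivially to $y$, properness at $y$ produces a convergent subsequence $x_{n_k}\to x$, and continuity together with Hausdorffness of $Y$ forces $x\in F$, so $F$ is sequentially compact and hence compact. For the tube principle, I argue by contradiction: assume some open $U\supset F$ admits no neighborhood $V$ of $y$ with $f^{-1}(V)\subset U$. Fix a countable basis $V_n\searrow y$ and for each $n$ pick $x_n\in f^{-1}(V_n)\setminus U$. Then $f(x_n)\to y$, so properness at $y$ yields a subsequence $x_{n_k}\to x$. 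Continuity of $f$ gives $f(x)=y$, so $x\in F\subset U$; but $x_n$ lies in the closed set $X\setminus U$, so $x\in X\setminus U$, a contradiction.

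For the reverse direction, assume $F$ is compact and the tube principle holds at $y$. Using local compactness, I cover $F$ by finitely many relatively compact opens whose union $U$ therefore has compact closure $\overline{U}$. The tube principle then furnishes an open $V\ni y$ with $f^{-1}(V)\subset U\subset \overline{U}$. Given any sequence $(x_n)$ with $f(x_n)\to y$, the tail of $(x_n)$ lies in $f^{-1}(V)\subset \overline{U}$, and compactness of $\overline{U}$ yields a convergent subsequence.

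The main obstacle, such as it is, is bookkeeping in the forward direction: one must align the choice of countable basis $V_n$ with the sequence $(x_n)$ so that the subsequential limit delivered by properness at $y$ truly contradicts the assumption $x_n\notin U$, and for this the only real input is that $X\setminus U$ is closed together with $F\subset U$. The backward direction is essentially a one-liner once the relatively compact tube around $F$ has been produced, and it is the argument underlying Proposition \ref{openproper}.
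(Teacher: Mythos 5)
Your proof is correct, and both halves run along different lines than the paper's. For the forward direction the paper first invokes Proposition \ref{openproper} to shrink to a tube on which $f$ is globally proper, and then produces the tube for a given open $U\supset F$ purely topologically, as $V=Y\setminus f(X\setminus U)$, using that proper maps are closed; you instead give a self-contained sequential argument (countable neighborhood basis $V_n\searrow y$, points $x_n\in f^{-1}(V_n)\setminus U$, and the closedness of $X\setminus U$ to contradict the subsequential limit lying in $F$). Your version avoids the dependence on \ref{openproper} at the cost of being slightly longer; the paper's is shorter but leans on the earlier proposition and on the fact that proper implies closed. For the converse the paper argues by contradiction: a sequence with $f(x_n)\to y$ and no convergent subsequence eventually leaves the compact $F$, and after discarding finitely many terms the set $\{x_n\}$ is closed and discrete, so $U=X\setminus\{x_n\}$ is an open around $F$ containing no tube, violating the tube principle. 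You argue directly: local compactness lets you thicken $F$ to an open $U$ with compact closure, the tube principle traps the tail of any sequence with $f(x_n)\to y$ inside $\overline{U}$, and compactness (equivalently sequential compactness under the standing second countable locally compact Hausdorff hypotheses) extracts the subsequence. Your direct route makes the role of local compactness explicit and is essentially the mechanism behind \ref{openproper} itself, as you note; the paper's contradiction argument is a touch slicker in that it needs no thickening of $F$, only the observation that a subsequence-free sequence is closed and discrete. Both are complete proofs.
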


\begin{proof}
If $f:X\to Y$ is proper at $y$ then we have seen that there is a tube on which $f$ is proper. Then we can assume that $f$ is proper, and therefore closed. Now, if $U$ is an open containing the fiber $F$, we can take $V=Y\setminus(f(X\setminus U))$. 

Conversely, suppose that $(x_n)$ is such that $f(x_n)\to y$ but $(x_n)$ has no convergent subsequence. If $F$ is compact, then $x_n$ will belong to $F$ at most finitely many times.
Dropping the first terms of the sequence we can assume that $x_n\notin F$ for any $n$, and then $U=X\setminus\{x_n\}$ is an open around $F$ which does not contain any tube.
\end{proof}

\begin{example}
Next examples show the necessity of the two conditions.
The projection $S^1\setminus\{i\} \to \R$, $\exp(it)\mapsto\cos(t)$, has compact fiber at $0$ but it does not satisfy the tube principle. The smooth map $\phi:\R\to\R$, $\phi(x)=0$ for $x\leq 0$, $\phi(x)=\exp(-1/x)$ for $x>0$, satisfies the tube principle at $0$ but its fiber is not compact.
\end{example}


\subsection{Good pullbacks of manifolds}\label{goodpull}

The pullback of two maps in the category of smooth manifolds, if it exists, may behave badly with respect to the underlying topologies and also to the construction of tangent spaces.
Let us illustrate this with examples.

\begin{example}
The next square is a pullback of manifolds, but the induced diagram between the tangent spaces at $0$ is not a pullback.
$$\begin{matrix}\xymatrix{0 \ar[r] \ar[d] \ar@{}[dr]|(.7){\lefthalfcap}& \R \ar[d]^g \\ \R\ar[r]^f & \R^2}\end{matrix}
\qquad \qquad \begin{matrix}
        f(t)=(t,0) \\ g(t)=(t,t^2)
       \end{matrix}$$
The intersection between the two curves is something more than the point, it contains some extra infinitesimal data.
\end{example}

\begin{example}
Let $\alpha$ be irrational and let $D\subset\R\times\R$ be the set-theoretic pullback 
$$\begin{matrix}
\xymatrix{D \ar[r] \ar[d] \ar@{}[dr]|(.7){\lefthalfcap}& \R \ar[d]^g \\ \R\ar[r]^f & S^1\times S^1}\end{matrix}
\qquad \qquad \begin{matrix}
        f(t)=(e^{i\alpha t},e^{it}) \\ g(t)=(e^{it},e^{i\alpha t})
       \end{matrix}$$
viewed as a discrete manifold. The square is not a topological pullback, for the intersection of the two dense curves on the torus has a non-trivial topology. However, it is a pullback of manifolds, for a smooth map $M\to S^1\times S^1$ whose image lies in the intersection of the two curves has to be constant.
\end{example}

We say that a pullback of smooth manifolds is a {\bf good pullback} if:
$$\xymatrix{M_1\times_{M} M_2 \ar[r]^{\tilde f_1} \ar[d]_{\tilde f_2} \ar@{}[dr]|(.7){\lefthalfcap}
& M_2 \ar[d]^{f_2} \\ M_1\ar[r]_{f_1} & M}$$
\begin{enumerate}
\item It is a pullback of the underlying topological spaces, and
\item It induces pullbacks between the tangent spaces, say for each $x_1,x_2,x$ such that $f_1(x_1)=x=f_2(x_2)$ the following sequence is exact.
$$0 \to T_{(x_1,x_2)}(M_1\times_M M_2) \to T_{x_1}M_1\times T_{x_2}M_2 \to T_xM$$
The last arrow is given by $(v,w)\mapsto d_{x_1}f_1(v)-d_{x_2}f_2(w)$.
\end{enumerate}
In other words, the pullback is good if the map $M_1\times_M M_2 \to M_1\times M_2$ is a closed embedding with the expected tangent space.

The standard criterion for the existence of pullbacks is by means of transversality. Recall that two smooth maps $f_1:M_1\to M$, $f_2:M_2\to M$ are {\bf transverse} if $d_{x_1}f_1(T_{x_1}M_1)+d_{x_2}f_2(T_{x_2}M_2)=T_xM$ for all $x_1,x_2,x$ making sense.

\begin{proposition}\label{pullback of manifolds}
If $f_1:M_1\to M$ and $f_2:M_2\to M$ are transverse, then their pullback $M_1\times_M M_2$ exists and it is a good pullback.
\end{proposition}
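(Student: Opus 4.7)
The plan is to reduce the statement to the classical preimage theorem applied to the diagonal. Concretely, I would form the product map
$$\Phi = (f_1,f_2)\colon M_1\times M_2 \longrightarrow M\times M$$
and observe that set-theoretically $M_1\times_M M_2 = \Phi^{-1}(\Delta_M)$, where $\Delta_M\subset M\times M$ is the diagonal. Since $\Delta_M$ is a closed embedded submanifold and the existence/regularity of preimages under transverse maps is standard, everything reduces to showing that $\Phi$ is transverse to $\Delta_M$.

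Next I would carry out the linear algebra translating the two notions of transversality. At a point $(x_1,x_2)$ with $f_1(x_1)=x=f_2(x_2)$, transversality of $\Phi$ to $\Delta_M$ asks that
$$d\Phi\bigl(T_{x_1}M_1\times T_{x_2}M_2\bigr) + T_{(x,x)}\Delta_M = T_xM\times T_xM.$$
Using $T_{(x,x)}\Delta_M=\{(v,v):v\in T_xM\}$ and rewriting, this is equivalent to the condition that every $u-w\in T_xM$ lies in $d_{x_1}f_1(T_{x_1}M_1)-d_{x_2}f_2(T_{x_2}M_2)$, which is exactly the transversality of $f_1$ and $f_2$. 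So the hypothesis gives $\Phi\pitchfork\Delta_M$.

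Then the preimage theorem yields that $P:=\Phi^{-1}(\Delta_M)$ is a closed embedded submanifold of $M_1\times M_2$, with tangent space the preimage of $T_{(x,x)}\Delta_M$ under $d\Phi$. Unwinding the kernel description gives
$$T_{(x_1,x_2)}P = \bigl\{(v_1,v_2)\in T_{x_1}M_1\times T_{x_2}M_2 : d_{x_1}f_1(v_1)=d_{x_2}f_2(v_2)\bigr\},$$
which is exactly the kernel of $(v,w)\mapsto d_{x_1}f_1(v)-d_{x_2}f_2(w)$, verifying condition (2) of the definition of good pullback. For condition (1), $P$ carries the subspace topology from $M_1\times M_2$ (it is a closed embedded submanifold), and this coincides with the topological pullback, since the latter is just the subspace of pairs mapping to the diagonal under $\Phi$.

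Finally I would check the universal property: given smooth $g_1\colon N\to M_1$ and $g_2\colon N\to M_2$ with $f_1g_1=f_2g_2$, the induced map $N\to M_1\times M_2$ lands in $P$, and smoothness follows from the fact that $P\hookrightarrow M_1\times M_2$ is a closed embedding. The only step requiring any care is the translation of transversalities in the second paragraph; everything else is bookkeeping around the preimage theorem, so I do not expect a genuine obstacle.
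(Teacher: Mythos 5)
Your proof is correct and complete: the reduction of the fibered product to $\Phi^{-1}(\Delta_M)$ with $\Phi=(f_1,f_2)$, the linear-algebra check that $f_1\transverse f_2$ is equivalent to $\Phi\transverse\Delta_M$, and the preimage theorem give exactly the closed embedding of $M_1\times_M M_2$ into $M_1\times M_2$ with the expected tangent spaces, which is the definition of a good pullback. The paper itself omits the argument and simply cites Lang, and your diagonal-plus-preimage-theorem route is precisely the standard proof delegated to that reference, so there is nothing to add.
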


For a proof see e.g. \cite{lang}.

%
%

\begin{remark}\label{pullbacks&submersions}
A submersion is transverse to any other map, thus the pullback between a submersion and any other map always exists and it is good.

The base-change of a submersion is always a submersion. As a partial converse, with the notations above, if $\tilde f_1$ is a submersion and $f_2$ is a surjective submersion, then $f_1$ has to be a submersion as well, as it follows from the induced squares of tangent vector spaces.
\end{remark}


\subsection{Quotients of manifolds}

Given $M$ a smooth manifold and $R\subset M\times M$ an equivalence relation on it, it is natural to ask whether if the quotient set $M/R$ can be regarded as a new manifold.
Of course, this is not the case in general.

\begin{example}
The action by rotations on the plane $S^1\action\R^2$ leads to a quotient $\R^2/S^1$ which is not a manifold for it is not locally euclidean. We do not allow manifolds to have border.
\end{example}

\begin{example}
The foliation $F$ on $\R^2\setminus\{(0,0)\}$ by horizontal lines defines an equivalence relation on which the quotient $(\R^2\setminus\{(0,0)\})/F$ is locally euclidean, but it is not Hausdorff.
\end{example}

Given $M$ and $R$, if the quotient $M/R$ admits a manifold structure and the projection $M\to M/R$ is a submersion, then the following square happens to be a good pullback.
$$\xymatrix{R \ar[r] \ar[d] \ar@{}[dr]|(.7){\lefthalfcap} & M \ar[d] \\ M\ar[r] & M/R}$$
Then $R\subset M\times M$ has to be a closed embedded submanifold and the projections $\pi_1|_R,\pi_2|_R:R\to M$ have to be submersions.
It turns out that these conditions are sufficient to define a manifold structure on the quotient.

\begin{proposition}[Godement criterion]\label{godement}
If an equivalence relation $R\subset M\times M$ is a closed embedded submanifold and $\pi_1|_R,\pi_2|_R:R\to M$ are submersions, then $M/R$ inherits a unique manifold structure that makes the projection $M\to M/R$ a submersion.
\end{proposition}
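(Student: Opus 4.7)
The proof splits naturally into a uniqueness part, a topology part, and a smooth-chart part. For uniqueness, observe that once $q\colon M\to M/R$ is a smooth submersion, it is surjective, so the smooth structure on $M/R$ is forced by the universal property: $f\colon M/R\to N$ is smooth if and only if $f\circ q$ is smooth. Any two manifold structures on $M/R$ with this property must be the same.

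For the topology I would first show that $q$ is an \emph{open} map. For $U\subset M$ open, $q^{-1}(q(U))=\pi_2(\pi_1^{-1}(U)\cap R)$, which is open because $\pi_1|_R$ is continuous and $\pi_2|_R$ is a submersion, hence an open map. Since $q$ is a quotient, $q(U)$ is open. Consequently $q\times q$ is an open quotient map, so $\Delta_{M/R}\subset M/R\times M/R$ is closed iff its preimage $R$ is closed in $M\times M$; this gives the Hausdorff property. Second countability and local compactness of $M/R$ follow from the corresponding properties of $M$ together with the openness of $q$.

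The key point is the construction of the atlas. Around $x\in M$, pick a slice $S\subset M$ through $x$ with $T_x S$ complementary to the orbit direction $T_x[x]$. Using that $R\supset \Delta_M$ and that both projections are submersions, one computes
\[ T_{(x,x)}R=\{(v,v+u):v\in T_xM,\ u\in T_x[x]\}\subset T_xM\times T_xM, \]
and from this deduces: (i) $R$ and $S\times S$ are transverse at $(x,x)$, so $R\cap(S\times S)$ is a submanifold of $M\times M$ of dimension $\dim S=2\dim M-\dim R$; (ii) its tangent space at $(x,x)$ equals $T_{(x,x)}\Delta_S$. Since $\Delta_S\subset R\cap(S\times S)$ and the two have the same dimension, shrinking $S$ I may assume $R\cap(S\times S)=\Delta_S$. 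This gives the local injectivity of $q|_S$. To see that $q|_S$ hits an open set, note that $\phi\colon\pi_2^{-1}(S)\to M$, $(a,b)\mapsto a$, is a local diffeomorphism at $(x,x)$ by another differential computation (its differential is the restriction of $d\pi_1$ to a subspace of $T_{(x,x)}R$ that maps isomorphically onto $T_xM$); composing with $\pi_2$ produces a smooth retraction $r\colon U\to r(U)\subset S$ with $q\circ r=q|_U$. Thus $q(S)\supset q(U)$ is open, and $q|_{r(U)}\colon r(U)\to q(U)$ is a continuous bijection whose inverse is continuous since $q$ is open. I declare such slices to be charts.

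Finally, I would verify compatibility of charts and smoothness of $q$. Given two such slices $S_1,S_2$, the transition map sends $s_1\in S_1$ to the unique $s_2\in S_2$ with $s_1\sim s_2$; it is precisely $\pi_2\circ\pi_1^{-1}$ on the submanifold $\pi_1^{-1}(S_1)\cap\pi_2^{-1}(S_2)\subset R$, which by the same transversality arguments projects diffeomorphically onto open subsets of $S_1$ and $S_2$, so the transition is smooth. Read in these charts, $q$ is the smooth submersion $r$, which shows $q$ is a submersion, finishing the proof. The main obstacle is the transversality/dimension computation at $(x,x)$ showing $R\cap(S\times S)=\Delta_S$ locally; once the structure of $T_{(x,x)}R$ is understood, everything else is routine.
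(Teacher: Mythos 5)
Your proposal is correct, but note that the paper does not actually prove the Godement criterion: it proves only the uniqueness statement (by the same observation you make, that a surjective submersion $M\to M/R$ is open and admits local sections, so smoothness of maps out of $M/R$ is detected on $M$) and defers the existence of the manifold structure to the references \cite{licogeom} and \cite{serrelie}. What you supply is essentially the classical slice construction from those sources: the computation $T_{(x,x)}R=\{(v,v+u):v\in T_xM,\ u\in T_x[x]\}$ with $T_x[x]=\ker d_{(x,x)}(\pi_1|_R)$ is right (inclusion $\supseteq$ plus a dimension count), the transversality of $R$ with $S\times S$ and the identification $R\cap(S\times S)=\Delta_S$ after shrinking are the key steps, and the retraction $r=\pi_2\circ\phi^{-1}$ correctly exhibits $q$ as a submersion in the charts. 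The only place where your sketch glosses over a genuine (if standard) point is the chart-compatibility step: to get transversality of $R$ and $S_1\times S_2$ at an \emph{off-diagonal} point $(s_1,s_2)\in R$ you need to know that $\ker d_{(s_1,s_2)}(\pi_2|_R)\subset T_{s_1}M\times 0$ projects onto the same subspace $T_{s_1}[s_1]$ that you defined using the diagonal point $(s_1,s_1)$; this follows from the transitivity of $R$ (the smooth "translation" maps between the fibers of $\pi_2|_R$, the analogue of the bisection argument the paper uses for groupoids), and is worth a sentence. You should also record that transversality of a slice to the classes is an open condition, so it persists on a shrunken slice, which you implicitly use. With those two remarks your argument is complete and consistent with the approach of the cited references.
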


Note that if one of the projections is a submersion then so is the other. For the construction of such a manifold structure on $M/R$ we refer to \cite{licogeom}, see also \cite{serrelie}.

Given $M$ and $R\subset M\times M$ as in \ref{godement}, we can identify the smooth maps $M/R\to Z$ with those maps $M\to Z$ which are constant over the classes defined by $R$.
$$C^\infty(M/R,Z)\cong C_R^\infty(M,Z)\subset C^\infty(M,Z)$$
In fact, since $M\to M/R$ is a surjective submersion then (1) it is open and hence a topological quotient, and (2) it admits local sections, hence a continuous map $M/R\to Z$ is smooth if and only if the composition $M\to M/R\to Z$ is so.
This proves in particular the uniqueness in \ref{godement}.

\bigskip

The following corollary is probably better known than the criterion itself.
Let $G$ be a Lie group and consider an action $G\action M$, $(g,x)\mapsto g\cdot x$, over a manifold $M$. Recall that the action is {\bf proper} if the map $G\times M\to M\times M$, $(g,x)\mapsto(g\cdot x,x)$ is proper. 

\begin{corollary}\label{quotgrou}
If $G\action M$ is a free proper action of a Lie group on a manifold, then the quotient $M/G$ inherits a unique manifold structure that makes the projection $M\to M/G$ a submersion.
\end{corollary}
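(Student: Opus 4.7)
The plan is to apply Godement's criterion (Proposition~\ref{godement}) to the orbit equivalence relation
\[
R = \{(g\cdot x,\, x) : g\in G,\ x\in M\}\subset M\times M,
\]
via the smooth map $\phi\colon G\times M\to M\times M$, $\phi(g,x)=(g\cdot x,x)$, whose image is precisely $R$. By definition, the action being proper says exactly that $\phi$ is a proper map, and the action being free says exactly that $\phi$ is injective.

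The main technical step is to show that $\phi$ is also an immersion. Its differential at $(g,x)$ sends $(A,v)\in T_gG\times T_xM$ to a pair whose second component is $v$; so any kernel vector must have $v=0$, and then the first component reduces to $d_g\rho^x(A)$, where $\rho^x\colon G\to M$, $h\mapsto h\cdot x$, is the orbit map. Since the action is free, every stabilizer is trivial, hence has trivial Lie algebra; as $\ker d_g\rho^x$ agrees, after right translation, with the Lie algebra of the stabilizer $G_{g\cdot x}=\{e\}$, we conclude $A=0$. Thus $\phi$ is an injective proper immersion, and hence a closed embedding, so $R$ is a closed embedded submanifold of $M\times M$ and $\phi$ restricts to a diffeomorphism $G\times M\xto{\sim} R$.

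It remains to verify that the two projections $\pi_i|_R\colon R\to M$ are submersions. Under the diffeomorphism $\phi$ they correspond respectively to the action map $G\times M\to M$ and to $\pr_2\colon G\times M\to M$, both of which are submersions; for the action map, already the partial derivative in the $M$-direction is an isomorphism, since $x\mapsto g\cdot x$ is a diffeomorphism for each fixed $g$. With both hypotheses of Proposition~\ref{godement} in force, $M/G = M/R$ inherits a unique manifold structure making $M\to M/G$ a submersion.

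The delicate step is the immersion claim for $\phi$, which requires upgrading freeness from the subgroup level to the Lie algebra level; without it, $R$ would pick up extra infinitesimal structure and the tangent-space condition in the Godement criterion would fail.
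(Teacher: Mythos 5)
Your argument is correct and follows essentially the same route as the paper: show that $(g,x)\mapsto(g\cdot x,x)$ is an injective proper immersion (hence a closed embedding onto $R$), check the projections are submersions, and invoke Godement's criterion. The one fact you assert without proof, that $\ker d_g\rho^x$ is the right-translate of the Lie algebra of the (trivial) stabilizer, is exactly the one-parameter-subgroup observation the paper's sketch also relies on, so the two proofs match step for step.
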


\begin{proof}[Sketch of proof]
Given $x\in M$, consider the map $G \to M$, $g\mapsto g.x$.
If $v\in T_1G $ is a nonzero vector in the kernel of its differential, then the 1-parameter group it generates is included in the isotropy of $x$. This proves that there is no such a $v$.
The same argument shows that $G \times M\to M\times M$, $(g,x)\mapsto (gx,x)$ is an injective immersion. Since it is also proper it turns out to be a closed embedding. The composition $G \times M \to M\times M \xto{\pi_2} M$ is clearly a submersion and we can apply Godement criterion \ref{godement}. \end{proof}

For alternative approaches see \cite{dk} and \cite{sharpe}.


\subsection{The structure of submersions}

The constant rank theorem implies that, in a neighborhood of a point, a submersion looks as a projection . When moving along the fiber this leads to the following description of the structure of submersions.

\begin{proposition}\label{strusubm}
Let $f:M\to N$ be a submersion, $y\in N$, $F=f^{-1}(y)$.
There are opens $U\supset F$ and $V\ni y$, and an open embedding $i_U:U\to F\times V$ extending the obvious inclusion $F\to F\times y$ and satisfying $f|_U=\pr \circ i_U$.
$$\begin{matrix}
M & \supset & U & \xto{i_U} & F\times V \\
\downarrow & & \downarrow & & \downarrow\\
N & \supset & V & = & V
\end{matrix}$$
\end{proposition}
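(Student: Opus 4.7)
The plan is to build a smooth parametrization of a neighborhood of $F$ in $M$ by horizontal path-lifting from $y$, verify it is a local diffeomorphism along $F\times\{y\}$, and then shrink it to a genuine open embedding whose inverse serves as $i_U$.

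I would start by fixing any Riemannian metric on $M$ to obtain a horizontal distribution $H\subset TM$ complementary to the vertical subbundle $\ker df$, and by picking a chart $\psi\colon V_0\to\R^d$ around $y$ with $\psi(y)=0$ and $\psi(V_0)$ star-shaped at the origin. For each $(x,q)\in F\times V_0$, I would lift the affine path $t\mapsto\psi^{-1}(t\psi(q))$, $t\in[0,1]$, horizontally to a curve $\tilde\gamma_{x,q}$ in $M$ starting at $x$. By smooth dependence of ODEs on initial conditions and parameters, the set $D\subset F\times V_0$ of pairs $(x,q)$ for which $\tilde\gamma_{x,q}$ is defined up to $t=1$ is open and contains $F\times\{y\}$, and $\Psi(x,q):=\tilde\gamma_{x,q}(1)$ is smooth on $D$. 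Horizontality gives $f\circ\Psi=\pr_{V_0}$, and $\Psi(x,y)=x$ because the lift is constant when $q=y$. At each $(x,y)$ the differential $d\Psi$ is the identity on the $T_xF$-factor and maps $T_yN$ isomorphically onto $H_x$, so by the splitting $T_xM=T_xF\oplus H_x$ it is an isomorphism; hence $\Psi$ is a local diffeomorphism at every point of $F\times\{y\}$.

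The main obstacle is the final step: shrinking $D$ to a smaller open $D'\supset F\times\{y\}$ on which $\Psi$ is injective. Since $F$ need not be compact, no uniform tube lemma applies, and one must proceed through paracompactness. The key observation is that the identity $f\circ\Psi=\pr_{V_0}$ forces any collision $\Psi(x_1,q_1)=\Psi(x_2,q_2)$ to lie in a single fiber $q_1=q_2$, so it is enough to shrink in the $V_0$-direction by a continuous amount depending on the position in $F$. Covering $F$ by product opens $F_x\times V_x\subset D$ on which $\Psi$ is a diffeomorphism, passing to a locally finite refinement of $\{F_x\}$ and using a subordinate partition of unity, I would produce a continuous $\epsilon\colon F\to\R_{>0}$ such that $D':=\{(x,q)\in D:|\psi(q)|<\epsilon(x)\}$ is injected by $\Psi$. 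Finally, setting $U:=\Psi(D')$, which is open in $M$ because $\Psi$ is a local diffeomorphism, and choosing $V\subset V_0$ any open around $y$ with $D'\subset F\times V$, the inverse $i_U:=\Psi^{-1}\colon U\to F\times V$ is the desired open embedding: it restricts to the inclusion $F\hookrightarrow F\times\{y\}$ and satisfies $f|_U=\pr\circ i_U$.
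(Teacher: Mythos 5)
Your construction via an Ehresmann connection -- a horizontal distribution plus lifting of radial paths in a chart of $N$ -- is sound, and it is a close cousin of the paper's argument, which instead chooses metrics making $f$ a Riemannian submersion and uses the exponential map on the normal bundle of $F$: for such metrics the horizontal geodesic through $x$ is exactly the horizontal lift of a radial geodesic of $N$, so the two parametrizations essentially coincide. Everything up to and including the local-diffeomorphism statement along $F\times\{y\}$ is correct.

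The weak point is the injectivity step, which you rightly identify as the crux but do not actually close. Knowing that $\Psi$ is injective on each piece $F_i\times V_i$ of a locally finite cover does not exclude a collision $\Psi(x_1,q)=\Psi(x_2,q)$ with $x_1$ and $x_2$ in \emph{different} pieces: the images $\Psi(F_i\times V_i)$ and $\Psi(F_j\times V_j)$ may overlap, and nothing in your argument separates them. Making the partition-of-unity shrinking work requires an extra separation argument (using that $F$ is closed in $M$ and that $\Psi(\cdot,q)\to \id_F$ locally uniformly as $q\to y$); this is precisely the ``standard metric argument'' the paper itself leaves implicit. Fortunately, in your construction the whole difficulty evaporates: by your own observation a collision forces $q_1=q_2=q$, and then $\tilde\gamma_{x_1,q}$ and $\tilde\gamma_{x_2,q}$ are two horizontal lifts of the \emph{same} path $\gamma_q$ with the same endpoint, hence coincide by backward uniqueness of the lifting ODE, giving $x_1=x_2$. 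So $\Psi$ is injective on all of $D$ -- indeed $\Psi(\cdot,q)$ is a flow map, so $d\Psi$ is invertible everywhere on $D$ -- and no shrinking is needed at all: take $U=\Psi(D)$, $V=V_0$ and $i_U=\Psi^{-1}$.
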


Note that this is a local statement around $y$, thus we may change $V$ by any smaller neighborhood. In particular we may take $V\cong\R^n$ a ball-like open and this way compare $f$ with the projection $F\times\R^n\to\R^n$.

Proposition \ref{strusubm} admits rather elementary proofs. We propose the following using Riemannian geometry, maybe more sophisticated, but with interesting generalizations (see the proof of \ref{linearization}, and also \cite{riemannian}).

\begin{proof}[Sketch of proof]
%
We can assume $N=\R^n$.
Endow $M$ and $N$ with Riemannian metrics for which $f$ is a Riemannian submersion, that is, such that $df_x|_{({\ker df_x})^\bot}$ is an isometry for all $x$.
Such metrics can be easily constructed.

The normal bundle $NF$ of the fiber $F$ is trivial. We identify it with the vectors orthogonal to $TF$. The geodesics associated to these vectors are preserved by $f$, thus the exponential maps of the metrics yield a commutative diagram
$$\begin{matrix}F\times\R^n & \cong & NF & \xto{\exp} & M \\
\downarrow & & \downarrow & & \downarrow \\
\R^n & \cong & T_yN & \xto{\exp} & N\end{matrix}$$
Since the map $F\times\R^n\cong NF\xto\exp M$ is injective over $F\times 0$ and its differential is invertible over $F\times 0$, it follows from a standard metric argument that it is still injective in an open around $F\times 0$, hence an open embedding, and we can take $i_U$ as its inverse.
\end{proof}

As a straightforward application we obtain the well-known Ehresmann Theorem.

\begin{proposition}[Ehresmann Theorem]\label{etheorem}
Let $f:M\to N$ be a submersion, $y\in N$, $F=f^{-1}(y)$. The following are equivalent:
\begin{enumerate}
\item $f$ is locally trivial at $y$ and $F$ is compact;
\item $f$ is proper at $y$; and
\item $f$ satisfies the tube principle at $y$.
\end{enumerate}
\end{proposition}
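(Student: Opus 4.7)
My plan is to prove the cycle $(1)\Rightarrow(2)\Rightarrow(3)\Rightarrow(1)$, with the non-trivial content concentrated in the last arrow.

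For $(1)\Rightarrow(2)$, local triviality gives a neighborhood $V$ of $y$ and a diffeomorphism $f^{-1}(V)\cong F\times V$ identifying $f|_{f^{-1}(V)}$ with the projection. Since the first factor $F$ is compact, the projection is proper, so $f|_{f^{-1}(V)}$ is proper, and in particular $f$ is proper at $y$. The implication $(2)\Rightarrow(3)$ is contained in Proposition \ref{charpatx}, which moreover records that properness at $y$ also forces $F$ to be compact.

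For $(3)\Rightarrow(1)$, I would first invoke Proposition \ref{strusubm} to obtain an open $U\supset F$, an open $V\ni y$, and an open embedding $i_U\colon U\to F\times V$ with $f|_U=\pr\circ i_U$. By the tube principle at $y$ applied to the open $U$, we can shrink $V$ to $V'\subset V$ so that $f^{-1}(V')\subset U$, giving an open embedding $i_U'\colon f^{-1}(V')\hookrightarrow F\times V'$ whose image is open and contains $F\times\{y\}$. Assuming for now that $F$ is compact, for each $x\in F$ choose a product neighborhood $W_x\times V_x\subset i_U'(f^{-1}(V'))$ of $(x,y)$; a finite subcover $W_{x_1},\dots,W_{x_k}$ of $F$ together with $V'':=\bigcap_i V_{x_i}$ yields $F\times V''\subset i_U'(f^{-1}(V''))$, and since $i_U'$ maps into the $V''$-slab exactly over $f^{-1}(V'')$, the embedding restricts to a diffeomorphism $f^{-1}(V'')\cong F\times V''$, which is the desired local trivialization.

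It remains to show that (3) forces $F$ to be compact, which I expect to be the main obstacle (note the example in the text showing the tube principle alone is not enough, so the submersion hypothesis must be used). I would argue by contradiction: assume $(x_n)\subset F$ has no convergent subsequence. Fix a metric $d$ inducing the topology of $M$. Using the local product picture from Proposition \ref{strusubm} around each $x_n$, pick $z_n$ in a small chart at $x_n$ with $z_n\notin F$, $f(z_n)\to y$, and $d(x_n,z_n)<1/n$. Any convergent subsequence $z_{n_k}\to z$ would force $x_{n_k}\to z$ as well, contradicting the choice of $(x_n)$; hence $\{z_n\}$ is a closed discrete subset of $M$ disjoint from $F$. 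Then $U:=M\setminus\{z_n\}$ is an open neighborhood of $F$, but since $f(z_n)\to y$, every open $V\ni y$ contains $f(z_n)$ for large $n$, so $f^{-1}(V)\not\subset U$, violating the tube principle at $y$. This contradiction yields $F$ compact and closes the argument.
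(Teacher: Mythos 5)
Your proof is correct and follows essentially the same route as the paper: both hinge on Proposition \ref{strusubm}, both extract compactness of $F$ from the tube principle via a sequence of points drifting off the fiber whose complement is an open around $F$ admitting no tube, and both obtain the local trivialization by shrinking the tube (your finite-subcover argument is the classical tube lemma, which the paper packages instead as properness of the projection $F\times V\to V$). The only real difference is organizational: you prove $(3)\Rightarrow(1)$ directly, whereas the paper passes through $(3)\Rightarrow(2)\Rightarrow(1)$.
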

\begin{proof}
The implications (1) $\then$ (2) $\then$ (3) are obvious (cf. \ref{charpatx}).

To prove (3) $\then$ (2) we need to show that the fiber is compact (cf. \ref{charpatx}). Because of \ref{strusubm} it is enough to study the case $U\subset F\times \R^n\to \R^n$. Given $(x_n)\subset F$, we will see that it admits a convergent subsequence. If not, we can take $v_n\in\R^n$ such that $0<||v_n||<1/n$ and $(x_n,v_n)\in U$, then $U\setminus\{(x_n,v_n)\}_n$ is an open around $F$ not containing any tube, which contradicts (3). 

Finally, assume (2) and let us prove (1). Since $f$ is proper at $y$, the fiber $F$ is compact, and $f$ satisfies the tube principle at $y$. Then, in \ref{strusubm}, by shrinking $U$, we can assume it is saturated. Since the projection $F\times V\to V$ is also proper we can shrink $U$ again so as to make $i_U(U)$ saturated, and $U$ will remain saturated as well, proving local triviality.
\end{proof}

Ehresmann theorem admits an interesting particular case on which the hypothesis rely only on the topology of the fibers.

\begin{corollary}\label{ehreconn}
Let $f:M\to N$ be a submersion, $y\in N$, $F=f^{-1}(y)$. If $F$ is compact and the nearby fibers are connected then $f$ is proper at $y$.
\end{corollary}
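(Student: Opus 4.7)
My plan is to reduce to the tube principle at $y$ and then exploit connectedness of the nearby fibers to force every such fiber to lie inside a prescribed neighborhood of $F$.

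By Ehresmann's theorem (Proposition \ref{etheorem}), since $F$ is compact, proving properness at $y$ is equivalent to verifying the tube principle at $y$. So let $\tilde U\subset M$ be an arbitrary open containing $F$; I want to exhibit an open $V^*\ni y$ with $f^{-1}(V^*)\subset\tilde U$.

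Apply Proposition \ref{strusubm} to produce opens $F\subset U\subset M$, $y\in V\subset N$, and an open embedding $i_U:U\to F\times V$ extending the inclusion of $F$ and satisfying $f|_U=\pr\circ i_U$. Since $F$ is compact and $i_U(U)$ is an open of $F\times V$ containing $F\times\{y\}$, there is an open $V'\ni y$ in $V$ with $F\times V'\subset i_U(U)$. For any $z\in V'$ one then has $(f|_U)^{-1}(z)=i_U^{-1}(F\times\{z\})\cong F$; in particular this set is nonempty and \emph{compact}, hence closed in the ambient fiber $f^{-1}(z)$. It is also open in $f^{-1}(z)$ because $U$ is open in $M$.

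Now use the hypothesis: there is a neighborhood $W\ni y$, which we may take inside $V'$, such that $f^{-1}(z)$ is connected for every $z\in W$. For such a $z$, the subset $f^{-1}(z)\cap U$ is simultaneously open, closed, and nonempty in the connected space $f^{-1}(z)$, so $f^{-1}(z)\subset U$ and therefore $f^{-1}(z)=i_U^{-1}(F\times\{z\})$.

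Finally, to accommodate $\tilde U$: since $i_U^{-1}(F\times\{y\})=F\subset\tilde U\cap U$ and $F$ is compact, a further shrinking of $V'$ yields $V^*\ni y$ with $i_U^{-1}(F\times V^*)\subset\tilde U\cap U$ and $V^*\subset W$. For $z\in V^*$ the preceding paragraph gives $f^{-1}(z)=i_U^{-1}(F\times\{z\})\subset\tilde U$, so $f^{-1}(V^*)\subset\tilde U$, as required. The only subtle step is the closedness of $f^{-1}(z)\cap U$ in $f^{-1}(z)$; this is precisely where compactness of $F$ enters a second time (through the local triviality of $f|_U$ over $V'$), and is the pivot that lets connectedness do the rest of the work.
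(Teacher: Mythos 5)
Your proof is correct and follows essentially the same route as the paper's: Proposition \ref{strusubm} plus the tube lemma for $F\times V\to V$ (using compactness of $F$) to make the image of $U$ saturated over a smaller $V'$, and then the observation that $f^{-1}(z)\cap U$ is nonempty, open and closed in the connected fiber $f^{-1}(z)$, forcing $f^{-1}(z)\subset U$. The only cosmetic difference is that you verify the tube principle directly for an arbitrary open $\tilde U$ and invoke \ref{charpatx}, whereas the paper concludes that $U$ itself is saturated and reads off properness from the resulting local triviality; the content is the same.
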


\begin{proof}
Let $U,V$ and $i_U:U\to F\times V$ be as in \ref{strusubm}.
Since $F$ is compact the projection $F\times V\to V$ is proper, and by the tube principle we may shrink $U$ so as to make $i_U(U)$ saturated. We may suppose that $U$ intersects only connected fibers $F'$. For each of these fibers $F'$ we have that $U\cap F'\cong i_U(U\cap F')\cong F$ is compact, hence closed and open on $F'$. This proves $F'\subset U$ and that $U$ is saturated as well.
\end{proof}


\section{Lie groupoids}

This section starts with definitions and examples on Lie groupoids.
Then we discuss groupoid actions and linear representations, with special emphasis in the normal representation, which encodes the linear infinitesimal information around an orbit. We describe then the differential of the anchor map, and use this to characterize two important families: submersion groupoids and transitive groupoids.
Finally we discuss principal groupoid-bundles.

We suggest \cite{cw}, \cite{cf}, \cite{dz}, \cite{mkbook}, \cite{mm} as standard references for this section.


\subsection{Definitions and basic facts}

A {\bf smooth graph} $G\toto M$ consists of a manifold $M$ of objects, a manifold $G$ of arrows, and two submersions $s,t:G\to M$ indicating the {\bf source} and {\bf target} of an arrow.
We often write the arrows from right to left, thus by $y\xfrom g x$ we mean $x,y\in M$, $g\in G$, $s(g)=x$ and $t(g)=y$.
We call $M$ the {\bf base} of the graph. 

A {\bf Lie groupoid} consists of a smooth graph $G\toto M$ endowed with a smooth associative {\bf multiplication} $m$,
$$m:G\times_M G\to G \quad (z \xfrom{g_2}y,y\xfrom{g_1}x)\mapsto (z \xfrom{g_2g_1}x)$$
 where $G\times_M G=\{(g_2,g_1)|s(g_2)=t(g_1)\}\subset G\times G$ is the submanifold of composable arrows. This multiplication is required to have a {\bf unit} $u$ and an {\bf inverse} $i$, which are smooth maps
$$u:M\to G \quad x\mapsto (x\xfrom{1_x} x) \qquad i: G\to G \quad (y\xfrom g x)\mapsto (x\xfrom{g^{-1}}y)$$
 satisfying the usual axioms
$gg^{-1}=1_y$, $g^{-1}g=1_x$, $g1_x=g$ and $1_yg=g$ for all $y\xfrom g x$.
We refer to $s,t,m,u,i$ as the {\bf structural maps} of the Lie groupoid.
By an abuse of notation, we denote $(G\toto M, m)$ just by $G\toto M$, or even  $G$.

\bigskip

%

Given $G\toto M$ a smooth graph, a {\bf bisection} $B\subset G$ is an embedded submanifold such that the restrictions $s_B,t_B:B\to M$ of the source and target are open embeddings. Naming $U=s(B)$ and $V=t(B)$, the maps $s_B:B\to U$ and $t_B:B\to V$ are diffeomorphisms, and the composition $f_B=t_B s_B^{-1}:U\to V$ is called the {\bf underlying map} to $B$. We can visualize $B$ as a bunch of arrows from $U$ to $V$. 
Given any $g\in G$, it is easy to see that there always exists a bisection $B$ containing $g$.

When $G\toto M$ is a Lie groupoid, the bisections can be composed, inverted, and every open $U\subset M$ has a unitary bisection. The underlying maps to bisections of $G$ define hence a pseudogroup on $M$, the {\bf characteristic pseudogroup} of $G\toto M$.
There are local {\bf translation} and {\bf conjugation} maps associated to a bisection $B$.
$$L_B:G(U,-)\to G(V,-) \quad (y\xfrom h x )\mapsto (f_B(y)\xfrom{s_B^{-1}(y) h} x)$$
$$C_B:(G_U\toto U)\to (G_V\toto V) \quad (y\xfrom h x)\mapsto (f_B(y)\xfrom{s_B^{-1}(y) h s_B^{-1}(x)^{-1}}f_B(x))$$
Here we are using the notations $G(A,-)=t^{-1}(A)$ and $G_A=s^{-1}(A)\cap t^{-1}(A)$.

\bigskip

Given $G \toto M$ a Lie groupoid and $x\in M$, the {\bf s-fiber} at $x$ is defined by $G(-,x)=s^{-1}(x)\subset G$, the {\bf isotropy group} at $x$ is $G_x=s^{-1}(x)\cap t^{-1}(x)\subset G$, and the {\bf orbit} of $x$ is the set $O_x=\{y|\exists y\xfrom g x\}=t(G(-,x))$.

\begin{proposition}\label{orbits&isotropy}
Given $G\toto M$ and $x,y\in M$, the subset $G(y,x)\subset G$ is an embedded submanifold. In particular $G_x$ is a Lie group. The orbit $O_x\subset M$ is a (maybe not embedded) submanifold in a canonical way.
\end{proposition}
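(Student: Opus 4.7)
The plan is to exhibit $G(y,x)$ as a fiber of a constant-rank map and then to recover the orbit as a quotient. First, since $s:G\to M$ is a submersion, the $s$-fiber $S_x := s^{-1}(x)$ is a closed embedded submanifold of $G$. One has $G(y,x) = (t|_{S_x})^{-1}(y)$, so it suffices to show that $t|_{S_x}:S_x \to M$ has constant rank: the constant rank theorem then delivers $G(y,x)$ as a closed embedded submanifold of $S_x$, hence of $G$.

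To establish constant rank I would exploit the groupoid structure through translations. Given $h \in S_x$ with $t(h) = y$, right multiplication $R_h: s^{-1}(y) \to s^{-1}(x)$, $k \mapsto kh$, is a diffeomorphism sending $1_y$ to $h$ and satisfying $t\circ R_h = t|_{s^{-1}(y)}$, so the rank of $d(t|_{S_x})_h$ equals the rank of $d(t|_{S_y})_{1_y}$. Using the splitting $T_{1_y}G = du_y(T_yM) \oplus T_{1_y}S_y$ induced by the unit $u$, one checks that this rank equals $\mathrm{rank}(d(s,t)_{1_y}) - \dim M$. To see that the latter quantity is the same for every $y \in O_x$, I take any arrow $g_0$ from $y$ to $y' \in O_x$ and a bisection $B$ through $g_0$: the conjugation $C_B$ is a local diffeomorphism of $G$ with $C_B(1_y) = 1_{y'}$ and $(s,t)\circ C_B = (f_B\times f_B)\circ (s,t)$, so the two ranks coincide. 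Combined, these steps show $t|_{S_x}$ has constant rank on $S_x$, so $G(y,x)$ is an embedded submanifold. Taking $y = x$ yields the Lie group $G_x$, whose smooth multiplication and inversion are inherited from $G$.

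For the orbit I would invoke Corollary \ref{quotgrou} for the right action of $G_x$ on $S_x$. This action is free by left-cancellation ($hg = h \Rightarrow g = 1_x$) and proper, since $h_n \to h$ and $h_ng_n \to h'$ force $g_n = h_n^{-1}(h_ng_n) \to h^{-1}h'$. Thus $S_x/G_x$ is a manifold and $\pi:S_x \to S_x/G_x$ is a submersion. The $G_x$-orbits coincide with the fibers of $t|_{S_x}$, so $t|_{S_x}$ descends to an injective map $\bar t:S_x/G_x \to M$; the equality $\ker d\pi_h = \ker d(t|_{S_x})_h$ (both being the tangent space to the common orbit/fiber) forces $\bar t$ to be an immersion, and its image is $O_x$, endowing the orbit with a canonical (not necessarily embedded) submanifold structure.

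The main obstacle I expect is the constant rank claim, in particular the step that the isotropy dimension $\dim(\ker ds_{1_y}\cap \ker dt_{1_y})$ is locally constant along $O_x$: this is where the Lie groupoid structure is genuinely used, via the existence of a bisection through every arrow (equivalently, the characteristic pseudogroup), since a priori this dimension can jump from orbit to orbit.
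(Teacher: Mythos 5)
Your proof is correct and follows essentially the same route as the paper: one shows that the target restricted to the $s$-fiber has constant rank by translating with bisections, and then realizes the orbit as the quotient of the $s$-fiber by the free and proper right $G_x$-action, invoking Corollary \ref{quotgrou}. The only cosmetic difference is that the paper compares the ranks at two points $g,g'\in G(-,x)$ in a single step, via the left translation $L_B$ by a bisection $B$ through $g'g^{-1}$ and the identity $t_xL_B=f_Bt_x$, instead of your two-step reduction through the units and the anchor.
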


\begin{proof}
Denote by $t_x:G(-,x)\to M$ the restriction of the target map to the $s$-fiber.
Given $g,g'\in G(-,x)$ and a bisection $B$ containing $g'g^{-1}$, we can locally write $t_xL_B=f_Bt_x$ in a neighborhood of $g$, and since $L_B$ and $f_B$ are invertible, the rank of $t_x$ at the points $g$ and $L_B(g)=g'$ agree. This shows that $t_x$ has constant rank, hence its fibers $G(y,x)$ are embedded submanifolds. In particular $G_x$, with the operations induced by those of $G$, becomes a Lie group. This group acts freely and properly on $G(-,x)$ by the formula
$$G(-,x)\times G_x\to G(-,x) \qquad (y\xfrom{g'} x, x\xfrom g x) \mapsto (y \xfrom{g'g} x)$$
We can identify the quotient $G(-,x)/G_x$ with the orbit $O_x\subset M$, and regard it as a submanifold in a canonical way (cf. \ref{quotgrou}).
\end{proof}

The {\bf orbit space} $M/G$ is the set of orbits with the quotient topology. The quotient map $q:M\to M/G$ is open, as it follows from a simple argument on bisections.
The partition of $M$ into the connected components of the orbits is a singular foliation, called the {\bf characteristic foliation} of $G$.

The space $M/G$ is not a smooth manifold in general. We can think of the Lie groupoid $G\toto M$ as a way to describe certain smooth singular data on it (cf. \ref{sectionss}).

\bigskip

A {\bf map of graphs} $\phi:(G\toto M)\to (G'\toto M')$ is a pair of smooth maps
$$\phi^{ar}:G\to G' \qquad \phi^{ob}:M\to M'$$
that preserves the source and the target. We usually denote $\phi^{ar}$ and $\phi^{ob}$ simply by $\phi$.
A {\bf map of Lie groupoids} is a map between the underlying graphs that also preserves the multiplicative structure, that is, commutes with $m$, and therefore with $u$ and $i$.
We denote the category of Lie groupoids and maps between them by 
$$\{\text{Lie Groupoids}\}$$
Out of a Lie groupoid $G\toto M$ we have constructed a family of Lie groups $\{G_x\}_{x\in M}$ and a quotient map $M\to M/G$. These constructions are functorial, a map $\phi:(G\toto M)\to(G'\toto M')$ induces Lie group homomorphisms $\phi_*:G_x\to G_{\phi(x)}$ and a continuous map between the orbit spaces $\phi_*:M/G\to M'/G'$.







\subsection{Some examples}\label{sectionexamples}

Lie groupoids constitute a common framework to work with several geometric structures.
We give here some of the fundamental examples.
We do not include the examples of foliations and pseudogroups, for in these cases, the manifold $G$ may not be second countable nor Hausdorff, as we require. These and other important examples can be found in \cite{mkbook} and \cite{mm}. 

\begin{example}[Manifolds and Lie groups]
A manifold $M$ gives rise to the {\bf unit groupoid} with only unit arrows, where the five structural maps are identities, the isotropy is trivial and the orbits are just the points.
As other extremal case, a Lie group $G$ can be seen as a Lie groupoid with a single object.
$$M \quad \leadsto \quad M\toto M
\qquad \qquad
G  \quad \leadsto \quad G \toto \ast$$ 
These constructions preserve maps. We will identify manifolds and Lie groups with their associated Lie groupoids.
\end{example}

\begin{example}[Group actions]\label{grouacti}
If $G \action M$ is a Lie group acting over a manifold, the {\bf action groupoid} $G \ltimes M = (G \times M\toto M)$ is defined with source the projection, target the action, 
and multiplication, unit and inverse maps induced by those of $G$.
$$G \action M \quad \leadsto \quad G \times M\toto M$$
A typical arrow in the action groupoid has the form  $g\cdot x \xfrom{(g,x)} x$.
Orbits and isotropy correspond to the usual notions for actions. 
A map of actions induce a map between the corresponding action groupoids, but in general there are more maps than these. 

Action groupoids $G\ltimes M$ are fundamental examples. Not every Lie groupoid is of this form, but  we will see that every {\it proper} groupoid is locally {\it equivalent} to one of these (cf. \ref{linearization}).
\end{example}

\begin{example}[Submersions]\label{examsubm}
A submersion $q:M\to N$ yields a {\bf submersion groupoid} $M\times_N M \toto M$ with one arrow between two points if they belong to the same fiber. 
$$ M\to N \quad \leadsto \quad M\times_N M \toto M$$
The isotropy of a submersion groupoid is trivial and the orbits are the fibers of $q$.
A map between submersion groupoids $(M\times_N M\toto M)\to(M'\times_{N'}M'\toto M')$ is the same as a commutative square of smooth maps. We will characterize later the groupoids arising from submersions (cf. \ref{anchinje}).

Given a manifold $M$, the submersion groupoid of the identity $\id_M:M\to M$ yields the unit groupoid, and the submersion groupoid of the projection $\pi_M:M\to\ast$ is the {\bf pair groupoid} $M\times M\toto M$, which has exactly one arrow between any two objects.
Other interesting case arise from an open cover $\U=\{U_i\}_i$ of $M$. The several inclusions $U_i\to M$ yield a surjective submersion $\coprod_i U_i\to M$, and this yields a {\bf covering groupoid} $\coprod_{i,j}U_i\cap U_j \toto \coprod_i U_i$, also called {\bf Cech groupoid}.


Roughly speaking, every Lie groupoid emerges from a submersion $M\to S$ over a space $S$, which in this case is the manifold $N$, but in general it may be something singular, namely a differentiable stack. Later we will see how to formalize this idea (cf. \ref{presentations}).
\end{example}

\begin{example}[Principal group-bundles]\label{exampbun}
Let $G$ be a Lie group, $N$ a manifold, and let $G\action P\to N$ be a smooth principal bundle.
This is essentially the same as a free proper action $G\action P$, for the surjective submersion $P\to N$ can be recovered as the quotient map $P\to P/G$ (see eg. \cite[App. E]{sharpe}).

The {\bf gauge groupoid} $P\times^{G } P \toto N$ consists of the equivariant isomorphisms between fibers of the principal bundle. It can be constructed as the quotient of the pair groupoid $P\times P\toto P$ by the action of $G$.
$$G \action P\to N \quad \leadsto \quad P\times^{G } P \toto N$$
Here we are considering the diagonal action $G\action P\times P$, which is also free and proper, 
we are writing $P\times^{G } P=(P\times P)/{G }$, and identifying $P/G\cong N$.
The structural maps of the pair groupoid are equivariant and that is why they induce a Lie groupoid structure in the quotient (cf. \ref{quotgrou}).

A gauge groupoid is {\bf transitive}, namely it has a single orbit, and its isotropy at any point is isomorphic to $G$. A map between principal bundles leads to a map between their gauge groupoids, but this assignation is not injective in general. 
\end{example}

\begin{example}[Linear groupoids]\label{linear.groupoids}
The automorphisms $GL(V)$ of a vector space $V$ constitute the fundamental example of a Lie group. In a similar fashion, given $E\to M$ a smooth vector bundle, we can consider the {\bf general linear groupoid} $$E\to M \quad \leadsto \quad GL(E)\toto M$$
whose objects are the fibers of the vector bundle, and whose arrows are the linear isomorphisms between them. It can be defined as the gauge groupoid of the {\it frame bundle} of $E$.

This construction admits the usual variants. For instance, if the vector bundle is endowed with a metric we can define the {\bf orthogonal linear groupoid} $O(E)\toto M$, which consists of the isometries between the fibers.
\end{example}

\begin{example}[Orbifolds]\label{example.orbifolds}
Orbifolds are spaces locally modeled by quotients of euclidean spaces by finite group actions. We refer to \cite{mm} for a detailed treatment. During these notes we briefly discuss how orbifolds can be framed into the theory of Lie groupoids and differentiable stacks. 

Recall that an orbifold chart $(U,G ,\phi)$ on a space $O$
consists of a connected open $U\subset\R^d$ in some euclidean space, a finite group of $G\subset {\rm Diff}(U)$, and an open embedding $\phi:U/G \to O$.
An orbifold consists of a space $O$ endowed with an orbifold atlas, that is, a collection $\U=\{(U_i,G_i,\phi_i)\}_i$ of compatible orbifold charts. Two atlases define the same orbifold if they are compatible, namely if their union is again an atlas.

Given $O$ an orbifold and $\U$ a numerable atlas, we can define a Lie groupoid as follows.
$$O \ + \ \U \quad \leadsto \quad G\toto M$$
The manifold of objects is $M=\coprod_i U_i$. The manifold $G$ consists of germs of compositions of maps in some $G_i$, endowed with the sheaf-like manifold structure.
While the construction of this groupoid relies on the choice of an atlas, we will see that compatible atlases lead to {\it equivalent} Lie groupoids.
\end{example}

\subsection{Groupoid actions and representations}
\label{subsection.groupoid.actions}

Let $G\toto M$ be a Lie groupoid. Given $p:A\to M$ a smooth map, we can consider the good pullback
$G\times_M A=\{(g,a)|s(g)=p(a)\}$. A {\bf left groupoid action}
$$\theta:(G\toto M)\action (A\to M)$$
is a smooth map $\theta:G\times_M A\to A$, $(g,a)\mapsto \theta_g(a)$, such that $p(\theta_g(a))=t(g)$, $\theta_{1_x}=\id_{A_x}$ and $\theta_g\theta_h=\theta_{gh}$ when $g,h$ are composable. Right actions are defined analogously.
The map $p:A\to M$ is sometimes called the {\bf moment map} of the action.
An action $\theta$ realizes the arrows of the groupoid $G\toto M$ as symmetries of the family of fibers of the moment map, namely for each arrow $y\xfrom g x$ we have a diffeomorphism $\theta_g:A_x\to A_y$.

\begin{example}\

\begin{itemize}
\item
Actions of manifolds $(M\toto M)\action(A\to M)$ are trivial. Actions of Lie groups $(G\toto \ast)\action(A\to \ast)$ are the usual ones.
\item
An action $(G\ltimes M\toto M)\action(A\to M)$ of an action groupoid is the same as an action $G\action A$ and an equivariant map $A\to M$.
\item
For a submersion groupoid, an action $(M\times_N M\toto M)\action (A\to M)$ is the same as an equivalence relation $R$ on $A$ inducing a pullback square as bellow (cf. \ref{mapspbun}).
$$(M\times_N M\toto M)\action (A\to M) \quad \leftrightarrow \quad
\begin{matrix}
\xymatrix@R=10pt{A \ar[r] \ar[d] \ar@{}[dr]|(.7){\lefthalfcap}& A/R \ar[d]\\ M \ar[r] & N}
\end{matrix}$$
\end{itemize}
\end{example}

Given a groupoid action $G\action A$ we can construct the {\bf action groupoid} $G\ltimes A=(G\times_MA\toto A)$, on which the source is the projection, the target is the action, and composition, inverses and identities are induced by those of $G$. This generalizes the example \ref{grouacti}, and admits an obvious version for right actions.
We say that the action $G\action A$ is {\bf free} if the action groupoid has no isotropy, and that the action is {\bf proper} if the map $G \times_M A \to A\times A$, $(g,a)\mapsto (\theta_g(a),a)$ is so. 
By Godement criterion \ref{godement}, the orbit space of a free proper groupoid action inherits the structure of a manifold. This statement is in fact equivalent both to \ref{godement} and to \ref{anchinje}.

\bigskip

We can identify actions of $G$ with groupoid maps $\tilde G\to G$ of a special kind.
Given an action $(G\toto M)\action (A \to M)$, its moment map $A\to M$ and the projection $G\times_M A\to G$ define a groupoid map
$$p:(G\times_M A\toto A)\to (G\toto M)$$
inducing a pullback between the source maps. A map satisfying this property is called an {\bf action map}. Conversely, given any action map $p:(\tilde G\toto A)\to(G\toto M)$, the composition $G\times_M A\cong \tilde G\xto t A$ becomes a left action, we call it the {\bf underlying action}.
It is straightforward to check that these constructions are mutually inverse.

\begin{proposition}\label{actiactm}
There is a 1-1 correspondence between left actions and action maps.
$$(G\toto M)\action (A\to M) \quad \leftrightarrow \quad (\tilde G\toto A)\to(G\toto M)$$
\end{proposition}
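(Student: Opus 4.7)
My plan is to exhibit the two constructions preceding the statement as mutual inverses, checking the required compatibilities term by term.

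First, I would start with a left action $\theta:G\times_M A\to A$ and verify that the proposed map $p:(G\times_M A\toto A)\to(G\toto M)$ is a well-defined groupoid morphism with source-pullback property. The source diagram
$$\xymatrix{G\times_M A \ar[r] \ar[d] \ar@{}[dr]|(.7){\lefthalfcap} & G \ar[d]^s \\ A \ar[r]^p & M}$$
is a pullback by the very definition of $G\times_M A$, so once I check that the candidate groupoid structure on $G\times_M A\toto A$ (source = projection, target = $\theta$, composition $(g',\theta_g(a))\cdot(g,a)=(g'g,a)$, units $1_a=(1_{p(a)},a)$, inverses $(g,a)^{-1}=(g^{-1},\theta_g(a))$) satisfies the groupoid axioms, the action map is in place. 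Each axiom reduces to one of the axioms for $\theta$: associativity of multiplication to $\theta_{g'g}=\theta_{g'}\theta_g$, the unit law to $\theta_{1_x}=\id_{A_x}$, and the inverse law to the consequence $\theta_{g^{-1}}=\theta_g^{-1}$.

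Conversely, I would take an action map $p:(\tilde G\toto A)\to(G\toto M)$ and set $\theta:G\times_M A\xto{\sim}\tilde G\xto{t} A$, where the first arrow is the inverse of the pullback isomorphism. I need to verify the action axioms: the moment equation $p\circ\theta=t\circ p$ follows from $p$ preserving targets; $\theta_{1_x}=\id_{A_x}$ follows from $p$ sending units in $\tilde G$ to units in $G$ (combined with the pullback description, $1_a$ corresponds to $(1_{p(a)},a)$); the multiplicativity $\theta_g\theta_h=\theta_{gh}$ follows because $p$ preserves composition and the pullback square identifies $\tilde G\times_A\tilde G$ with the fiber product carried to $G\times_M G$ under $p$.

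Finally I would check that the two passages are mutually inverse. Starting from $\theta$, building the action groupoid $G\ltimes A$, and extracting the underlying action returns the composition $G\times_M A\to G\times_M A\xto{t}A$ whose target map is by construction $\theta$ itself. Starting from an action map $p$, forming the underlying action, and then building its action groupoid yields a groupoid map that factors through the pullback isomorphism $\tilde G\cong G\times_M A$; naturality of this isomorphism together with the fact that $p$ preserves source, target and multiplication identifies it with the original $p$.

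The only mildly delicate point I anticipate is the need to invoke the pullback property cleanly both for the arrow space (to recover $\tilde G\cong G\times_M A$) and for composable pairs (to identify $\tilde G\times_A\tilde G$ with $(G\times_M G)\times_M A$ so that multiplication transports correctly). Everything else is a routine unwinding of definitions, using only the groupoid and action axioms plus the good-pullback statement for submersions from \ref{pullbacks&submersions}.
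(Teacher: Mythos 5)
Your proposal is correct and follows exactly the route the paper takes: the forward direction is the action-groupoid construction with source the projection and target the action, the backward direction is the composite $G\times_M A\cong\tilde G\xto{t}A$, and the paper simply declares the verification that these are mutually inverse to be straightforward. You have merely written out the routine checks that the paper omits, so there is nothing to add.
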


A map of groupoids is an action map if and only if the associated target square is a pullback, thus we can also identify action maps with right actions.

We study now a particular type of actions.
Given a Lie groupoid $G\toto M$ and a vector bundle $E\to M$, a {\bf linear representation} $(G\toto M)\action (E\to M)$ is an action $\theta:G\times_ME\to E$ such that for all $y\xfrom g x$ in $G$ the map $\theta_g:E_x\to E_y$ is linear.

\begin{example}\

\begin{itemize}
\item
Representations of manifolds $(M\toto M)\action (E\to M)$ are trivial. Representations of Lie groups $(G\toto \ast)\action(E\to\ast)$ are the usual ones.
\item
Representations of an action groupoid $(G\ltimes M\toto M)\action(E\to M)$ are equivariant vector bundles.
\item
Given $q:M\to N$ a surjective submersion, a representation $(M\times_N M\toto M)\action (E\to M)$ is the same as a vector bundle $\tilde E\to N$ such that $q^*\tilde E=E$ (cf. \ref{mapspbun}).
\end{itemize}
\end{example}

Using an exponential law argument it can be proved that 
representations are in 1-1 correspondence with maps on the general linear groupoid.
$$(G\toto M)\action (E\to M) \quad \leftrightarrow \quad (G\toto M)\to(GL(E)\toto M)$$
See \cite[Prop. 1.7.2]{mkbook} for the transitive case. The general case is proved analogously.

The action map of a representation can be regarded as a compatible diagram of Lie groupoids and vector bundles. More precisely, a {\bf VB-groupoid}
$$\xymatrix{\Gamma \ar@<.5ex>[r] \ar@<-.5ex>[r]  \ar[d] & E \ar[d]\\
G \ar@<.5ex>[r] \ar@<-.5ex>[r] & M}$$
is a groupoid map $(\Gamma\toto E)\to(G\toto M)$ such that $\Gamma\to G$ and $E\to M$ are vector bundles, and the structural maps of $\Gamma\toto E$ are vector bundle maps.
The {\bf core} $C\to M$ is defined by $C=\ker(s:\Gamma\to E)|_M$, where we are identifying $M=u(M)$.

\begin{example}
Given $G\toto M$ we can construct a new Lie groupoid $TG\toto TM$ whose structural maps are the differentials of those of $G$. The canonical projections define the {\bf tangent VB-groupoid}.
$$(TG\toto TM)\to(G\toto M)$$
The core of this VB-groupoid is the {\bf Lie algebroid} $A_G$ associated to $G$ (cf. \cite{mkbook}, \cite{mm}).
\end{example}

When the core is trivial, namely $C=0_M\to M$, then the ranks of $\Gamma\to G$ and $E\to M$ agree and we can identify the total space $\Gamma$ with the pullback of $E$ along the source map, $\Gamma\cong G\times_M E$. Thus \ref{actiactm} provides the following.

\begin{proposition}\label{vbgroupo}
There is a 1-1 correspondence between representations of $G\toto M$ and VB-groupoids 
$(\Gamma\toto E)\to(G\toto M)$ with trivial core.
\end{proposition}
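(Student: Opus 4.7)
The plan is to invoke Proposition \ref{actiactm} and then verify that the VB-structure is precisely what encodes linearity.

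First I would go forward: given a linear representation $\theta:(G\toto M)\action (E\to M)$, form the action groupoid $G\ltimes E=(G\times_M E\toto E)$. The canonical projections assemble into a groupoid map $(G\times_M E\toto E)\to(G\toto M)$. I would note that $G\times_M E\to G$ is the pullback vector bundle $s^*E$, that $E\to M$ is already a vector bundle, and that each structural map of $G\ltimes E$ is a vector bundle map: the source is the projection to $E$, the target is $\theta$ (linear on fibers by hypothesis), the unit is $x\mapsto (1_x,0)$, the inverse is $(g,e)\mapsto(g^{-1},\theta_g(e))$, and the multiplication is $((h,\theta_g(e)),(g,e))\mapsto(hg,e)$, all linear along the fibers. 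The core is $\ker(s)|_M=\{(1_x,e):e=0\}\cong 0_M$, so it vanishes.

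For the converse I would start from a VB-groupoid $(\Gamma\toto E)\to(G\toto M)$ with trivial core. The key observation is that $s:\Gamma\to E$ is simultaneously a vector bundle map over $s:G\to M$ and a submersion (being the source of a Lie groupoid). A vector bundle map that is a submersion is fiberwise surjective, so on each fiber $s|_{\Gamma_g}:\Gamma_g\to E_{s(g)}$ is a linear surjection; triviality of the core forces $\dim\Gamma_g=\dim E_{s(g)}$, hence $s$ is a fiberwise linear isomorphism. The map $\Gamma\to G\times_M E$, $\gamma\mapsto(\pi(\gamma),s(\gamma))$, is therefore an isomorphism of vector bundles over $G$, and identifies the source square of $(\Gamma\toto E)\to(G\toto M)$ with a pullback square. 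By Proposition \ref{actiactm}, this produces an underlying action $\theta:G\times_M E\to E$. Since $\theta$ corresponds to the target map $t:\Gamma\to E$, which is a vector bundle map, the induced maps $\theta_g:E_{s(g)}\to E_{t(g)}$ are linear, so the action is a representation.

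Finally, I would check these two constructions are mutually inverse, which is immediate from \ref{actiactm}: the identifications $\Gamma\cong G\times_M E$ and the bijection between actions and action maps are compatible with all the structures, and the linearity/VB-compatibility is preserved in both directions.

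The only real obstacle is the passage from ``trivial core'' to ``$\Gamma\cong s^*E$'', which relies on noticing that in a VB-groupoid the source is automatically a fiberwise surjective vector bundle map because it is both linear and a submersion; everything else is bookkeeping on top of Proposition \ref{actiactm}.
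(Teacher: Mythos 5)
Your argument is correct and follows essentially the same route as the paper: triviality of the core forces the ranks of $\Gamma\to G$ and $E\to M$ to agree, so $\Gamma\cong G\times_M E$ via the source, and then Proposition \ref{actiactm} does the rest. Your justification that $s:\Gamma\to E$ is fiberwise surjective (being simultaneously a vector bundle map and a submersion) is a useful detail that the paper leaves implicit.
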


VB-groupoids are something more general than representations. Actually, they admit a nice 
interpretation in the theory of {\it representations up to homotopy} (cf. \cite{gsm}).
For more on VB-groupoids and their infinitesimal counterpart we refer to \cite{bcd}.


\subsection{The normal representation}\label{section.normal.representation}

Given $G\toto M$ a Lie groupoid and $O\subset M$ an orbit, the {\it normal representation} is a representation of the restriction $G_O\toto O$ over the normal bundle $NO\to O$. It encodes the linear infinitesimal information around the orbit and plays a fundamental role in the theory. We present it here after a short digression on restrictions.

\bigskip

Given $G\toto M$ a Lie groupoid and $A\subset M$ a submanifold, the subset $G_A\subset G$ may not be a submanifold in general, and even if that is the case, $G_A\toto A$ may not be a Lie groupoid.

\begin{example}
Let $G\toto M$ be the Lie groupoid arising from the projection $S^1\times\R\to S^1$ (cf. \ref{examsubm}). Let $A=\{(e^{it^2},t):t\in\R\}$ and $B=\{(e^{it^3},t):t\in\R\}$. Then $A,B\subset M$ are embedded submanifolds, but $G_A\subset G$ is not a submanifold, and even when $G_B\subset G$ is embedded, the restriction of the source map $G_B\to B$ is not a submersion and hence $G_B\toto B$ is not a Lie groupoid.
\end{example}

We say that the restriction $G_A\toto A$ is {\bf well-defined} when $G_A\subset G$ is a submanifold, $G_A\toto A$ is a Lie groupoid, and the following is a good pullback of manifolds.
$$\xymatrix{G_A \ar[r] \ar[d]  \ar@{}[dr]|(.7){\lefthalfcap}& G \ar[d] \\ A\times A\ar[r] & M\times M}$$
For instance, given $U\subset M$ open, the restriction $G_U\toto U$ is clearly well-defined.

\begin{proposition}
Given $G\toto M$ a Lie groupoid and $O\subset M$ an orbit, the restriction $G_O\toto O$ is well-defined.
\end{proposition}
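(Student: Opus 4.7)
The plan is to realize $G_O\toto O$ as the gauge groupoid of a principal bundle naturally attached to the orbit, and then transfer the resulting smooth structure back to $G_O\subset G$. Fix $x\in O$ and let $P:=G(-,x)$. As recalled in the previous proposition, $P$ is an embedded submanifold of $G$ on which the isotropy group $G_x$ acts freely and properly on the right, with quotient $P/G_x\cong O$ realising the canonical manifold structure on the orbit. Hence $G_x\action P\to O$ is a principal $G_x$-bundle, and its associated gauge groupoid (cf. \ref{exampbun})
$$H:=P\times^{G_x}P\toto O$$
is a transitive Lie groupoid of dimension $2\dim O+\dim G_x$.

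Next I would define $\Phi:H\to G$ by $\Phi([h_1,h_2])=h_1 h_2^{-1}$. This map is well-defined (since $(h_1 k)(h_2 k)^{-1}=h_1 h_2^{-1}$ for all $k\in G_x$), smooth (factoring the map $\mu:P\times P\to G$, $(h_1,h_2)\mapsto h_1 h_2^{-1}$, through the quotient submersion $P\times P\to H$), and a homomorphism of Lie groupoids covering the inclusion $O\to M$. Its image is exactly $G_O$: any $g\in G_O$ can be written as $g=(gh)h^{-1}$ for any $h\in G(s(g),x)$ with $gh,h\in P$, and the identity $h_1 h_2^{-1}=h_1'h_2'^{-1}$ forces $k:=h_2'^{-1}h_2=h_1'^{-1}h_1\in G_x$ with $(h_1,h_2)=(h_1',h_2')\cdot k$. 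So $\Phi$ is a bijection onto $G_O$.

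The main obstacle is to show that $\Phi$ is an immersion; once this is achieved, transporting the structure of $H$ along $\Phi$ makes $G_O$ an initial submanifold of $G$ and a Lie groupoid over $O$. I would compute
$$d\mu_{(h_1,h_2)}(v_1,v_2)=dR_{h_2^{-1}}(v_1)+dL_{h_1}(di(v_2))\in T_g G,$$
where $g=h_1 h_2^{-1}$. The first summand lies in $\ker ds$ and the second in $\ker dt$; each is a subspace of $T_g G$ of dimension $\dim P$, and their intersection sits inside $\ker(ds)\cap\ker(dt)=T_g G(t(g),s(g))$ of dimension $\dim G_x$, so the image of $d\mu$ has dimension at least $2\dim P-\dim G_x=\dim H$. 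Conversely, a direct inspection yields $ds(d\mu(v_1,v_2))=dt(v_2)\in T_{s(g)}O$ and $dt(d\mu(v_1,v_2))=dt(v_1)\in T_{t(g)}O$, so the image lands inside $V_g:=\{v\in T_g G:ds(v)\in T_{s(g)}O,\ dt(v)\in T_{t(g)}O\}$, which again has dimension $2\dim O+\dim G_x=\dim H$. Thus $d\mu$ surjects onto $V_g$ and $d\Phi$ is a linear isomorphism by dimension count. The good pullback condition on the defining square then follows for free: topologically $G_O=(s,t)^{-1}(O\times O)$, and the identification $T_g G_O=V_g$ is precisely the exact sequence demanded in the definition.
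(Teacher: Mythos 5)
Your construction is essentially correct but follows a genuinely different route from the paper's. The paper's proof rests on the observation that $O$ is an \emph{initial} submanifold of $M$ (every smooth map into $M$ with image in $O$ factors smoothly through $O$, via local lifts along the constant-rank map $t_x:G(-,x)\to M$); it then lifts the structure of $O$ along the submersion $s$ to make $G_O=s^{-1}(O)$ an initial submanifold of $G$, from which the good-pullback property of the anchor square follows. You instead build the candidate groupoid externally, as the gauge groupoid $H=P\times^{G_x}P$ of the principal bundle $G_x\action P=G(-,x)\to O$, and push it into $G$ by the division map. Your immersion argument is sound: the two summands of $d\mu$ fill out $\ker d_gs$ and $\ker d_gt$ respectively, their intersection is $T_gG(t(g),s(g))$ of dimension $\dim G_x$ by \ref{diffanch}, and the dimension count against $V_g$ closes the argument. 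What this route buys is an explicit description $T_gG_O=\ker d_gs+\ker d_gt=V_g$ and the manifest transitivity of $G_O\toto O$; what it costs is that the smooth structure is imported from an auxiliary object rather than read off from $G$ itself.

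The one step you should not wave away is the claim that the good-pullback condition ``follows for free.'' The tangent-space half does follow from $T_gG_O=V_g$, but the definition also requires the square to be a pullback of \emph{topological spaces}, where $O$ carries its orbit topology rather than the subspace topology from $M$ (these differ, e.g., for a Kronecker leaf), so ``topologically $G_O=(s,t)^{-1}(O\times O)$'' is, as written, only a set-theoretic identity. What must be checked is that the continuous bijection $H\to(O\times O)\times_{M\times M}G$ is a homeomorphism: if $g_\alpha\to g$ in $G$ with $s(g_\alpha)\to s(g)$ and $t(g_\alpha)\to t(g)$ in $O$, then the corresponding classes converge in $H$. This is true but needs an argument; for instance, choose a local section $\sigma$ of the surjective submersion $t_x:P\to O$ near $s(g)$, set $h_2^\alpha=\sigma(s(g_\alpha))$ and $h_1^\alpha=g_\alpha h_2^\alpha$, and use continuity of multiplication together with the fact that $P=s^{-1}(x)$ is closed and embedded in $G$ to get $h_i^\alpha\to h_i$ in $P$, hence convergence in $H$. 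A smaller point: an injective immersion does not by itself make $G_O$ an \emph{initial} submanifold as you assert, but initiality is not part of the definition of a well-defined restriction, so this does not affect the proof.
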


\begin{proof}
The key point here is that an orbit $O\subset M$ is an {\it initial submanifold}, namely every smooth map $Z\to M$ whose image lies in $O$ restricts to a smooth map $Z\to O$. This is because $t_x:G(-,x)\to M$ has constant rank, hence a map $Z\to M$ with image included in $O$ can be locally lifted to a map $Z\to G(-,x)$, proving that the co-restriction $Z\to O$ is also smooth.

Now, since $G_O=s^{-1}(O)=t^{-1}(O)$, we can lift the manifold structure in $O\subset M$ to one in $G_O\subset G$ that makes it an initial submanifold of the same codimension. The left square below is a good pullback by construction. It easily follows from this that the right square is a good pullback as well.
$$\xymatrix{G_O \ar[r]^s \ar[d]  \ar@{}[dr]|(.7){\lefthalfcap}& G \ar[d] \\ O\ar[r]_s & M}
\qquad
\xymatrix{G_O \ar[r] \ar[d]  \ar@{}[dr]|(.7){\lefthalfcap}& G \ar[d] \\ O\times O\ar[r] & M\times M}$$
\end{proof}

Given $G\toto M$ and $O\subset M$, we can construct a sequence of VB-groupoids
$$(TG_O\toto TO) \quad \to \quad (TG|_{G_O}\toto TM|_O) \quad \to \quad (N(G_O)\toto NO)$$
where the first one is the tangent of $G_O$, the second one is the {\it restriction} of the tangent of $G$ to $G_O$, and the third one, the {\bf normal bundle}, is defined by the quotient vector bundles, with the induced structural maps.

The submanifolds $G_O\subset G$ and $O\subset M$ have the same codimension, 
then the ranks of the vector bundles $N(G_O)$ and $NO$ agree, 
the core of $N(G_O)\toto NO$ is trivial
and there is an underlying groupoid representation (cf. \ref{vbgroupo}).
$$\eta:(G_O\toto O)\action(NO\to O)$$
This is called the {\bf normal representation} of $G$ at the orbit $O$.

Unraveling this construction, the normal representation can be geometrically described as follows:
if $\gamma$ is a curve on $M$ whose velocity at 0 represents $v\in N_xO$, and $\tilde\gamma$ is a curve on $G$ such that $\tilde\gamma(0)=g$ and $s\circ\tilde\gamma=\gamma$, then $\eta_g(v)\in N_yO$ is defined by the velocity at 0 of $t\circ\tilde\gamma$.

\bigskip

Fixed $x\in M$, the normal representation can be restricted to the isotropy group, say $\eta_x:G_x\action N_xO$. 
For some purposes, the restriction $\eta_x$ manages to encode the necessary information of $\eta$.
Note that if $x,y$ belong to the same orbit, then an arrow $y\xfrom g x$ yields an isomorphism of group representations $(G_x\action N_xO) \cong (G_y\action N_yO)$.

%

\begin{remark}
The normal representation is functorial. If 
$\phi:(G\toto M)\to(G'\toto M')$ is a map sending an orbit $O\subset M$ to $O'\subset M'$, then we have a naturally induced morphism of VB-groupoids and of representations
$\phi_*:(G_O\action NO)\to(G'_{O'}\action NO')$.
In particular, for each $x\in O$, there is a morphism of Lie group representations $\phi_*:(G_x\action N_xO)\to (G_{\phi(x)}\action{N_{\phi(x)}}O')$.
\end{remark}


\subsection{The anchor map}

Given $G\toto M$ a Lie groupoid, its {\bf anchor} $\rho_G=(t,s):G\to M\times M$ is the map whose components are the source and the target, namely $\rho(y\xfrom g x)=(y,x)$.
The image of $\rho_G$ is the equivalence relation on $M$ defining the orbit space $M/G$, and its fiber over a diagonal point $(x,x)$ is the isotropy group $G_x$.


\begin{proposition}\label{diffanch}
Given $y\xfrom g x$ in $G$, the differential of the anchor $d_g\rho:T_gG\to T_yM\times T_xM$ has kernel and image given by
$$\ker( d_g\rho) = T_gG(y,x) \qquad \im( d_g\rho) = \{(v,w)| [v]=\eta_g[w] \}$$
\end{proposition}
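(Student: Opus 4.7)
The plan is to compute the kernel and the image separately, then close the image computation by a dimension count.

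For the kernel, I would unfold the definition $\rho=(t,s)$: a vector $X\in T_gG$ lies in $\ker(d_g\rho)$ iff $d_gs(X)=0$ and $d_gt(X)=0$. Since $s$ is a submersion with fiber $G(-,x)$, the first condition is equivalent to $X\in T_gG(-,x)$. Now I would invoke the argument used in the proof of \ref{orbits&isotropy}: the restriction $t_x=t|_{G(-,x)}\colon G(-,x)\to M$ has constant rank (because left translation by a bisection intertwines $t_x$ with a diffeomorphism of the base), so its fiber $G(y,x)=t_x^{-1}(y)$ is an embedded submanifold with $T_gG(y,x)=\ker(d_gt_x)$. Combined with the first condition, this gives $\ker(d_g\rho)=T_gG(y,x)$.

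For the image, one inclusion is an immediate translation of the geometric description of $\eta$. Given $X\in T_gG$, pick any smooth curve $\tilde\gamma$ in $G$ with $\tilde\gamma(0)=g$ and $\tilde\gamma'(0)=X$, and set $w=d_gs(X)$, $v=d_gt(X)$. Then $s\circ\tilde\gamma$ and $t\circ\tilde\gamma$ are curves in $M$ based at $x$ and $y$ respectively, with velocities $w$ and $v$; by the definition of the normal representation recalled in \ref{section.normal.representation}, $[v]=\eta_g[w]$ in $N_yO$. So $\im(d_g\rho)\subseteq\{(v,w)\mid [v]=\eta_g[w]\}$.

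To upgrade this inclusion to an equality I would count dimensions. On the left, $\dim\im(d_g\rho)=\dim G-\dim G(y,x)=\dim G-\dim G_x$. Using that $s$ is a submersion gives $\dim G=\dim G(-,x)+\dim M$, and the principal $G_x$-bundle $G(-,x)\to O$ (from the end of the proof of \ref{orbits&isotropy}) gives $\dim G(-,x)=\dim O+\dim G_x$, so $\dim\im(d_g\rho)=\dim M+\dim O$. On the right, the set is the kernel of the linear map $T_yM\oplus T_xM\to N_yO$, $(v,w)\mapsto [v]-\eta_g[w]$, which is surjective because $v\mapsto[v]$ already is, and hence has kernel of dimension $2\dim M-(\dim M-\dim O)=\dim M+\dim O$. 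The dimensions agree, so the inclusion is an equality.

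The only real subtlety I anticipate is making sure the normal representation is genuinely well-defined at the level of curves (independence of the lift $\tilde\gamma$ modulo $T_yO$), but this is part of the setup of $\eta$ in \ref{section.normal.representation} and does not require new work here.
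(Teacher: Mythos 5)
Your proposal is correct and follows essentially the same route as the paper: the kernel via the constant rank of $t_x:G(-,x)\to M$, one inclusion of the image by unravelling the definition of the normal representation (you use the curve-level description, the paper the equivalent commutative square onto $N_yO\times N_xO$), and the reverse inclusion by the same dimension count through the fibration $G(y,x)\to G(-,x)\to O_x$. No gaps.
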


\begin{proof}
The description of the kernel follows from the fact that $t:G(-,x)\to M$ has constant rank and fiber $G(y,x)$.
Regarding the image, note that the map $d_g\rho$ yields a commutative square
$$\xymatrix{T_gG \ar[r] \ar[d]& T_yM\times T_xM \ar[d]\\ N_gG_O \ar[r] & N_yO\times N_xO}$$
from which any vector in ${\rm Im}(d_g\rho)$ satisfies the equation involving the normal representation.
The other inclusion follows by an argument on the dimensions:
the fibration $G(y,x)\to G(-,x) \to O_x$ implies that
$$\dim \ker( d_g\rho) =\dim G(y,x)=\dim G- \dim M -\dim O$$
Then we conclude
$$\codim \im( d_g\rho)= 2\dim M - \dim G + \dim \ker( d_g\rho)= \dim M - \dim O$$
\end{proof}

Previous proposition plays a role in many results.
An immediate corollary is that the anchor is injective if and only if it is an injective immersion, and it is surjective if and only if it is a surjective submersion. 
Next we provide characterizations both for submersion groupoids and for gauge groupoids.

\begin{proposition}\label{anchinje}
The submersion groupoid construction (cf. \ref{examsubm}) provides a 1-1 correspondence between surjective submersions and Lie groupoids $G\toto M$ with anchor closed and injective.
$$\begin{matrix}
G\toto M \qquad  & \mapsto &\qquad M\to M/G \\
 M\times_N M\toto M \qquad & \leftmapsto &\qquad M\to N
\end{matrix}$$
\end{proposition}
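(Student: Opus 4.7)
The plan is to verify the two directions of the correspondence and then that the assignments are mutually inverse.

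For the easy direction, start with a surjective submersion $q:M\to N$. By \ref{pullbacks&submersions} the pullback $M\times_N M$ is a good pullback, so the canonical map $M\times_N M\to M\times M$, which is precisely the anchor of the submersion groupoid, is a closed embedding; in particular it is injective and closed.

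For the converse, let $G\toto M$ satisfy that $\rho=(t,s):G\to M\times M$ is closed and injective. Injectivity forces each $G(y,x)$ to contain at most one arrow, so by \ref{diffanch} we have $\ker d_g\rho = T_gG(y,x)=0$ for every $g$, so $\rho$ is an injective immersion. Since $\rho$ is also closed, it is a closed embedding. Therefore $R:=\rho(G)\subset M\times M$ is a closed embedded submanifold and $\rho:G\to R$ is a diffeomorphism of manifolds. The subset $R$ is the image of the anchor, which is always an equivalence relation on $M$ (reflexivity from $u$, symmetry from $i$, transitivity from $m$). Since $s,t:G\to M$ are submersions and $\rho$ is a diffeomorphism onto $R$, the restrictions of the projections $\pi_1|_R,\pi_2|_R:R\to M$ are submersions. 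The Godement criterion \ref{godement} then produces a manifold structure on $N:=M/R=M/G$ making $q:M\to N$ a surjective submersion.

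It remains to verify that the two constructions are mutually inverse. Starting from $q:M\to N$ and forming the submersion groupoid, its orbit relation is exactly the fibered equivalence relation of $q$; since $q$ is a surjective submersion and hence a topological quotient admitting local sections, the induced map $M/(M\times_N M)\to N$ is a diffeomorphism, and we recover $q$. Conversely, starting from $G\toto M$ with closed injective anchor and passing to $q:M\to M/G$, the submersion groupoid $M\times_{M/G}M$ is precisely the relation $R=\rho(G)$, and $\rho:G\to R$ is a Lie groupoid isomorphism: the source and target of $R$ are the two projections, which correspond via $\rho$ to $s$ and $t$, and multiplication in $R$ is forced by transitivity of the relation and must coincide with that of $G$ transported by $\rho$, since $\rho$ is a map of graphs whose target is a submersion groupoid (so all groupoid structure is determined by $s,t$).

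The main subtlety is the application of the Godement criterion: one must know that $R$ is a closed embedded submanifold with submersive projections, and this is where the hypotheses closed and injective on $\rho$ combine with the constant-rank description of $\ker d\rho$ from \ref{diffanch} to upgrade closed and injective to a closed embedding. Once this step is in place, the rest is a routine unraveling of definitions.
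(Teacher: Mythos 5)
Your proposal is correct and follows essentially the same route as the paper: use \ref{diffanch} to upgrade the closed injective anchor to a closed embedding, then apply the Godement criterion \ref{godement}; the forward direction and the mutual-inverse check are routine in both versions (your appeal to the good pullback of \ref{pullbacks&submersions} for closedness of the anchor of $M\times_N M$ is a harmless variant of the paper's "trivial isotropy and Hausdorff orbit space" argument).
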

\begin{proof}
Given a surjective submersion $q:M\to N$, its submersion groupoid $M\times_N M\toto M$ has trivial isotropy and Hausdorff orbit space, hence its anchor is injective and closed.
Conversely, given $G\toto M$ whose anchor is closed and injective, it follows from \ref{diffanch} that the anchor is also an immersion, hence a closed embedding, and we can use Godement criterion \ref{godement} to endow the quotient $M/G$ with a manifold structure.
These constructions are mutually inverse up to obvious isomorphisms.
\end{proof}



\begin{proposition}\label{anchsurj}
The gauge construction presented in \ref{exampbun} provides a 1-1 correspondence between transitive Lie groupoids $G\toto M$ and principal group-bundles $G\action P\to M$.
$$\begin{matrix}
G\toto M \qquad  & \mapsto & \qquad G_x\action G(-,x)\to M \\
 P\times^H P \toto M \qquad & \leftmapsto & \qquad H\action P\to M
\end{matrix}$$
\end{proposition}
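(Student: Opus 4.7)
The plan is to verify each direction of the correspondence and then check they are mutually inverse up to the canonical isomorphisms.

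For the forward direction, fix $x\in M$ and consider $G(-,x) = s^{-1}(x)$, which is an embedded submanifold since $s$ is a submersion. By Proposition \ref{orbits&isotropy}, right multiplication yields a free and proper action $G(-,x)\raction G_x$, and the target restricted to $G(-,x)$ has constant rank with fibers the $G_x$-orbits. Under the transitivity assumption, $t\colon G(-,x)\to M$ is surjective, hence a surjective submersion whose fiber-wise quotient identifies $M$ with $G(-,x)/G_x$. This is exactly the data of a principal $G_x$-bundle over $M$.

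For the backward direction there is nothing to add beyond Example \ref{exampbun}: a principal bundle $H\action P\to M$ produces its gauge groupoid $P\times^H P\toto M$, which was already observed there to be transitive with isotropy isomorphic to $H$.

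To verify mutual inverseness, I would proceed as follows. Starting with a principal bundle $H\action P\to M$ and a basepoint $p_0\in P$ with $\pi(p_0)=x$, the map $p\mapsto [p,p_0]$ identifies $P$ with $G(-,x)$ where $G=P\times^H P$, and $h\mapsto [h\cdot p_0,p_0]$ identifies $H$ with $G_x$; these identifications are equivariant, recovering the original principal bundle. Conversely, starting with a transitive groupoid $G\toto M$ and building the gauge groupoid of $G_x\action G(-,x)\to M$, I would define a map of graphs
\[
\Phi\colon G(-,x)\times^{G_x} G(-,x)\to G,\qquad [g,g']\mapsto g\,g'^{-1}.
\]
This is well-defined (since $(gh)(g'h)^{-1}=gg'^{-1}$), a groupoid map, smooth as the quotient of a smooth map, injective because $gg'^{-1}=kk'^{-1}$ forces $[g,g']=[k,k']$, and surjective because any $y\xfrom{a}z$ can be written $a=(ag)g^{-1}$ for any $g\in G(-,x)$ with $t(g)=z$ (such $g$ exists by transitivity). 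Since $\Phi$ is a bijective map of Lie groupoids between spaces of the same dimension and its source inherits a well-behaved manifold structure via \ref{quotgrou}, it is a diffeomorphism, hence an isomorphism of Lie groupoids.

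The main subtlety, rather than a true obstacle, is bookkeeping: the forward construction depends on a choice of basepoint $x$, so strictly the correspondence is between transitive Lie groupoids and principal bundles up to isomorphism; different choices of basepoint yield isomorphic principal bundles via the translation by any arrow $y\xfrom{g}x$, which conjugates $G_x\cong G_y$ and carries $G(-,x)\to G(-,y)$ by left multiplication by $g$. Aside from this, the heart of the argument is simply applying Godement's criterion \ref{godement} (to build the quotient $M=G(-,x)/G_x$) and checking that the two canonical maps described above are diffeomorphisms, both of which reduce to facts already established in \ref{orbits&isotropy} and \ref{diffanch}.
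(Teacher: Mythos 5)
Your proposal follows essentially the same route as the paper: fix $x$, form the principal bundle $G_x\action G(-,x)\xrightarrow{t_x} M$ using the free proper action from \ref{orbits&isotropy}, observe that transitivity forces $t_x$ to be a surjective submersion so that the quotient manifold structure on $G(-,x)/G_x$ agrees with that of $M$ (the paper gets this from \ref{diffanch}, you from constant rank plus surjectivity --- both fine), and then check the two constructions are mutually inverse, a step the paper simply declares ``easy.'' One step of your verification is stated incorrectly as written: a bijective smooth map between manifolds of the same dimension need \emph{not} be a diffeomorphism (consider $x\mapsto x^3$), so bijectivity of $\Phi\colon G(-,x)\times^{G_x}G(-,x)\to G$ plus equality of dimensions does not suffice. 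The repair is short: the composition of $\Phi$ with the quotient projection is the division map $(g,g')\mapsto g\,g'^{-1}$, which is a submersion (for fixed $g'$ it is the diffeomorphism $L_{g'^{-1}}\colon G(-,x)\to G(-,t(g'))$, and $t|_{G(-,x)}$ is a submersion onto $M$); since the projection is a surjective submersion, $\Phi$ is itself a submersion, and a bijective submersion is a diffeomorphism. With that patch your argument is complete and, if anything, more detailed than the paper's.
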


\begin{proof}
To every principal bundle $G\action P\to M$  we can associate its gauge groupoid $P\times^GP\toto P$, which is clearly transitive. Conversely, given $G\toto M$ a transitive Lie groupoid, by fixing some $x\in M$ we can associate to it the principal bundle $G_x\action G(-,x)\xto{t_x} M$. Note that since the anchor is a submersion by \ref{diffanch}, the map $t_x:G(-,x)\to M$ also is a submersion and both manifold structures on $M$, the original and that of the orbit, agree.
It is easy to check that these constructions are mutually inverse up to isomorphism.
\end{proof}

\begin{remark}
In \ref{anchinje} the 1-1 correspondence extends to maps. This may be understood as a reformulation of Godement criterion.
On the other hand, the correspondence in \ref{anchsurj} does not preserves maps. In order to get a principal bundle out of a transitive groupoid we need to pick an arbitrary object, and general maps need not to respect this choice.
\end{remark}


\subsection{Principal groupoid-bundles}\label{section principal bundles}

Groupoid-bundles are a natural generalization of group-bundles, on which much of the theory can be reconstructed. In this subsection we give the definition and the basic properties.

\bigskip
\def\xaction#1{\overset{#1}{\action}}

Let $G\toto M$ be a Lie groupoid and let $N$ be a manifold.
A {\bf left $G$-bundle} $G\action P\to N$ consists of a left action 
$\theta:G\action P$ and a surjective submersion
$q:P\to N$ such that the fibers of $q$ are invariant by $\theta$, namely $q(\theta_g(x))=q(x)$ for all $(g,x)\in G\times_M P$.
There is a canonical map from the action groupoid to the submersion groupoid,
$$\xi:(G\times_M P\toto P) \to (P\times_N P\toto P) \qquad
(\theta_g(x)\xfrom{(g,x)}x)\mapsto(\theta_g(x),x)$$
A bundle $G\action P\to N$ is called {\bf principal} if the action is free and the orbits are exactly the fibers of the submersion. Note that, in view of \ref{anchinje}, the bundle is principal if and only if $\xi$ is an isomorphism.

Right bundles $N \from P \raction G$, as well as the corresponding notions, are defined analogously.
 

\begin{example}\

\begin{itemize}
\item Principal $(G\toto\ast)$-bundles are the usual principal group-bundles.
\item A $(M\toto M)$-bundle is the same as a pair of maps $M\from P\to N$ where the second leg is a surjective submersion. It is principal if and only if $P\to N$ is a diffeomorphism.
\item Given a group action $G\action M$, a principal $G\ltimes M$-bundle is the same as a principal $G$-bundle $G\action P\to N$ and an equivariant map $P\to M$.
\end{itemize}
\end{example}

In a principal bundle $G\action P\to N$ the action $\theta$ is free and proper.
Conversely, given a free proper action $G\action P$, it can be seen that the action groupoid $G\ltimes P$ has a smooth quotient $P/G$ (cf. \ref{anchinje}) and therefore $G\action P\to P/G$ is a principal bundle. 
In other words, the submersion $q$ is implicit in the action $\theta$, as it happens in the group case (cf. \cite[App. E]{sharpe}). Thus we have

\begin{proposition}\label{charpbun}
There is a 1-1 correspondence between principal $G$-bundles and free proper actions of $G$.
\end{proposition}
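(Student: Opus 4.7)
The plan is to verify both directions of the correspondence and then check that the two constructions are mutually inverse, with the key technical input being Proposition \ref{anchinje}.

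For the forward direction, suppose $G\action P\to N$ is a principal bundle. Freeness is part of the definition, so the task is properness of the action map $\mu:G\times_M P\to P\times P$, $(g,x)\mapsto(\theta_g(x),x)$. I would factor $\mu$ as $\mu=\iota\circ\xi$, where $\xi:G\times_M P\to P\times_N P$ is the canonical comparison map and $\iota:P\times_N P\hookrightarrow P\times P$ is the inclusion. By the principality assumption $\xi$ is a diffeomorphism, so it suffices to show $\iota$ is proper. But $\iota$ is the base-change of the diagonal $\Delta_N:N\to N\times N$ along $q\times q:P\times P\to N\times N$, and $\Delta_N$ is a closed embedding (all our spaces being Hausdorff), hence proper; since proper maps form a nice class stable under base-change, $\iota$ is proper, and so is $\mu$.

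For the backward direction, suppose $G\action P$ is a free proper action. Form the action groupoid $G\ltimes P=(G\times_M P\toto P)$; its anchor is exactly $\mu$ above. Freeness is equivalent to injectivity of $\mu$, and properness implies that $\mu$ is closed. Hence $G\ltimes P$ has injective closed anchor, and by Proposition \ref{anchinje} the orbit space $P/G$ inherits a unique manifold structure such that $q:P\to P/G$ is a surjective submersion, and the comparison $\xi:G\ltimes P\to P\times_{P/G}P$ is an isomorphism. This last isomorphism says precisely that the orbits of $G$ on $P$ coincide with the fibers of $q$, so $G\action P\to P/G$ satisfies all the conditions of a principal $G$-bundle.

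Finally, to close the bijection I would check the two constructions are mutually inverse. Starting from a principal bundle $G\action P\to N$, the orbits equal the fibers of $q$, so as a set $P/G=N$; because $q$ is a surjective submersion in both the original and the constructed structures, the uniqueness clause of Proposition \ref{anchinje} (or equivalently, of Godement's criterion \ref{godement}) forces the manifold structures to agree. Conversely, passing from a free proper action through its principal bundle $P\to P/G$ and back recovers the same action by construction. The only subtle point, and the one I expect to require the most care, is the properness argument in the forward direction: specifically, recognizing $\iota$ as a base-change of the diagonal so as to avoid any direct manipulation of sequences in $G\times_M P$.
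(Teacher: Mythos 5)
Your argument is correct and follows essentially the same route as the paper, which establishes the converse direction exactly as you do via Proposition \ref{anchinje} and simply asserts the forward direction. Your factorization of the action map through $P\times_N P$ as a base-change of the (proper, since closed) diagonal of $N$ is a clean way to supply the properness verification that the paper leaves implicit.
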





A {\bf map of bundles} $\phi:(G\action P' \to N')\to (G\action P\to N)$ is a smooth map $\phi:P'\to P$ compatible with the actions and the submersions.

Given $G\action P\to N$ a principal bundle and $N'\to N$ a smooth map, we define the {\bf pullback bundle} by
$$G\action (P\times_N N')\to N' \qquad \theta'_g(x,y)=(\theta_g(x),y) \quad q'(x,y)=y$$
which is also principal. It is easy to see that the canonical projection $P\times_N N'\to P$ is a map of bundles. Conversely, every map of principal bundles turns out to be a pullback.

\begin{proposition}\label{mapspbun}
A map $\phi:(G\action P' \to N')\to (G\action P\to N)$ of principal bundles induces an isomorphism of bundles
$$(G\action P'\to N')\cong (G\action (P\times_{N}N')\to N')$$
\end{proposition}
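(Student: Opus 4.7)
The approach is to write down the obvious candidate map and verify that it is an isomorphism. The map $\phi$ is compatible with the submersions, so there is a unique induced map $\bar\phi:N'\to N$ with $q\circ\phi = \bar\phi\circ q'$. I would define
$$\psi: P' \to P\times_N N', \qquad x\mapsto(\phi(x),q'(x)),$$
which is well-defined since $q(\phi(x))=\bar\phi(q'(x))$. A short verification shows that $\psi$ is a smooth map of principal $G$-bundles over $N'$: moment-map compatibility and equivariance are inherited from $\phi$ (using the formula $\theta'_g(y,n')=(\theta_g(y),n')$ for the pullback action), and $\psi$ intertwines $q'$ with the projection $P\times_N N'\to N'$ by construction.

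Next I would check that $\psi$ is bijective by working fiberwise over $N'$. Because both bundles are principal, for every $n'\in N'$ the fiber $(q')^{-1}(n')$ is a single $G$-orbit in $P'$, and the fiber of $P\times_N N'$ over $n'$ is $q^{-1}(\bar\phi(n'))\times\{n'\}$, likewise a single $G$-orbit. The map $\psi$ restricts to a $G$-equivariant map between these two orbits, and since both actions are free (injectivity) and transitive on the respective orbit (surjectivity), it is a bijection on each fiber: concretely, given $(y,n')$, choose any $x\in(q')^{-1}(n')$, note that $\phi(x)$ and $y$ lie in the same $G$-orbit $q^{-1}(\bar\phi(n'))$, and take the unique $g$ with $y=\theta_g(\phi(x))$; then $\psi(\theta_g(x))=(y,n')$.

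It remains to upgrade this to a diffeomorphism, which is the main obstacle. I would argue locally: by the structure theorem for submersions (Proposition \ref{strusubm}), around any $n'_0\in N'$ there is a local section $\sigma:U\to P'$ of $q'$. Since $P\to N$ is principal, the canonical map $\xi_P:G\times_M P\to P\times_N P$ is a diffeomorphism. Thus for $(y,n')\in P\times_N N'$ with $n'\in U$, the unique arrow $g\in G$ satisfying $y=\theta_g(\phi(\sigma(n')))$ is a smooth function of $(y,n')$, obtained by feeding $(y,\phi(\sigma(n')))$ into $\xi_P^{-1}$ and projecting. Then $\psi^{-1}(y,n')=\theta_g(\sigma(n'))$ is smooth on $q^{-1}(\bar\phi(U))\times_N U$, which gives smoothness of $\psi^{-1}$ globally since $n'_0$ was arbitrary. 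The whole argument reduces the claim to the defining property of principal bundles, namely the invertibility of $\xi$.
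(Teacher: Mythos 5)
Your proof is correct and follows essentially the same route as the paper: both reduce the statement to the fact that a principal bundle is free and transitive along the fibers of its structural submersion, which pins down a unique, smoothly varying arrow carrying one point of a fiber to another. The paper phrases this by identifying each fiber with an $s$-fiber $G(-,x)$ via a chosen basepoint, whereas you extract the same arrow from the inverse of the canonical map $\xi_P:G\times_M P\to P\times_N P$ together with a local section, which makes the smoothness of the inverse (left implicit in the paper) fully explicit.
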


\begin{proof}
Again we can imitate the Lie group case. 
It is enough to show that $\phi$ gives diffeomorphisms between the fibers $P'_{a'}\xto\cong P_a$, where $a'\in N'$ and $\phi(a')=a\in N$.
Choosing $u'\in P'_{a'}$ and calling $u=\phi(u')\in P_a$, we can identify
$$G(-,x)\cong P'_{a'} \quad g\mapsto \theta'_g(u') \quad \text{ and } \quad G(-,x)\cong P_a\quad  g\mapsto \theta_g(u)$$
and under these identifications, the map is just the identity, from where the result is clear.
\end{proof}


Given $G\toto M$ a Lie groupoid, the {\bf unit principal bundle} $G\action G\to M$ is defined by the  action $\theta_g(g')=gg'$ and the quotient map $q(g)=s(g)$. We say that a principal bundle $G\action P\to N$ is {\bf trivial} if it is the pullback of the unit bundle along some map $N\to M$. Note that a trivial principal bundle need not to be a trivial map.

Any principal bundle $G\action P\to N$ is locally trivial. In fact, if $U\subset N$ is a small open and $\sigma:U\to P$ is a local section of $q$, then there is an isomorphism between the restriction to $U$ and the trivial bundle induce by $t\sigma$.
$$(G\action G\times_M U \to U)\cong(G\action q^{-1}(U)\to U) \qquad (g,u)\mapsto g\sigma(u)$$
This leads to a cocycle description of principal bundles, completely analog to the group case (see \ref{cocycle}).
In particular, a principal bundle is trivial if and only if it admits a global section.

\begin{remark}\label{gauge}
Given $G\action P\to N$ a principal bundle such that $P\to M$ is a submersion, we can define the diagonal action $(G\toto M)\action(P\times_M P\to M)$, which is also free and proper. The structural maps of the submersion groupoid $P\times_M P\toto P$ are equivariant, thus they induce maps in the quotients $P\times_M^G P=(P\times_M P)/G$ and $P/G=N$, defining a new Lie groupoid, the {\bf gauge groupoid}.
$$G\action P\to N \quad \leadsto \quad 
(P\times_M^G P \toto N)$$
This construction generalizes both the one presented in \ref{examsubm} and that in \ref{exampbun}.
\end{remark}


\section{Equivalences}\label{section-equivalences}

We start this section by discussing isomorphisms between Lie groupoid maps.
Then we deal with Morita maps and provide an original characterization for them.
After that we make a short digression on homotopy pullbacks, which play an important role hereafter. 
We define equivalent groupoids and generalized maps by using Morita maps, and explain the relation of this approach to that of principal bibundles. 
Finally, we introduce differentiable stacks, showing that a Lie groupoid is essentially the same as a presentation for such a stack.

Some references for this material are \cite{mm} and \cite{mrcun}.

\subsection{Isomorphisms of maps}

The category of Lie groupoids and maps can be enriched over groupoids, namely there are isomorphisms between maps, and many times it is worth identifying isomorphic maps, and considering diagrams that  commute up to isomorphisms.

\bigskip

Given $\phi_1,\phi_2:(G\toto M)\to (G'\toto M')$ maps of Lie groupoids, an {\bf isomorphism} $\alpha:\phi_1\cong\phi_2$ consists of a smooth map $\alpha:M\to G'$
assigning to each object $x$ in $M$ an arrow $\phi_2(x)\xfrom{\alpha(x)}\phi_1(x)$ in $G'$
such that $\alpha(y)\phi_1(g)=\phi_2(g)\alpha(x)$ for all $y \xfrom g x$.

\begin{example}\
 
\begin{itemize}
 \item Two maps between manifolds are isomorphic if and only if they are equal.
 \item Two maps between Lie groups $\phi_1,\phi_2:(G\toto \ast)\to (G'\toto \ast)$ are isomorphic if and only if they differ by an inner automorphisms of $G'$.
 \item Two maps between submersion groupoids  are isomorphic if and only if they induce the same map in the orbit manifolds (cf. \ref{anchinje}).
$$\phi_1\cong\phi_2:(G\toto M)\to (G'\toto M') \quad \iff \quad (\phi_1)_*=(\phi_2)_*:M/G\to M'/G'$$
\end{itemize}
\end{example}

For the sake of simplicity we will identify two maps if they are isomorphic, and do not pay attention to the automorphisms of a given map.
We denote the set of isomorphism classes of maps $G\to G'$ by ${\rm Maps}(G,G')/_\cong$ 
and the category of Lie groupoids and isomorphism classes of maps by
$$\{\text{Lie Groupoids}\}/_{\simeq}$$
A map which is invertible up to isomorphism is called a {\bf categorical equivalence}, and an inverse up to isomorphism is called a {\bf quasi-inverse}. We use the notation $\simeq$ for categorical equivalences and keep $\cong$ for the isomorphisms.

\bigskip

Given $G\toto M$, its {\bf groupoid of arrows} $G^I=(G\times_M G\times_M G\toto G)$
is the Lie groupoid whose objects are the arrows of $G$ and whose arrows are commutative squares, or equivalently chains of three composable arrows.
$$\begin{matrix}
\xymatrix{
y \ar[d]_{g'h^{-1}}& x \ar[d]^{h^{-1}g} \ar[l]^{g}  \\
y' & x' \ar[l]^{g'} \ar[lu]^h}
\end{matrix} \qquad \longleftrightarrow \qquad
(y' \xfrom{g'} x') \xfrom{(g'h^{-1},h,h^{-1}g)}(y\xfrom g x)$$
With these definitions we can regard the unit, source and target of $G$ as Lie groupoid maps
$u:G\to G^I$ and $s,t:G^I\to G$.
There is a tautological isomorphism $s\cong t$ given by the identity $G\to G$, which is {\it universal}: An isomorphism $\alpha:\phi_1\cong\phi_2:G'\to G$ turns out to be the same as a map $\tilde\alpha:G' \to G^I$ such that $\phi_1=s\tilde\alpha$ and $\phi_2=t\tilde\alpha$.

\bigskip

We have associated to a Lie groupoid $G\toto M$ its orbit space $M/G$ and its normal representations $G_x\action N_xO$, $x\in M$. These constructions are functorial, and behave well with respect to isomorphisms of maps.

\begin{proposition}\label{behavior.isomorphic.maps}
If $\alpha:\phi_1\cong\phi_2:(G\toto M)\to(G'\toto M')$ then $\phi_1,\phi_2$ induce the same map
$(\phi_1)_*=(\phi_2)_*:M/G\to M'/G'$ between the orbit spaces, and there are commutative triangles between the normal representations
%
$$\xymatrix@C=0pt{
& G_x\action N_xO \ar[dl]_{(\phi_1)_*} \ar[dr]^{(\phi_2)_*} & \\
G'_{\phi_1(x)}\action N_{\phi_1(x)}O' \ar[rr]_{\eta'_{\alpha(x)}} & & G'_{\phi_2(x)}\action N_{\phi_2(x)}O'}$$
\end{proposition}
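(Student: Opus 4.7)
The plan is to derive both claims directly from the defining naturality relation $\alpha(y)\phi_1(g)=\phi_2(g)\alpha(x)$ for every $y\xfrom{g}x$. For the orbit-space equality, fix $x\in M$: the arrow $\alpha(x)$ has source $\phi_1(x)$ and target $\phi_2(x)$, so these two points lie in a common $G'$-orbit. Hence $(\phi_1)_*[x]=[\phi_1(x)]=[\phi_2(x)]=(\phi_2)_*[x]$, and since every class in $M/G$ is of the form $[x]$ this yields $(\phi_1)_*=(\phi_2)_*:M/G\to M'/G'$.

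For the triangles between normal representations I would split the check into the two constituents of a morphism of Lie group representations: the homomorphism on groups and the linear map on vector spaces. On the isotropy, the map $\eta'_{\alpha(x)}:G'_{\phi_1(x)}\to G'_{\phi_2(x)}$ is the conjugation $h\mapsto\alpha(x)\,h\,\alpha(x)^{-1}$ (this being how any arrow of a groupoid acts on its isotropy). Specializing the naturality relation to $y=x$ and $g\in G_x$ gives $\phi_2(g)=\alpha(x)\phi_1(g)\alpha(x)^{-1}$, which is exactly the required identity $(\phi_2)_*(g)=\eta'_{\alpha(x)}((\phi_1)_*(g))$.

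For the vector-bundle part I would exploit the geometric description of the normal representation recalled in Section 3.4. Given $v\in N_xO$, lift it to some $\tilde v\in T_xM$ and pick a curve $\gamma$ in $M$ with $\gamma(0)=x$ and $\gamma'(0)=\tilde v$; then $(\phi_i)_*(v)$ is represented by $(\phi_i\circ\gamma)'(0)$. The key observation is that $\alpha\circ\gamma$ is a smooth curve in $G'$ starting at $\alpha(x)$ and satisfying $s\circ(\alpha\gamma)=\phi_1\circ\gamma$ and $t\circ(\alpha\gamma)=\phi_2\circ\gamma$; so by the very definition of $\eta'$ one has $\eta'_{\alpha(x)}((\phi_1)_*(v))=[(t\circ\alpha\gamma)'(0)]=[(\phi_2\circ\gamma)'(0)]=(\phi_2)_*(v)$, closing the triangle. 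The main obstacle is purely notational: there is no technical difficulty because both the orbit-space functoriality and the curve description of $\eta$ translate the algebraic naturality of $\alpha$ directly into the required diagrams. One could alternatively factor $\alpha$ as a groupoid map $G\to(G')^I$ and invoke the functoriality of the constructions under the tautological isomorphism $s\cong t$, but the direct verification above is more transparent.
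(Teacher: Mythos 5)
Your proof is correct and is precisely the ``straightforward'' direct verification the paper leaves to the reader: the orbit-space claim follows because $\alpha(x)$ is an arrow from $\phi_1(x)$ to $\phi_2(x)$, the isotropy triangle is the naturality relation specialized to $g\in G_x$, and the normal-bundle triangle follows from the curve description of $\eta'$ applied to the lift $\alpha\circ\gamma$. You also note the paper's suggested reduction to the universal isomorphism $s\cong t:G^I\to G$ as an alternative, so nothing is missing.
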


The proof is straightforward. Note that it is enough to prove it for the universal isomorphism $s\cong t:G^I\to G$.




\subsection{Morita maps}

A Lie groupoid $G\toto M$ can be regarded as a presentation for a differentiable stack, which we will denote by $[M/G]$.
In order to formalize this we use {\it Morita maps}.
Intuitively, a Morita map is a map of groupoids inducing isomorphisms between the underlying stacks.

\bigskip

Let $\phi:(G\toto M)\to (G'\toto M')$ be a map between Lie groupoids. Then $\phi$ is {\bf fully faithful} if it induces a good pullback of manifolds between the anchors,
$$\xymatrix{
G \ar[r]^\phi \ar[d]_{\rho} \ar@{}[dr]|(.7){\lefthalfcap} & G' \ar[d]^{\rho'}\\
M\times M \ar[r]^{\phi\times\phi} & M'\times M'}$$
and $\phi$ is {\bf essentially surjective} if the following map of manifolds is a surjective submersion.
$$ t \pr_1: G'\times_{M'} M \to M' \qquad (x'\xfrom{g'}\phi(x),x) \mapsto x'$$
We say that $\phi$ is a {\bf Morita map} if it is both fully faithful and essentially surjective. We use the notation $\sim$ for Morita maps.

\begin{example}\label{example.weak.equivalence}\

 \begin{itemize}
  \item A map between manifolds is fully faithful if and only if it is an injective immersion, and it is essentially surjective if and only if it is a surjective submersion. Thus, in this case, a Morita map is the same as a diffeomorphism.
  \item If $G$ is transitive, then any map $G'\to G$ is essentially surjective.
  \item Given $G\toto M$ and $A\subset M$ such that the restriction $G_A\toto A$ is well-defined (cf. \ref{section.normal.representation}), the inclusion $(G_A\toto A)\to (G\toto M)$ is fully faithful, and it is essentially surjective if and only if $A$ intersects transversally every orbit.
  \item If $O$ is an orbifold and $\U,\U'$ are numerable atlases of $O$ such that $\U$ refines $\U'$, then a choice of inclusions leads to a Lie groupoid map $(G\toto M)\to (G'\toto M')$ between the induced Lie groupoids (cf. \ref{example.orbifolds}). This map is Morita (cf. \ref{charequi}, see also \cite{mm}).
 \end{itemize}
\end{example}

Every isomorphism is a categorical equivalences, and every categorical equivalence is a Morita map. Actually, if two maps are isomorphic, then one of them is Morita if and only if the other is so. This can be proved directly from the definitions or can be obtained as a corollary of \ref{behavior.isomorphic.maps} and \ref{charequi}.
Next example shows that the three notions are in fact different.

\begin{example}
Let $q:M\to N$ be a submersion and let $\phi$ be the induced map
$$\phi:(M\times_N M \toto M)\to(N\toto N)$$
Then $\phi$ is Morita if and only if $q$ is surjective, $\phi$ is a categorical equivalence if and only if $q$ admits a global section, and $\phi$ is an isomorphism if and only if $q$ is a diffeomorphism. 
\end{example}
%


\begin{remark}\label{categorical.monomorphism}
When working up to isomorphisms, a fully faithful map $\phi:G\to G'$ is a {\it categorical monomorphism}, namely for any $H$ it induces an injective map
$${\rm Maps}(H,G)/_\cong \to {\rm Maps}(H,G')/_\cong \quad \psi\mapsto \phi\psi$$
In fact, given $\psi_1,\psi_2:H\to G$, an isomorphism $\alpha:\phi\psi_1\cong\phi\psi_2$ can be lifted to another $\tilde\alpha:\psi_1\cong\psi_2$ by the universal property of the pullback determined by the anchors.
\end{remark}
%

\begin{remark}
Our definition of fully faithful maps slightly differs from the one in the literature  (cf. eg. \cite{mm}), for we are asking for the pullback to be good. As an example, the map $\R\to\R$, $x\mapsto x^3$, is not fully faithful for us, while it is for the other definition. Nevertheless, when the map is essentially surjective, then $\rho'$ and $\phi\times\phi$ are transverse and both definitions for Morita maps agree.
\end{remark}


\subsection{A characterization of Morita maps}

The differentiable stack $[M/G]$ associated to a Lie groupoid $G\toto M$ consists of the orbit space $M/G$ endowed with some smooth data, encoded in the normal representations $G_x\action N_xO$. These representations play the role of tangent spaces of $[M/G]$. 
Next criterion can be seen as a formulation of this idea.

\begin{theorem}\label{charequi}
A map $\phi:(G\toto M)\to (G'\toto M')$ is Morita if and only if it yields a homeomorphism between the orbit spaces and isomorphisms between the normal representations.
$$\phi:(G\toto M)\xto\sim(G'\toto M') \iff 
\begin{cases}
\phi_*:M/G\xto\cong M'/G' \\
\phi_x:(G_x\action N_xO)\xto\cong (G'_{x'}\action N_{x'}O') \text{ for all } x
\end{cases}$$
\end{theorem}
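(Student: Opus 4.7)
The plan is to prove both implications by translating between the categorical data (fully faithfulness and essential surjectivity) and the infinitesimal data (orbits and normal representations) via the anchor differential description of Proposition \ref{diffanch}.

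$(\Rightarrow)$ Suppose $\phi$ is a weak equivalence. Surjectivity of $\phi_*:M/G\to M'/G'$ is immediate from essential surjectivity, while injectivity follows from lifting arrows along the good pullback of anchors. Openness comes from the identity $q'\circ(t\pr_1)=\phi_*\circ q\circ\pr_2$ on $G'\times_{M'}M$, whose outer compositions are both open surjections. The isotropy isomorphism $\phi_*:G_x\xto\cong G'_{\phi(x)}$ arises as the fiber of $\phi$ over $(x,x)$ in the good pullback of anchors. For the induced map $d\phi:N_xO\to N_{\phi(x)}O'$: surjectivity uses essential surjectivity at $(1_{\phi(x)},x)$, where any $w\in T_{\phi(x)}M'$ is hit by $d(t\pr_1)(\xi,v)$ with $ds'(\xi)=d\phi(v)$, yielding $w\equiv d\phi(v)\pmod{T_{\phi(x)}O'}$; injectivity uses the tangent-level good pullback to lift any $v\in T_xM$ with $d\phi(v)\in T_{\phi(x)}O'$ to a vector in $T_{1_x}G(-,x)$ with $dt$-image $v$, so $v\in T_xO$. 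Equivariance is automatic from functoriality.

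$(\Leftarrow)$ Assume the hypotheses on orbits and normal representations. For essential surjectivity: pointwise surjection of $t\pr_1$ follows from the orbit homeomorphism, and its submersion property at $(g',x)$ reduces, after translating inside the $t'$-fiber of $g'$, to the identity $T_{\phi(x)}M'=d\phi(T_xM)+T_{\phi(x)}O'$, equivalent to the surjectivity of $d\phi$ on normal bundles. Applied at all pairs $(x,y)$, the same identity makes $\phi\times\phi$ and $\rho'$ transverse, so by Proposition \ref{pullback of manifolds} the pullback $P=(M\times M)\times_{M'\times M'}G'$ exists and is good. The canonical comparison map $\Phi:G\to P$ is a set-theoretic bijection: injectivity uses the isotropy isomorphism applied to $g_1g_2^{-1}\in G_y$, and surjectivity first lifts a pair $(x,y)$ via the orbit homeomorphism to some $g_0\in G$ and then corrects by an isotropy element supplied by $\phi_*:G_y\cong G'_{\phi(y)}$.

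To promote $\Phi$ to a diffeomorphism, observe that each restriction $\phi:G(y,x)\to G'(\phi(y),\phi(x))$ is a diffeomorphism, since after left-translation by any $g_0\in G(y,x)$ it reduces to the Lie group isomorphism $G_x\cong G'_{\phi(x)}$. By Proposition \ref{diffanch}, $\ker d\Phi$ at $g$ equals the intersection of $T_gG(y,x)$ with $\ker d\phi$, which is zero. A dimension count using Proposition \ref{diffanch} on both $G$ and $G'$, together with the dimension of the good pullback $P$, shows $\dim T_gG=\dim T_{\Phi(g)}P$, so $d\Phi$ is an isomorphism. Hence $\Phi$ is a local diffeomorphism, and being a bijection it is a diffeomorphism, witnessing the good pullback of anchors. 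The main obstacle is this reverse direction, where all three hypotheses act in concert: the orbit homeomorphism yields the pointwise bijection and surjectivity of $t\pr_1$, the normal representation isomorphism drives tangent-level transversality and the submersion property, and the isotropy group isomorphism is essential both for the tangent injectivity of $\Phi$ and for the smoothness of its inverse restricted to the arrow manifolds $G(y,x)$.
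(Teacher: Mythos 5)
Your proof is correct and follows essentially the same route as the paper: both directions are driven by the description of $\ker d\rho$ and $\im d\rho$ from Proposition \ref{diffanch}, with the converse established by showing the comparison map into the transverse pullback $(M\times M)\times_{M'\times M'}G'$ is a diffeomorphism. The only cosmetic difference is at the very end, where you conclude invertibility of $d\Phi$ by injectivity plus a dimension count, while the paper runs a five-lemma diagram chase between the two exact sequences of \ref{diffanch}; these are interchangeable.
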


\begin{proof}
Let us write $x'=\phi(x)$, $g'=\phi(g)$, and so on.

\smallskip

\noindent{\it First step:} Assume that $\phi$ is fully faithful. The pullback involving the anchors induces diffeomorphisms between their fibers, namely 
$$G(y,x)\xto\cong G'(y',x') \qquad \text{ for all } x,y\in M$$
This implies that we have isomorphisms on the isotropy groups $G_x\xto\cong G'_{x'}$, and that the induced map $\phi_*:M/G\to M'/G'$ between the orbit spaces is injective.
Moreover, the map $\phi_*:N_xO\to N_{x'}O'$ is a monomorphism for all $x$.
In fact, if $v\in T_xM$ and $d_x\phi(v)\in T_{x'}O'$, then $(d_x\phi(v),0)\in{\rm Im}( d_{u(x)}\rho')$ (cf. \ref{diffanch}), and since the following is a pullback of vector spaces,
$$\xymatrix{
T_{u(x)}G \ar[r] \ar[d]_{d_{u(x)}\rho} \ar@{}[dr]|(.7){\lefthalfcap} & T_{u(x')}G' \ar[d]^{d_{u(x')}\rho'}\\
T_xM\times T_xM \ar[r] & T_{x'}M'\times T_{x'}M'}$$
it turns out that $(v,0)\in{\rm Im}( d_{u(x)}\rho)$, hence $v\in T_xO$.

\smallskip

\noindent{\it Second step:}
It is easy to see that $t\pr_1:G'\times_{M'}M\to M'$ is surjective if and only if $\phi_*:M/G\to M/G'$ is so. 
Now consider the following diagram of vector spaces.
$$\xymatrix{ T_{g'}(G'\times_{M'}M) \ar[d] \ar[r] \ar@{}[dr]|(.7){\lefthalfcap} & T_{g'}G' \ar[r]^{dt} \ar[d]_{ds} & T_{y'}M' \ar[d]^{\eta_{g'^{-1}}\pi_{y'}} \\
T_xM \ar[r] & T_{x'}M' \ar[r]^{\pi_{x'}} & N_{x'}O' }$$
It follows by diagram chasing and \ref{diffanch} that
the upper composition is an epimorphism if and only if the lower one is so. We can conclude that $t\pr_1$ is a submersion if and only if $\phi_*:N_xO\to N_{x'}O'$ is an epimorphism for all $x$.

Note that if $\phi$ is essentially surjective then $\phi_*:M/G\to M'/G'$ has to be open, for the top and the right arrow in next commutative square are so.
$$\xymatrix{G'\times_{M'}M \ar[r]^{t\pr_1} \ar[d]_{q\pr_2} & M' \ar[d]_{q'} \\ M/G\ar[r]_{\phi_*} & M'/G'} $$

\smallskip

\noindent{\it Third step:} It only remains to show that the criterion implies that $\phi$ is fully faithful. From the isomorphisms $G_x\xto\cong G'_{x'}$ and the homeomorphism $M/G\xto\cong M'/G'$ it follows that the anchor maps define a set-theoretical pullback, and that the maps $G(y,x)\xto\cong G'(y',x')$ are diffeomorphisms.
Since the maps $N_xO\to N_{x'}O'$ are onto, the following are transverse (cf. \ref{diffanch}).
$$\phi\times\phi:M\times M \to M'\times M' \qquad \rho':G'\to M'\times M'$$
Thus their pullback exists and it is good, and we get a map $G\to (M\times M)\times_{M'\times M'}G'$. It is bijective for both are set-theoretical pullbacks. Its differential at each point is invertible as can be proved by diagram chasing between the following two exact sequences (cf. \ref{diffanch}).
$$\xymatrix{
0 \ar[r] & T_gG(y,x) \ar[d]^\cong \ar[r] & T_gG \ar[r] \ar[d] & T_yM\times T_xM \ar[r] \ar[d] & N_yO \ar[r] \ar[d]^\cong & 0  \\ 
0 \ar[r] & T_{g'}G'(y',x') \ar[r] & T_{g'}G' \ar[r] & T_{y'}M'\times T_{x'}M' \ar[r] & N_{y'}O' \ar[r]  & 0  }$$
We conclude that $G \cong (M\times M)\times_{M'\times M'}G'$ and hence $\phi$ is fully faithful.
\end{proof}

By using this characterization we can easily get the following saturation properties of the class of Morita maps.

\begin{corollary}\label{wesatura}\

\begin{itemize}
 \item If two maps are isomorphic and one of them is Morita, then so does the other (cf. \ref{behavior.isomorphic.maps}).
 \item In a commutative triangle of Lie groupoid maps, if two out of the three maps are Morita, then the third also is.
 \item If $\phi$ is such that there exist $\psi_1,\psi_2$ such that $\phi\psi_1$ and $\psi_2\phi$ are Morita, then $\phi$ is Morita as well.
\end{itemize}
\end{corollary}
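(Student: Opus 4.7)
The plan is to reduce all three assertions to Theorem \ref{charequi}, which identifies weak equivalences with those maps that induce a homeomorphism on orbit spaces and isomorphisms on normal representations at every base point. Each item then follows from elementary properties of these two invariants.

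For (1), Proposition \ref{behavior.isomorphic.maps} shows that isomorphic maps $\phi_1 \cong \phi_2$ induce the same map on orbit spaces, and induce morphisms of normal representations at each $x$ related by post-composition with the isomorphism $\eta'_{\alpha(x)}$. Hence the criterion of \ref{charequi} is satisfied by $\phi_1$ if and only if it is satisfied by $\phi_2$. For (2), functoriality of the orbit space and of the normal representations turns any commutative triangle $\gamma = \beta\alpha$ of groupoid maps into commutative triangles on orbit spaces and, at each basepoint, on normal representations; since homeomorphisms of topological spaces and isomorphisms of Lie group representations each satisfy 2-out-of-3, the conclusion follows again by \ref{charequi}.

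For (3), I would verify the two criteria of \ref{charequi} for $\phi$ directly. On orbit spaces, the hypothesis that $\phi_*\psi_{1*}$ and $\psi_{2*}\phi_*$ are homeomorphisms equips $\phi_*$ with a continuous right inverse $\psi_{1*}(\phi_*\psi_{1*})^{-1}$ and a continuous left inverse $(\psi_{2*}\phi_*)^{-1}\psi_{2*}$; the two inverses must coincide and identify $\phi_*$ as a homeomorphism. On normal representations at a chosen point $x$, the fact that $\psi_{2,\phi(x)}\circ\phi_x$ is an isomorphism forces $\phi_x$ to be injective. For surjectivity, observe that $\psi_{1*}$ is surjective on orbits (as a factor of a homeomorphism), so one may pick $y$ with $\psi_1(y)$ in the orbit of $x$; then $\phi_{\psi_1(y)}\circ\psi_{1,y}$ is an isomorphism, yielding that $\phi_{\psi_1(y)}$ is surjective, and transporting along the canonical isomorphism between the normal representations at $x$ and $\psi_1(y)$ (conjugation by any connecting arrow) delivers surjectivity of $\phi_x$ as well.

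The subtle point, and the main obstacle in the argument, is precisely this basepoint shift in (3): injectivity of $\phi_x$ comes from data at $x$ itself, while surjectivity arises naturally from data at $\psi_1(y)$, a possibly different point in the same orbit. Reconciling the two requires invoking the functoriality of normal representations along an arrow within an orbit, that is, the observation noted at the end of \ref{section.normal.representation} that two points of the same orbit yield canonically isomorphic normal representations.
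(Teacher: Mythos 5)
Your proposal is correct and takes essentially the same route the paper intends: the corollary is stated there as an immediate consequence of the characterization \ref{charequi}, and your three reductions (item (1) via \ref{behavior.isomorphic.maps}, item (2) via two-out-of-three for homeomorphisms and for isomorphisms of representations, and item (3) via the basepoint shift along an arrow in the orbit) are exactly that argument made explicit. One small imprecision to fix in (3): surjectivity of $(\psi_1)_*$ on orbit spaces does not follow merely from its being a factor of a homeomorphism (the first factor is in general only injective); it follows because you have already shown $\phi_*$ to be bijective, so that $(\psi_1)_*=\phi_*^{-1}\circ\bigl(\phi_*\circ(\psi_1)_*\bigr)$ is a composition of bijections.
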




\subsection{Homotopy pullbacks}

The pullback of two Lie groupoid maps $\phi_1:(G_1\toto M_1)\to( G\toto M)$ and $\phi_2:(G_2\toto M_2)\to(G\toto M)$, if it exists, consists of the Lie groupoid whose objects and arrows
$$G_1\times_G G_2\toto M_1\times_M M_2$$
are the corresponding pullbacks of manifolds, and whose structural maps are
induced by those of $G_1$ and $G_2$.
We say that $\phi_1$ and $\phi_2$ are {\bf transverse} if they are so on objects and on arrows.

\begin{proposition}\label{transversality.groupoids}
If $\phi_1$ and $\phi_2$ are transverse then their pullback exists.
\end{proposition}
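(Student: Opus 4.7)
The plan is to construct the pullback groupoid $G_1\times_G G_2\toto M_1\times_M M_2$ explicitly and verify it satisfies the universal property.

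First, transversality on objects and on arrows together with Proposition~\ref{pullback of manifolds} produce the spaces of objects $M_1\times_M M_2$ and of arrows $G_1\times_G G_2$ as good pullbacks of manifolds, i.e.\ as closed embedded submanifolds of $M_1\times M_2$ and $G_1\times G_2$ respectively. Since each $\phi_i$ is a groupoid map, the structural maps $s_i,t_i,u_i,m_i,i_i$ descend to smooth maps between the pullbacks via the universal properties, e.g.\ $\tilde s(g_1,g_2)=(s_1g_1,s_2g_2)$ and $\tilde m((g_1',g_2'),(g_1,g_2))=(g_1'g_1,g_2'g_2)$, and the groupoid axioms transfer from those of $G_1$ and $G_2$.

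The main technical point is that $\tilde s\colon G_1\times_G G_2\to M_1\times_M M_2$ is a submersion. At a point $p=(g_1,g_2)$ with $\phi_i(g_i)=g$, $x_i=s_i(g_i)$ and $x'=s(g)$, the good pullbacks produce two short exact sequences of tangent spaces connected by the vertical surjections $ds_1\oplus ds_2$ and $ds$:
$$0\to T_p(G_1\times_G G_2)\to T_{g_1}G_1\oplus T_{g_2}G_2\xto{d\phi_1-d\phi_2}T_gG\to 0,$$
$$0\to T_x(M_1\times_M M_2)\to T_{x_1}M_1\oplus T_{x_2}M_2\xto{d\phi_1-d\phi_2}T_{x'}M\to 0.$$
A snake-lemma diagram chase reduces surjectivity of $d\tilde s$ to the fibrewise identity $d\phi_1(\ker ds_1|_{g_1})+d\phi_2(\ker ds_2|_{g_2})=\ker ds|_g$. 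To establish this I would use right translation: because $\phi_i$ is multiplicative, $\phi_i\circ R_{g_i}=R_g\circ\phi_i$ and $R_{g_i}$ restricts to a diffeomorphism between source fibres, so the identity at $(g_1,g_2)$ is equivalent to the one at the units $(u_1(y_1),u_2(y_2))$ with $y_i=t_i(g_i)$. At a unit the tangent space splits canonically as $T_{u_i(y_i)}G_i=du_i(T_{y_i}M_i)\oplus\ker ds_i|_{u_i(y_i)}$, a splitting preserved by $d\phi_i$; projecting the arrow transversality at $(u_1y_1,u_2y_2)$ onto the two summands yields both object transversality at $(y_1,y_2)$ and the desired fibrewise equality.

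An analogous argument (or an appeal to the inverse map, which swaps $s$ and $t$) shows $\tilde t$ is a submersion, so the candidate $G_1\times_G G_2\toto M_1\times_M M_2$ is a genuine Lie groupoid. The universal property in the category of Lie groupoids is then immediate from those of the underlying manifold pullbacks applied at both the object and the arrow level. The only real obstacle is the submersion of $\tilde s$; the key insight is that the right-translation trick bootstraps the pointwise transversality hypothesis into the fibrewise transversality actually required by the snake-lemma chase.
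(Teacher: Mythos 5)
Your proof is correct and follows essentially the same route as the paper: form the good pullbacks of manifolds on objects and arrows, reduce everything to showing that the source map of the candidate groupoid is a submersion, and handle that by right-translating along source fibres to the units, where $s\circ u=\mathrm{id}$ makes the differential surjective. The only difference is bookkeeping: the paper shows $\ker d\tilde s$ has constant dimension (so the rank, maximal near the units, is maximal everywhere), whereas you run a snake-lemma chase identifying $\operatorname{coker} d\tilde s$ with the cokernel of $\ker ds_1\oplus\ker ds_2\to\ker ds$ and kill it at the units via the splitting $T_{u(y)}G=du(T_yM)\oplus\ker d_{u(y)}s$ --- both arguments are valid.
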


\begin{proof}
The pullbacks of manifolds $G'=G_1\times_G G_2$ and $M'=M_1\times_M M_2$ exist and are good (cf. \ref{pullback of manifolds}). The source, target and unit maps of $G_1$ and $G_2$ induce new maps $s',t':G'\to M'$ and $u':M'\to G'$ by the obvious formulas.
The key point is to show that $s':G'\to M'$ is a submersion.
Since $s'u'=\id_{M'}$ the differential of $s'$ is surjective near $u'(M')$.
We just need to prove that the dimension of
$$\ker d_{g'}s' = (T_{g'_1}G_1\times_{T_g G} T_{g'_2}G_2)\cap \ker d_{g'}(s_1,s_2)\subset T_{g'_1}G_1\times T_{g'_2}G_2$$
does not depend on $g'=(g_1',g_2')$.
Given $(y'_1,y'_2)\xfrom{(h'_1,h'_2)} (x'_1,x'_2)$ in $G'$, we have a diffeomorphism
$G_1(-,y'_1)\times G_2(-,y'_2)\cong G_1(-,x'_1)\times G_2(-,x'_2)$ defined by right multiplication by $h'$.
It is easy to see that its differential maps $\ker d_{g'}s'$ inside $\ker d_{g'h'}s'$. Since $g'$ and $h'$ are arbitrary we conclude that $\ker d_{g'}s'$ has constant dimension and hence $s'$ is a submersion. The rest is routine.
\end{proof}

\begin{remark}
A summary in pullbacks of Lie groupoids and algebroids will appear in \cite{bcd}.
Result \ref{transversality.groupoids} is stated in \cite[\S 5.3]{mm} without a proof.
In \cite[Prop. 2.4.14]{mkbook} it is proved that the pullback between a {\it fibration} and any map exists. This can be seen as a corollary of \ref{transversality.groupoids}, for it is immediate that a fibration is a surjective submersion on objects and arrows, and then it is transverse to any other map. 
\end{remark}

Homotopy pullbacks of Lie groupoids are an alternative for usual pullbacks that take consideration of the isomorphisms between maps. They play a relevant role in defining and composing generalized 	map of Lie groupoids. Next we provide their definitions and basic properties.  We suggest \cite{mm} as an alternative reference, where homotopy pullbacks appear with the name of {\it weak fibred products}.

\medskip

Given $\phi_1$ and $\phi_2$ as above, their {\bf homotopy pullback} $G_1\tilde\times_G G_2$  is defined as the pullback between the Lie groupoid maps
$(s,t):G^I \to G\times G$ and $\phi_1\times\phi_2:G_1\times G_2 \to G\times G$.
Its objects are triples $(x_1,\phi_1(x_1)\xfrom g\phi_2(x_2),x_2)$, and its arrows are triples $(k_1,k_2,k_3)$ as below.
$$\xymatrix{ x_1 \ar[d]_{k_1} & \phi_1(x_1) \ar[d]_{\phi_1(k_1)} & \phi_2(x_2) \ar[l]_{g} \ar[d]^{\phi_2(k_3)} & x_2 \ar[d]^{k_3} \\ y_1 & \phi_1(y_1)  & \phi_2(y_2) \ar[l]_{h} \ar[lu]_{k_2}
 & y_2}$$

Since the groupoid of arrows $G^I$ classifies isomorphisms of maps, the homotopy pullback $G_1\tilde\times_G G_2$ fits into a universal commutative square up to isomorphism.
$$\begin{matrix}\xymatrix{G_1\tilde\times_G G_2 \ar[d] \ar[r] \ar@{}[dr]|(.7){\lefthalfcap} & G_1\times G_2 \ar[d]^{\phi_1\times\phi_2}\\G^I \ar[r]_{(s,t)}  & G\times G}\end{matrix}
\quad \longleftrightarrow \quad
\begin{matrix}\xymatrix{ G_1\tilde\times_G G_2 \ar[d]_{\tilde\phi_2} \ar[r]^{\tilde\phi_1} \ar@{}[dr]|(.7){\tilde\lefthalfcap} & G_2 \ar[d]^{\phi_2}\\G_1 \ar[r]_{\phi_1}  & G}\end{matrix}$$
The universal property of the pullback translates into the following.

\begin{remark}\label{universal.property.homotopy.pullback}
Given  $\psi_1:H\to G_1$, $\psi_2:H\to G_2$ and an isomorphism $\phi_1\psi_1\cong\phi_2\psi_2$, there is a unique map $\psi:H\to G_1\tilde\times_G G_2$ such that $\psi_1=\tilde\phi_2 \psi$, $\psi_2=\tilde\phi_1\psi$ and the isomorphism is naturally induced by $\psi$.
\end{remark}
In particular we have that $G_1\tilde\times_G G_2$ is a pullback in 
$\{\text{Lie Groupoids}\}/_{\simeq}$, the category of groupoids and isomorphism classes of maps.
We say that $\tilde\phi_1$, $\tilde\phi_2$ are the {\bf base-changes} of $\phi_1$, $\phi_2$ respectively.

\medskip


We discuss finally the behavior of homotopy pullbacks with respect to Morita maps.
We say that $\phi:(G\toto M)\to (G'\toto M')$ is {\bf surjective} if $\phi:M\to M'$ is a surjective submersion. This implies essentially surjective, as it follows from the composition
$$M \cong M'\times_{M'} M \xto{(u,\id)} G'\times_{M'} M \xto{t\pr_1} M'$$
A {\bf surjective equivalence} is a map both surjective and fully faithful. In a surjective equivalence the map on arrows $G\to G'$ is also a surjective submersion.

\begin{proposition}\label{pbequiva}
Given $\phi_1$ and $\phi_2$ as above, if $\phi_1$ is Morita then the homotopy pullback $G_1\tilde\times_G G_2$ exists and the base-change $\tilde\phi_1$ is a surjective equivalence.
\end{proposition}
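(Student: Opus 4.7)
My plan is to construct the homotopy pullback $G_1\tilde\times_G G_2$ directly as a Lie groupoid whose object manifold is $M_1\times_M G\times_M M_2=G\times_{M\times M}(M_1\times M_2)$ and whose arrow manifold is $G_1\times_M G\times_M G_2$ (with the intermediate matchings of sources through $\phi_1,\phi_2$), with the obvious structural maps, and then to verify that $\tilde\phi_1\colon(k_1,g,k_3)\mapsto k_3$ is a surjective equivalence. The essential step for existence is that the object pullback is good, for which I would check transversality of $(t,s)\colon G\to M\times M$ and $\phi_1\times\phi_2\colon M_1\times M_2\to M\times M$. Given $(v,w)\in T_{t(g)}M\oplus T_{s(g)}M$, I first use that $s$ is a submersion to pick $\xi_0\in T_gG$ with $ds(\xi_0)=w$, reducing the task to finding $\eta\in\ker ds$ and $u_1\in T_{x_1}M_1$ with $dt(\eta)+d\phi_1(u_1)=v-dt(\xi_0)$. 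Since $dt(\ker ds)=T_{t(g)}O$, where $O$ is the orbit through $t(g)$ (because $t\colon G(-,s(g))\to O$ is a surjective submersion, cf.\ \ref{orbits&isotropy}), this reduces further to $T_{t(g)}O+d\phi_1(T_{x_1}M_1)=T_{t(g)}M$, which is exactly surjectivity of $\phi_1$ on normal representations and thus holds by Theorem~\ref{charequi}. The remaining pullbacks assembling the arrow manifold are automatically good, since they are taken along the submersions $s\colon G_i\to M_i$.

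To show $\tilde\phi_1$ is a surjective equivalence I would first check that its object map $(x_1,g,x_2)\mapsto x_2$ is a surjective submersion. Point-surjectivity follows from essential surjectivity of $\phi_1$: applied to $\phi_2(x_2)\in M$, it produces $(h,x_1)$ with $s(h)=\phi_1(x_1)$ and $t(h)=\phi_2(x_2)$, so $g=h^{-1}$ yields a preimage triple $(x_1,g,x_2)$. Submersivity is the linearized version of the same argument: given $u_2\in T_{x_2}M_2$, pick $\xi_0\in T_gG$ with $ds(\xi_0)=d\phi_2(u_2)$ and then adjust by $\eta\in\ker ds$ so that $dt(\xi_0+\eta)=d\phi_1(u_1)$ for an appropriate $u_1$, using surjectivity on normal representations exactly as above, to produce a valid tangent lift $(u_1,\xi_0+\eta,u_2)$.

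Fully faithfulness of $\tilde\phi_1$ amounts to showing that its anchor square is a good pullback. An arrow $(k_1,g,k_3)$ of $G_1\tilde\times_G G_2$ has source $(s(k_1),g,s(k_3))$ and target $(t(k_1),h,t(k_3))$ with $h=\phi_1(k_1)\,g\,\phi_2(k_3)^{-1}$ forced by the compatibility relation. Given $k_3\in G_2$ and matching source and target objects $(x_1,g,x_2)$, $(y_1,h,y_2)$, the equation $\phi_1(k_1)=h\,\phi_2(k_3)\,g^{-1}$ fixes the prescribed image of $k_1$ in $G$; fully faithfulness of $\phi_1$, i.e., its anchor square being a good pullback, singles out a unique $k_1\in G_1$ with the required source, target, and image, and yields the expected tangent-space isomorphism at the level of the anchor square of $\tilde\phi_1$.

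The main obstacle is the single transversality check at the object level underlying the good-pullback structure on $M_1\times_M G\times_M M_2$; once that is secured via Theorem~\ref{charequi}, the rest of the proof consists of routine verifications that transfer essential surjectivity and fully faithfulness of $\phi_1$ into the corresponding properties of $\tilde\phi_1$.
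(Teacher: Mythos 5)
Your proposal is correct and follows essentially the same route as the paper: existence of the homotopy pullback is reduced to transversality of $\phi_1\times\phi_2$ with the anchor of $G$, which holds because $\phi_1$ is surjective on normal representations (Theorem~\ref{charequi} together with \ref{diffanch}); surjectivity of $\tilde\phi_1$ comes from essential surjectivity of $\phi_1$; and fully faithfulness is obtained by using the multiplication of $G$ to identify the anchor pullback of $\tilde\phi_1$ with data built from the anchor pullback of $\phi_1$. The only differences are presentational --- you build the pullback groupoid and verify the tangent-level conditions by hand where the paper invokes \ref{transversality.groupoids} and a global rearrangement of coordinates --- and these do not affect correctness.
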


\begin{proof}[Sketch of proof]
The differential of $\phi_1$ is surjective in the direction normal to the orbits (cf. \ref{charequi}). Then the map $\phi^{ob}_1\times\phi^{ob}_2:M_1\times M_2\to M\times M$ is transverse to the anchor $\rho_G:G\to M\times M$ (cf. \ref{diffanch}). It follows from here that $\phi_1\times\phi_2$ is transverse to $(s,t)$ and hence the homotopy pullback 
$G_1\tilde\times_G G_2$ exists (cf. \ref{transversality.groupoids}).

The map $\tilde \phi_1$ is surjective because in the pullback of manifolds
$$\xymatrix{M_1\times_M G\times_M M_2 \ar[d]_{\rm pr} \ar[r]^(.67){\tilde\phi_1} \ar@{}[dr]|(.7){\lefthalfcap} & M_2 \ar[d]^{\phi_2}\\
M_1\times_M G \ar[r]_{s\pr_2}  & M}$$
the bottom arrow is a surjective submersion and therefore the upper one also is.
It remains to prove that $\tilde\phi_1$ is fully faithful. The pullback manifold between the anchor $\rho_2$ and the map $\tilde\phi_1\times\tilde\phi_1$ is 
$$\big((M_1\times_M G\times_M M_2)\times(M_1\times_M G\times_M M_2)\big)\times_{M_2}G_2 $$
By rearranging the coordinates and using the multiplication of $G$ this manifold is diffeomorphic to 
$$\big((M_1\times M_1)\times_M G\big)\times_M G\times_M G_2$$
Since $\phi_1$ is fully faithful we can replace $(M_1\times M_1)\times_M G \cong G_1$ and conclude that $\tilde\phi_1$ is fully faithful as well.
\end{proof}



\subsection{Equivalent groupoids and generalized maps}

Two Lie groupoids $G,G'$ are {\bf equivalent}, notation $G\sim G'$, if there is a third groupoid $H$ and Morita maps $H\xto\sim G$, $H\xto\sim G'$.
Equivalent groupoids have homeomorphic orbit spaces $M/G\cong M'/G'$, and for every pair of points $x\in M$, $x'\in M'$, whose classes are related by this homeomorphism, the corresponding normal representations $(G_x\action N_xO)\cong (G'_{x'}\action N_{x'}O')$ are isomorphic (cf. \ref{charequi}).

\begin{example}\

\begin{itemize}
  \item A Lie groupoid $G\toto M$ is equivalent to a manifold $N\toto N$ if and only if $G$ is a submersion groupoid with quotient $M/G\cong N$.
  \item A Lie groupoid $G\toto M$ is equivalent to a Lie group $H\toto\ast$ if and only if $G$ is transitive and the isotropy at a point is $H$.
  \item Given $O$ an orbifold and $\U_1$, $\U_2$ two numerable atlases, by picking a common refinement $\U$ we can see that the induced groupoids $G_1\toto M_1$ and $G_2\toto M_2$ are equivalent (cf. \ref{example.weak.equivalence}). Thus, to an orbifold $O$ we can associate a Lie groupoid $G(O)=(G\toto M)$, which is determined up to canonical equivalence. 
\end{itemize} 
\end{example}

A pair of Morita maps $H\xto\sim G$, $H\xto\sim G'$ is an example of a generalized map. Given $G$, $G'$ Lie groupoids, a {\bf generalized map} $\psi/\phi:G\dasharrow G'$ is defined by two maps
$\phi: H\xto\sim G$ and $\psi: H\to G'$ where the first is Morita.
$$\xymatrix{G & H \ar[l]^\sim_{\phi} \ar[r]^\psi & G'}$$
Two pairs define the same generalized map, $\psi_1/\phi_1 =\psi_2/\phi_2$, if there is a third pair $\psi_3/\phi_3$ and they all fit into a diagram commutative up to isomorphisms.
$$\xymatrix@R=10pt{
 G \ar@{=}[d]& H_1 \ar[l]^\sim_{\phi_1} \ar[r]^{\psi_1} & G' \ar@{=}[d]\\
G \ar@{=}[d]& H_3 \ar[u] \ar[d] \ar[r]^{\psi_3} \ar[l]^\sim_{\phi_3} & G' \ar@{=}[d]\\
G & H_2 \ar[l]^\sim_{\phi_2} \ar[r]^{\psi_2} & G'}$$
This is an equivalence relation on pairs $(\phi,\psi)$ as it can be proved by using homotopy pullbacks (cf. \ref{pbequiva}).
We denote by $H^1(G,G')$ the set of generalized maps $G \dashto G'$.
$$H^1(G,G')=\{\psi/\phi:G\dashto G'\}$$
It is easy to see that, after the identifications, this is in fact a set and not a proper class.

\begin{example}\label{example.generalize.maps}\

\begin{itemize}
\item
Generalized maps between manifolds $M\dashto M'$ are the same as usual smooth maps. 
$$H^1(M,M')= {\rm Maps}(M,M')$$
\item
Generalized maps between Lie groups $G\dashto G'$ are usual maps modulo inner automorphisms of $G'$. 
$$H^1(G,G')= {\rm Maps}(G,G')/_\cong$$
\item
Given a principal groupoid-bundle $G\action P\to N$, we can construct a generalized map $N\dashto G$ as follows (cf. \ref{section principal bundles}).
$$(N\toto N)\xfrom\sim(P\times_N P\toto P)\overset{\xi}{\cong}(G\times_MP\toto P)\to(G\toto M)$$
This construction sets a 1-1 correspondence  (cf. \ref{bibundles&generalizedmaps}).
$$H^1(N,G)\cong\{\text {principal $G$-bundles over $N$}\}$$
\item
We can identify orbifold maps $O\to O'$ and generalized maps between the induced groupoids (cf. \cite{mm}).
$$H^1(G(O),G(O'))= {\rm Maps}(O,O')$$
\end{itemize}
\end{example}

\begin{remark}
By playing with homotopy pullbacks (cf. \ref{pbequiva}) it is easy to see that every generalized map can be presented as a fraction $\psi/\phi$ where $\phi:H\xto\sim G$ is a surjective equivalence (see eg. \cite{mm}).
The same argument shows that an equivalence $G\sim G'$ can always be realized by two surjective equivalences $H\xto\sim G$, $H\xto\sim G'$.
\end{remark}



Generalized maps also admit a cocycle description.
Given a Lie groupoid $G\toto M$ and a numerable open covering $\U=\{U_i\}_i$ of $M$, denote $M_\U=\coprod_i U_i$ and 
$G_\U= \coprod_{i,j}G(U_i,U_j)$.
The structure of $G$ induce a new Lie groupoid $G_\U\toto M_\U$ and a surjective equivalence
$$\phi_\U:(G_\U \toto M_\U)\xto\sim (G\toto M)$$
If $\U$, $\U'$ are open coverings and $\U$ refines $\U'$, then $\phi_\U$ clearly factors through $\phi_{\U'}$, and two such factorizations must be isomorphic.

\begin{proposition}\label{cocycle}
Every generalized map $G\dashto G'$ can be realized as a fraction $\psi/\phi_\U$ for some numerable open covering $\U$ of $M$, and two fractions agree if, when expressed over the same covering $\U$, their numerators are isomorphic. 
$$H^1(G, G') = \varinjlim_{\U} {\rm\ Maps}(G_{\U},G')/_\cong$$
\end{proposition}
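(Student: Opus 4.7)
My plan is to decompose the claim into realization (every generalized map arises as some $\psi/\phi_\U$) and identification (when two such fractions agree), reducing both parts to the good-pullback lifting afforded by fully faithful maps. For realization, I would invoke the preceding remark to replace the denominator of $\psi/\phi$ by a surjective equivalence $\phi:H\xto\sim G$, so that $\phi^{ob}:H^0\to M$ is a surjective submersion. Then I pick a numerable open cover $\U=\{U_i\}$ of $M$ together with local sections $\sigma_i:U_i\to H^0$; these assemble into a smooth $\sigma^{ob}:M_\U\to H^0$ with $\phi^{ob}\sigma^{ob}=\phi_\U^{ob}$. Since $\phi$ is fully faithful its anchor square is a good pullback, which produces a unique smooth arrow-level lift $\sigma^{ar}:G_\U^{ar}\to H^{ar}$ over $(\sigma^{ob},\sigma^{ob})$ with $\phi\sigma^{ar}=\phi_\U^{ar}$; the uniqueness of the lift automatically forces multiplicativity, yielding a groupoid map $\sigma:G_\U\to H$ with $\phi\sigma=\phi_\U$ strictly. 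The diagram
$$\xymatrix@R=10pt{G\ar@{=}[d] & H \ar[l]^\sim_{\phi} \ar[r]^{\psi} & G'\ar@{=}[d] \\ G & G_\U \ar[u]_{\sigma} \ar[l]^\sim_{\phi_\U} \ar[r]_{\psi\sigma} & G'}$$
then exhibits $\psi/\phi=(\psi\sigma)/\phi_\U$.

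For identification, suppose $\psi_1/\phi_\U=\psi_2/\phi_\U$ as generalized maps. By definition this provides $\phi_3:H_3\xto\sim G$ (which I may assume is a surjective equivalence), $\psi_3:H_3\to G'$ and $\alpha_1,\alpha_2:H_3\to G_\U$ with $\phi_\U\alpha_i\cong\phi_3$ and $\psi_i\alpha_i\cong\psi_3$. Since $\phi_\U$ is fully faithful it is a categorical monomorphism (cf.~\ref{categorical.monomorphism}), so $\phi_\U\alpha_1\cong\phi_\U\alpha_2$ forces $\alpha_1\cong\alpha_2$, yielding $\psi_1\alpha_1\cong\psi_3\cong\psi_2\alpha_2\cong\psi_2\alpha_1$; by the saturation properties of weak equivalences (cf.~\ref{wesatura}), $\alpha_1$ is itself a weak equivalence. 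The next step is to find a refinement $\U'$ of $\U$ and a map $\tau:G_{\U'}\to H_3$ satisfying $\alpha_1\tau\cong r$, where $r:G_{\U'}\to G_\U$ is the canonical refinement map. Essential surjectivity of $\alpha_1$ makes $t\,\pr_1:G_\U\times_{M_\U}H_3^0\to M_\U$ a surjective submersion, which on a sufficiently fine refinement admits sections; these provide $\tau^{ob}$ together with a natural isomorphism $g:\alpha_1\tau^{ob}\cong r^{ob}$, and $\tau^{ar}$ is then forced by the good-pullback lifting of the fully faithful $\alpha_1$, exactly as in the realization step. Once $\tau$ is constructed, $\psi_1 r\cong\psi_1\alpha_1\tau\cong\psi_2\alpha_1\tau\cong\psi_2 r$, so the two numerators agree up to isomorphism after pulling back to $\U'$. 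The converse is immediate: an isomorphism $\psi_1 r\cong\psi_2 r$ on a refinement directly realizes the intermediate row of the equivalence diagram.

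Combining the two halves identifies $H^1(G,G')$ with $\varinjlim_{\U}\mathrm{Maps}(G_\U,G')/_\cong$, where the directed system is indexed by numerable open covers of the paracompact manifold $M$ with refinement inclusions (well-defined on isomorphism classes because two refinement maps differ only by an isomorphism). The main obstacle is checking that the object-level data produced by local sections of a surjective submersion extend to a smooth multiplicative map of Lie groupoids, but the good-pullback characterization of fully faithful maps handles this uniformly: the unique arrow-lift automatically respects composition, units and inverses because both sides of each such identity solve the same lifting problem.
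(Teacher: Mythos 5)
Your proof is correct and follows essentially the same route as the paper: reduce to a surjective equivalence, choose local sections of the object-level submersion over a cover, and use the good-pullback/fully-faithful lifting to extend to a strict factorization through $G_\U$, with the categorical-monomorphism property of fully faithful maps governing uniqueness and the identification of fractions. You simply spell out in detail the second half (agreement of numerators after refinement), which the paper compresses into the single sentence ``This also proves the second statement.''
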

\begin{proof}
Given $\psi/\phi:G\dashto G'$ with $\phi: (H\toto N)\xto\sim (G\toto M)$ a surjective equivalence, the map on objects $\phi:N\to M$ is a surjective submersion and therefore it admits local sections over some open covering $\U$ of $M$. A choice of such sections provides a factorization
$$\xymatrix{ G_\U \ar[rr] \ar[rd]_{q_\U} & & H \ar[dl]^\phi \\ & G &}$$
and the resulting map $G_\U \xto\sim H$ is Morita. It is determined up to isomorphism by \ref{categorical.monomorphism}. This also proves the second statement.
\end{proof}


\subsection{Bibundles as generalized maps}

There is another approach to equivalences and generalized maps via principal bibundles. 
Let $G, G'$ be Lie groupoids. A left and right actions $G\action P \raction G'$ define a {\bf bibundle} if they commute and the moment map of one is invariant for the other. We depict the situation by
$$\begin{matrix}
G & \action & P & \curvearrowleft & G' \\
\downdownarrows & \swarrow & & \searrow & \downdownarrows\\
M & & & & M'
\end{matrix}$$
We call $G\action P\to M'$ and $M\from P \raction G'$ the left and right {\bf underlying bundles}.
A bibundle is {\bf principal} if both underlying bundles are so. 
A bundle is left (resp. right) principal if only the right (resp. left) underlying bundle is so.

\begin{example}\

\begin{itemize}
 \item A left principal bibundle $G\action P \raction N$ between a Lie groupoid $G$ and a manifold $N$ is the same as a principal bundle $G\action P\to N$ (cf. \ref{section principal bundles}). 
 \item The left and right multiplications $G\action G \raction G$, $g\cdot g \cdot g'=ghg'$, constitute a principal bibundle.
 \item Given $G$ and $A\subset M$ , we denote by ${\langle A\rangle}$ its saturation. If the restrictions $G_A$, $G_{\langle A\rangle}$ are well-defined, then previous example restricts to a principal bibundle $G_{\langle A\rangle}\action G(-,A)\raction G_A$. In particular when $G$ is transitive and $x\in M$, we have a principal bibundle $G\action G(-,x)\raction G_x$.
\end{itemize}
\end{example}


A free proper action $G\action P$ leads to a principal bibundle as follows. The action gives a principal bundle $G\action P\to P/G$ (cf. \ref{charpbun}) and hence a gauge groupoid $P\times_{M}^{G} P\toto P/{G}$ (cf. \ref{gauge}). An arrow of this gauge groupoid is denoted by $[a',a'']$, with $a',a''\in P$ in the same fiber of the moment map.
The gauge groupoid acts over $P$ on the right, $P\raction P\times_{M}^{G}P$, by the formula
$$a \cdot[a',a'']= g\cdot a' \qquad \iff \qquad  g\cdot a''=a$$
This action is free, proper and compatible with that of $G$, yielding a principal bibundle
$$\begin{matrix}
G & \action & P & \curvearrowleft & P\times_{M}^{G}P \\
\downdownarrows & \swarrow & & \searrow & \downdownarrows\\
M & & & & P/G
\end{matrix}$$
It turns out that every principal bibundle arises in this way.

\begin{proposition}\label{gauge&bibundles}
Given a principal bibundle $G\action P \raction G'$, there is a canonical isomorphism $G'\cong (P\times_{M}^{G} P)$ compatible with the actions.
\end{proposition}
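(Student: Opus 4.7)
The plan is to write down an explicit isomorphism $\Phi\colon G'\to P\times_M^G P$ over $M'\cong P/G$ and verify it intertwines the right actions on $P$. Write $p\colon P\to M$ and $q\colon P\to M'$ for the moment maps of the left and right actions. Principality of the left underlying bundle $G\action P\to M'$ gives that $G$ acts freely and properly on $P$ with $q$ identifying $P/G\cong M'$, so the gauge groupoid $P\times_M^G P\toto P/G$ makes sense and has base canonically equal to $M'$.

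Define $\Phi$ as follows. Given $y'\xfrom{g'}x'$ in $G'$, choose any $a\in P$ with $q(a)=y'$. Then $a\cdot g'$ is defined, $q(a\cdot g')=x'$, and $p(a\cdot g')=p(a)$ because $p$ is $G'$-invariant, so $(a,a\cdot g')\in P\times_M P$. Set $\Phi(g')=[a,a\cdot g']$. Well-definedness is immediate from the freeness and transitivity of the $G$-action on the fibers of $q$: any other choice $b$ has the form $b=g\cdot a$ for a unique $g\in G$, and the commutativity of the two actions gives $b\cdot g'=g\cdot(a\cdot g')$, so $(b,b\cdot g')$ is in the same diagonal $G$-orbit as $(a,a\cdot g')$. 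By construction $\Phi$ covers $\id_{M'}$ and respects source, target and units.

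To see $\Phi$ is a smooth groupoid homomorphism, use that the surjective submersion $q$ admits local smooth sections $\sigma\colon U\to P$ on small opens $U\subset M'$; over the corresponding open in $G'$ the map reads $g'\mapsto[\sigma(t(g')),\sigma(t(g'))\cdot g']$, which is manifestly smooth, and such opens cover $G'$. For multiplicativity, given composable $g'_1,g'_2\in G'$, pick $a$ over $t(g'_2)=t(g'_2g'_1)$; then $\Phi(g'_2g'_1)=[a,a\cdot g'_2g'_1]$, while computing $\Phi(g'_1)$ with the representative $a\cdot g'_2$ over $t(g'_1)=s(g'_2)$ makes the gauge product $\Phi(g'_2)\Phi(g'_1)$ literally $[a,a\cdot g'_2]\cdot[a\cdot g'_2,a\cdot g'_2g'_1]=[a,a\cdot g'_2g'_1]$.

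For the inverse, given $[a,b]\in P\times_M^G P$, principality of the right underlying bundle $M\from P\raction G'$ (free and transitive on the fibers of $p$) supplies a unique $g'\in G'$ with $a\cdot g'=b$; independence from the choice of representative in the $G$-orbit follows once more from commutativity and freeness, and smoothness is again a local-sections argument. Compatibility with the actions on $P$ is then the tautology $a\cdot g'=a\cdot g'$: under $\Phi$ the defining formula $a\cdot[a',a'']=g\cdot a'$ with $g\cdot a''=a$ for the gauge-groupoid action becomes exactly the original right action of $G'$ on $P$. The main obstacle is purely bookkeeping — aligning source/target conventions and the two sides of the bibundle — since once fixed the whole statement reduces to free-and-transitive action facts plus existence of local sections of the submersion $q$.
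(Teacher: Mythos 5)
Your argument is correct and rests on exactly the same two inputs as the paper's proof: the paper packages them as the $G$-equivariant isomorphism $(P\times_{M} P\toto P)\cong(P\times_{M'}G'\toto P)$ supplied by principality of the right underlying bundle, followed by passage to $G$-quotients using principality of the left one, while you simply write out the induced map on quotients, $g'\mapsto[a,a\cdot g']$, together with its inverse (the division map) and check the axioms by hand. The only point to settle is orientation: with the paper's convention $a\cdot[a',a'']=g\cdot a'\iff g\cdot a''=a$, the class $[a,a\cdot g']$ has gauge-source $t(g')$ and gauge-target $s(g')$, so compatibility with the actions forces $\Phi(g')=[a\cdot g',a]$ (or the opposite ordering in the gauge product) --- precisely the bookkeeping you flagged, and not a mathematical gap.
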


\begin{proof}
Since $M\from P\raction G'$ is also a principal bundle we have an isomorphism
$$(P\times_{M} P\toto P)\cong(P\times_{M'} G' \toto P)$$
that identifies the orbits of the actions $G\action P$ and $G\action P\times_MP$ with the fibers of the action map $(P\times_{M'}G'\toto P)\to (G'\toto M')$. The result now follows.
\end{proof}



\begin{theorem}\label{bibundles&generalizedmaps}
There is a 1-1 correspondence between generalized maps and isomorphism classes of right principal bibundles.
$$H^1(G,G')\cong\{\text{right principal bibundles }G\action P\raction G'\}$$
Under this correspondence, equivalences corresponds to principal bibundles.
\end{theorem}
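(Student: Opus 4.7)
My plan is to construct two mutually inverse maps between the two sets, then observe that the extra condition making a right principal bibundle fully principal corresponds to $\psi$ being a weak equivalence. Starting from a generalized map $\psi/\phi : G \dashto G'$ with $\phi : (H \toto N) \xto\sim (G \toto M)$ and $\psi : (H \toto N) \to (G' \toto M')$, I form the manifold
$$\tilde P = \{ (g, y, h') \in G \times N \times G' : s(g) = \phi(y),\; t(h') = \psi(y) \},$$
on which $H$ acts freely and properly from the left by $k \cdot (g, y, h') = (g\,\phi(k)^{-1}, t(k), \psi(k)\,h')$ for $k \in H$ with $s(k) = y$. Freeness and properness follow from $\phi$ being a weak equivalence: $H$ already acts freely and properly on the factor $G \times_M N$ (this being a principal $H$-bundle), and this transfers to $\tilde P$. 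Left multiplication of $G$ on the first coordinate and right multiplication of $G'$ on the third descend to commuting actions on the quotient $P := \tilde P / H$, with moment maps $\mu_L[g, y, h'] = t(g) \in M$ and $\mu_R[g, y, h'] = s(h') \in M'$. The right $G'$-action is visibly free, and its orbits fill the fibers of $\mu_L$: given two classes $[g_1, y_1, h'_1]$ and $[g_2, y_2, h'_2]$ over the same point of $M$, full faithfulness of $\phi$ produces a unique arrow $k : y_1 \to y_2$ with $\phi(k) = g_2^{-1} g_1$, after which the two classes are related by the element $h_1'^{-1} \psi(k)^{-1} h'_2 \in G'$; so $P$ is right principal.

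For the converse, given a right principal bibundle $G \action P \raction G'$ with moment maps $\mu_L : P \to M$ and $\mu_R : P \to M'$, I form the pullback groupoid
$$H = \{(p_2, g, p_1) \in P \times G \times P : \mu_L(p_2) = t(g),\; s(g) = \mu_L(p_1)\} \toto P,$$
with composition $(p_3, g', p_2)(p_2, g, p_1) = (p_3, g' g, p_1)$. The projection $\phi(p_2, g, p_1) = g$ is essentially surjective because $\mu_L$ is a surjective submersion by right principality (cf.\ \ref{anchinje}), and fully faithful because the anchor square is a defining pullback, hence a weak equivalence. Right principality also supplies, for each arrow $(p_2, g, p_1)$, a unique $h' \in G'$ with $g \cdot p_1 = p_2 \cdot h'$; the assignment $\psi(p_2, g, p_1) := h'$ defines a smooth groupoid morphism $H \to G'$ by uniqueness, and $(\phi, \psi)$ yields the associated generalized map.

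Checking that the two constructions are mutually inverse on equivalence classes is a diagram chase: starting from a bibundle, the tautological map $\tilde P \to P$, $(g, y, h') \mapsto g \cdot p_y \cdot h'$ for a local lift $p_y$ of $y$, descends to a canonical isomorphism back to $P$; starting from a generalized map $(H, \phi, \psi)$, there is a canonical weak equivalence from $H$ to the reconstructed pullback groupoid intertwining the two pairs of maps up to isomorphism, which by \ref{charequi} and the universality of homotopy pullbacks (cf.\ \ref{pbequiva}) gives the identification in $H^1(G, G')$. For the last claim, an argument symmetric to the right principality check above shows that the bibundle constructed from $\psi/\phi$ is also left principal exactly when $\psi$ is fully faithful and essentially surjective --- i.e.\ a weak equivalence --- matching the definition of equivalence via generalized maps. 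The main obstacle I foresee is the bookkeeping for well-definedness on equivalence classes, which requires reducing different representatives of a generalized map to a common refinement via homotopy pullbacks; the principality and groupoid-morphism verifications themselves are direct computations.
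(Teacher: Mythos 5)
Your construction matches the paper's: the bibundle associated to $\psi/\phi$ is the same quotient $(G\times_{M} N\times_{M'} G')/H$ with the same $H$-action, and the pullback groupoid $P\times_M G\times_M P\toto P$ you use in the converse direction is canonically isomorphic (via the division map of the right principal $G'$-bundle, which is also what defines your $\psi$) to the two-sided action groupoid $G\ltimes P\rtimes G'\toto P$ that the paper uses, with $\psi$ corresponding to its second projection. The principality verifications, the mutual-inverse checks, and the closing observation that left principality of the constructed bibundle corresponds to $\psi$ being a weak equivalence all proceed as in the paper's proof.
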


\begin{proof}
Given a bibundle $G\action P\raction G'$, we can construct an action groupoid of the simultaneous action
$$G\ltimes P \rtimes G' = (G\times_{M} P\times_{M'} G' \toto P)$$
with source $(g,a,g')\mapsto a$, target $(g,a,g')\mapsto gag'$, and unit, multiplication and inverses induced by those of $G$ and $G'$.
There are obvious projections $\pi_1:G\ltimes P \rtimes G' \to G$, $\pi_2:G\ltimes P \rtimes G'\to G'$, and it is easy to see that $\pi_1$ (resp. $\pi_2$) is Morita if and only if the bibundle is right (resp. left) principal.
$$(G\action P\raction G') \quad \overset{\alpha}{\mapsto} \quad (G \from G\ltimes P \rtimes G' \to G')$$

On the other hand, given a fraction of groupoid maps 
$(G\underset{\sim}{\xfrom\phi} H \xto\psi G')$, we can construct a bibundle as follows.
Consider the following groupoid action.
$$(H\toto N)\action (G\times_{M} N\times_{M'} G'\xto{\pr_2} N) \qquad h\cdot(g,a,g')=(g\phi(h)^{-1},t(h),\psi(h)g')$$
This action is free and proper because $\phi$ is fully faithful. The groupoids $G,G'$ act on the quotient manifold $(G\times_{M} N\times_{M'} G')/H$ by
$\tilde g\cdot[g,n,g']\cdot\tilde g'=[\tilde g g,n,g'\tilde g']$.
This bibundle is right principal, and it is left principal if and only if $\psi$ is Morita as well.
$$(G\action\frac{(G\times_{M} N\times_{M'} G')}{H}\raction G') \quad \overset{\beta}{\leftmapsto}\quad
( G \from H \to G')$$

There is a natural isomorphism $\beta\alpha(G\action P\raction G')\xto\cong (G\action P\raction G')$ defined by $[g,a,g']\mapsto gag'$.
Regarding the other composition there is a natural map 
$$(H\toto N)\to \alpha\beta(H\toto N) \qquad n\mapsto [1,n,1] \quad h\mapsto (\phi(h)^{-1},\psi(h))$$ that establish an identity of generalized maps.
$$\xymatrix{
G \ar@{=}[d] & H \ar[l]_\sim \ar[r] \ar[d] & G' \ar@{=}[d] \\
G & G\ltimes\frac{(G\times_M N \times_{M'}G')}{H}\rtimes G' \ar[l]_(.7)\sim \ar[r] & G'
}$$
\end{proof}

Under the above correspondence, a right principal bibundle $G\action P\raction G'$ is associated to a map $f:G\to G'$ if and only if its right underlying bundle is trivial. In fact, if we denote by $P(\psi/\phi)$ the bibundle associated to the fraction $\psi/\phi$, we have
$P = P(f/1) = \frac{(G\times_M M \times_{M'}G')}{G} = M\times_{M'} G'$. In particular, a principal bibundle corresponds to a Morita map if and only if it admits a global section.

\begin{remark} 
A right principal bibundle $G\action P\raction G'$ can be thought of as a principal $G'$-bundle with base $G$. In fact, the moment map of the let action 
$$(G\times_M P \toto P)\to (G\toto M)$$
is a compatible diagram of Lie groupoids and principal $G'$-bundles, and every such diagram comes from a right principal bundle (cf. \ref{actiactm}, \ref{mapspbun}).
From this point of view, proposition \ref{bibundles&generalizedmaps} is saying that each $G$-bundle over $G'$ is the pullback along a unique generalized map of the universal bundle $G\action G^I\xto s G$.
\end{remark}

\subsection{Differentiable stacks}
\label{sectionss}

Every Lie groupoid has an underlying differentiable stack. 
A generalized map between Lie groupoids is a map between their differentiable stack, and two Lie groupoids are equivalent if and only if their stacks are isomorphic.
In this section we formalize these ideas.

\bigskip

Given two generalized maps $\psi/\phi:G\dashto G'$, $\psi'/\phi':G'\dashto G''$, their {\bf composition} is defined as $\psi'\psi''/\phi''\phi$, where $\phi'':K\xto\sim H$ and $\psi':K\to H'$ are such that the square below commutes up to isomorphism.
$$\xymatrix@R=10pt{ & & K \ar[dl]^\sim_{\phi''} \ar[dr]^{\psi''} & & \\
 & H \ar[dl]^\sim_\phi \ar[dr]^{\psi} & & H' \ar[dl]^\sim_{\phi'} \ar[dr]^{\psi'} & \\ G & & G' & & G''}$$
We can take as $K$ the homotopy pullback $H\tilde\times_{G'} H'$, and any other choice will lead to an equivalent fraction (cf. \ref{universal.property.homotopy.pullback}, \ref{pbequiva}).
With this composition we get a well-defined category of Lie groupoids and generalized maps.
Given a Lie groupoid $G\toto M$, we define its underlying {\bf differentiable stack} $[M/G]$ as the object it defines in this category.
$$\{\text{Differentiable stacks}\} = \{\text{Lie Groupoids}\}/_{\sim}$$
Maps of differentiable stacks $[M/G] \to [M'/G']$ are, by definition, generalized maps of Lie groupoids
$G\dashto G'$. 
Such a map induces a map between the orbit spaces and maps between the normal representations (cf. \ref{charequi}).
We can think of $[M/G]$ as the topological space $M/G$ with some smooth data attached.

\begin{example}
We can identify manifolds with their unit groupoids and their underlying differentiable stacks. This way we can see the category of differentiable stacks as an extension of that of manifolds (cf. \ref{example.generalize.maps}).
The same happens with orbifolds. We can actually define orbifolds as the underlying differentiable stacks to certain Lie groupoids (proper with finite isotropy).
\end{example}

\begin{remark}
The construction of the category of differentiable stacks from that of Lie groupoids can be framed into the theory of localization and calculus of fractions (see eg. \cite[\S 7.1]{ks}).
The category $\{\text{differentiable stacks}\}$ is obtained from
$\{\text{Lie Groupoids}\}/_{\simeq}$ by formally inverting the Morita maps.
The description of the maps as fractions $\psi/\phi$ is a consequence of the general theory, once it is proved that the class of maps we are inverting is a left multiplicative system (cf. \ref{wesatura} and \ref{pbequiva}).
\end{remark}

According to \ref{wesatura} Morita maps are {\it saturated} (cf. \cite[\S7.1]{ks}), and thus a generalized map $\psi/\phi$ is invertible if and only if $\psi$ is Morita. In other words, two Lie groupoids are equivalent $G\sim G'$ if and only if their differentiable stacks are isomorphic $[M/G]\cong [M'/G']$. Of course, this can also be proved directly.


\bigskip

%
%
 
Given $G\toto M$ a Lie groupoid, the canonical inclusion $(M\toto M)\to(G\toto M)$
induces a map $\pi:M\to [M/G]$ of differentiable stacks, which we call the {\bf presentation} of $[M/G]$ induced by $G\toto M$.
$$G\toto M \quad \leadsto \quad M \to [M/G] $$ 
It turns out that a Lie groupoid is essentially encoded in this presentation.

\begin{theorem}\label{presentations}
There is a 1-1 correspondence between isomorphism classes of maps $(G\toto M)\to(G'\toto M')$ and commutative squares in $\{\text{Differentiable Stacks}\}$ between the induced presentations.
$$(G\toto M)\xto\theta(G'\toto  M') \quad \longleftrightarrow \quad
\begin{matrix}\xymatrix@R=10pt{ M\ar[r]^f \ar[d] & M'\ar[d] \\ [M/G] \ar[r]^{\psi/\phi} & [M'/G']}\end{matrix}$$
\end{theorem}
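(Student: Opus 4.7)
The plan is to define maps in both directions and verify they are mutually inverse. In the forward direction, a Lie groupoid map $\theta:(G\toto M)\to(G'\toto M')$ produces the square with top $f=\theta^{ob}:M\to M'$ (a smooth map, hence a map of orbispaces) and bottom the generalized map $[\theta]:[M/G]\to[M'/G']$; commutativity is automatic because both composites $[\theta]\circ\pi$ and $\pi'\circ f$ are represented, as generalized maps $M\dashto G'$, by the same honest groupoid map $(M\toto M)\to(G'\toto M')$, namely $x\mapsto f(x)$ on objects and $1_x\mapsto 1_{f(x)}$ on arrows. Passing to isomorphism classes on the left corresponds exactly to identifying squares whose top arrows $f_1,f_2$ are related by a smooth map $\alpha:M\to G'$ with $\alpha(x):f_1(x)\to f_2(x)$, since this is precisely the data of an isomorphism $\theta_1\cong\theta_2$ (cf.\ \ref{behavior.isomorphic.maps}).

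For the backward direction I would use the cocycle description from \ref{cocycle}. After choosing a sufficiently fine numerable open cover $\U=\{U_i\}$ of $M$, represent the bottom arrow as a strict fraction $\psi_\U/\phi_\U$, where $\phi_\U:(G_\U\toto M_\U)\xto\sim(G\toto M)$ is the associated surjective equivalence and $\psi_\U:(G_\U\toto M_\U)\to(G'\toto M')$ is an honest groupoid map. Commutativity of the square in orbispaces means that the two generalized maps $M_\U\dashto G'$ given by $\psi_\U$ (composed with the unit inclusion $(M_\U\toto M_\U)\to(G_\U\toto M_\U)$) and by $u_{G'}\circ f\circ\phi_\U^{ob}$ agree; after possibly refining $\U$, this agreement is witnessed by a smooth map $\alpha:M_\U\to G'$ with $\alpha(x):\psi_\U^{ob}(x)\to f(\phi_\U^{ob}(x))$ intertwining the arrow maps. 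Conjugating $\psi_\U$ by $\alpha$ produces $\tilde\psi_\U:(G_\U\toto M_\U)\to(G'\toto M')$ with $\tilde\psi_\U^{ob}=f\circ\phi_\U^{ob}$. By construction $\tilde\psi_\U$ is constant on the $\phi_\U$-fibers at the level of objects, and fully-faithfulness of $\phi_\U$ together with the surjective submersion $\phi_\U^{ob}:M_\U\to M$ then forces a unique strict groupoid map $\theta:(G\toto M)\to(G'\toto M')$ with $\theta\circ\phi_\U=\tilde\psi_\U$, $\theta^{ob}=f$, and $[\theta]=[\psi/\phi]$.

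Mutual inverse verification is then routine: starting from $\theta$ the procedure recovers $\theta$ up to isomorphism by the categorical monomorphism property of $\phi_\U$ (cf.\ \ref{categorical.monomorphism}); starting from a square, the constructed $\theta$ yields the given $f$ and $[\psi/\phi]$ by design. The main obstacle is the descent step in the backward direction, namely showing that $\tilde\psi_\U$ truly descends along $\phi_\U$. This is a cocycle/equivariance check on the overlaps $U_i\cap U_j$ (equivalently, on the arrow manifold of $G_\U$ versus $G$), and it rests on the hypothesis that the commutativity of the square can be upgraded, after refinement, to a strict isomorphism of the two composite honest groupoid maps $M_\U\to G'$. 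Everything else reduces to the universal properties of cocycle representations and to fully-faithfulness of $\phi_\U$ at the arrow level.
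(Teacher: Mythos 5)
Your proposal is correct and follows essentially the same route as the paper: extract from the commutativity of the square an isomorphism $\alpha$ valued in arrows of $G'$, twist the numerator $\psi$ by conjugation with $\alpha$ so that it becomes constant on the fibers of the surjective equivalence, and descend. The only cosmetic difference is that you specialize to the Cech presentation $\phi_\U:G_\U\to G$ via \ref{cocycle}, whereas the paper works with an arbitrary surjective equivalence $\phi:H\to G$ and obtains $\alpha$ as an isomorphism of maps on the kernel subgroupoid of $\phi$ — which, for $\phi=\phi_\U$, is exactly the Cech groupoid of overlaps on which your cocycle check takes place.
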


\begin{proof}
It is clear that a map $\theta$ induces one of these commutative squares of differentiable stacks. 
Let us prove the converse.

Let $f:M\to M'$ and $\psi/\phi:[M/G]\to [M'/G']$ be such that $\pi'f=(\psi/\phi) \pi$.
We can assume that $\phi$ is a surjective equivalence. 
Write $j:(K\toto N)\subset (H\toto N)$ for the {\bf kernel} of $\phi$, say the Lie groupoid of arrows that are mapped by $\phi$ into identities.
In the following diagram of Lie groupoids
$$\xymatrix{
M \ar[d]_\pi \ar@/^1pc/[rr]^f & K\ar[l]_\sim^{\phi|_K} \ar[d]^j & M'\ar[d]^{\pi'} \\
G & H \ar[l]_\sim^\phi \ar[r]_\psi & G'}$$
the left square commutes on the nose, hence $(\psi/\phi) \pi = (\psi j)/ \phi|_K$.
From the identity $(\psi j)/ \phi|_K = \pi'f$ we deduce that there is an isomorphism of Lie groupoid maps 
$$\alpha:\psi j \cong \pi' f \phi|_K:K \to G' \qquad  f(\phi(n))\xfrom{\alpha(n)} \psi(n) \ \text{ for all } n\in N$$
Now we use $\alpha$ to {\it twist} the map $\psi$. More precisely, we define $\tilde \psi:H\to G'$ by 
$$\tilde \psi( n' \xfrom h n) = (f\phi(n') \xfrom{\alpha(n')\psi(h)\alpha(n)^{-1}}f\phi(n))$$
The map $\alpha$ gives an isomorphism $\tilde\psi\cong\psi$, and $\tilde\psi$ is constant along the fibers of $\phi:H\to G$, hence it induces a map $\theta:G\to G'$
as required.
\end{proof}


\begin{remark}\label{stack}
The category of differentiable stacks can alternatively be constructed by using stacks.
Roughly speaking, stacks are sheaves of groupoids,
they extend the notion of spaces,
and have proved to be a useful tool especially in moduli problems.
Within that framework, what we called a differentiable stack is just a stack over the category of manifolds which can be presented as a quotient of a manifold, and a Lie groupoid is one of such presentations. 
We refer to \cite{bx} and \cite{metzler} for an introduction to stacks in general and how to apply the theory in manifolds. See also \cite{gh}.
\end{remark}

\section{Proper groupoids}\label{sectprop}

This section deals with proper groupoids and the geometry of their differentiable stacks.
Along the several subsections we include: definitions and examples; properties of orbits and slices; a discussion on stability; Zung's Theorem; and an overview on linearization.

We use \cite{mm}, \cite{weinstein} as general references for this section, and we especially follow \cite{cs} for Zung's theorem and the linearization discussion. As we explained in the introduction, a completely new approach to the topic will be presented in \cite{riemannian}.

\subsection{Proper groupoids}

A Lie groupoid $G\toto M$ is said to be {\bf proper} if its anchor 
$\rho=(t,s):G\to M\times M$
is a proper map (cf. \S \ref{sectionpropmaps}).
Equivalently, a groupoid is proper if given compact sets $K,K'\subset M$ the set of arrows between them $G(K,K')$ is compact as well.

Since a proper map is closed with compact fibers, in a proper groupoid the isotropy groups $G_x$ are compact, the relation $\rho(G)\subset M\times M$ is closed, and therefore the orbit space $M/G$ is Hausdorff.

\begin{example}\

\begin{itemize}
\item
Given $M$ a manifold, its unit groupoid $M\toto M$ and its pair groupoid $M\times M\toto M$ are proper. More generally, a submersion groupoid $M\times_N M\toto M$ is the same as a proper groupoid without isotropy (cf. \ref{anchinje}).
\item
A Lie group $G\toto \ast$ is proper if and only if it is compact.
More generally, a transitive groupoid is proper if and only if its isotropy at a point is compact. For instance, the general linear groupoid $GL(E)$ of a vector bundle $E$ is not proper, but the orthogonal groupoid $O(E)$ is so (cf. \ref{linear.groupoids}).
\item
By definition, an action $(G\toto M)\action (A\to M)$ is proper if the action groupoid $G\times_M A\toto A$ is so (cf. \ref{subsection.groupoid.actions}, see also \cite{dk} for the group case).
\item
The Lie groupoid arising from a covering of an orbifold is proper (cf. \cite{mm}).
\end{itemize}
\end{example}

There is a local version for the notion of properness.
A Lie groupoid $G\toto M$ is {\bf proper at $x$} if its anchor map $\rho$ is proper at $(x,x)$.
A proper groupoid is proper at every point, but the converse is not true.

\begin{example}
Let $G\toto M$ be the groupoid without isotropy whose objects are the non-zero points in the plane, and whose orbits are the leaves of the foliation by horizontal lines. This groupoid is proper at every point, but it is not proper, for $M/G$ is not Hausdorff. 
\end{example}


\begin{proposition}\label{punctual}(Compare with \cite[2.5]{dk})
A groupoid $G\toto M$ is proper if and only if it is proper at every point and the orbit space $M/G$ is Hausdorff.
\end{proposition}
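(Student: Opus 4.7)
The plan is to handle the two implications separately. The forward direction is essentially immediate, and the nontrivial content of the proposition lies in the backward direction, where one must promote pointwise properness on the diagonal to global properness; the key idea is to use bisections to translate an arbitrary point $(y,x)$ of $M\times M$ back to the diagonal point $(x,x)$.

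For the forward direction ($\Rightarrow$), if $\rho$ is proper then it is trivially proper at every point of $M\times M$, and in particular is a closed map, so $\rho(G)\subset M\times M$ is closed. Since the quotient map $q:M\to M/G$ is open (noted earlier in the paper) and $\rho(G)$ is precisely its kernel relation, $M/G$ is Hausdorff.

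For the backward direction ($\Leftarrow$), assume $G$ is proper at every $x\in M$ and $M/G$ is Hausdorff. I need to show $\rho$ is proper at every $(y,x)\in M\times M$. So take a sequence $(g_n)\subset G$ with $\rho(g_n)\to(y,x)$ and seek a convergent subsequence. Since $M/G$ is Hausdorff the relation $\rho(G)$ is closed in $M\times M$, hence $(y,x)\in\rho(G)$, which gives some arrow $h:x\to y$. Now pick a bisection $B\subset G$ containing $h$; set $U=s(B)$, $V=t(B)$, so that $f_B:U\to V$ is a diffeomorphism sending $x$ to $y$ and $f_{B^{-1}}:V\to U$ sends $y$ back to $x$. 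Apply the translation $L_{B^{-1}}:G(V,-)\to G(U,-)$ (a diffeomorphism onto its image) to the tail of the sequence: since $t(g_n)\to y\in V$, for large $n$ the element $g'_n:=L_{B^{-1}}(g_n)$ is well defined, and
$$s(g'_n)=s(g_n)\to x,\qquad t(g'_n)=f_{B^{-1}}(t(g_n))\to f_{B^{-1}}(y)=x,$$
so $\rho(g'_n)\to(x,x)$. By the hypothesis of properness at $x$, a subsequence $g'_{n_k}$ converges to some $g'\in G$ with $\rho(g')=(x,x)$, and applying the continuous $L_B$ yields $g_{n_k}=L_B(g'_{n_k})\to L_B(g')$, the desired convergent subsequence.

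The main obstacle worth mentioning is of a bookkeeping nature: the translations $L_B$ and $L_{B^{-1}}$ are only defined on opens of $G$, and one must verify that $t(g_n)\in V$ eventually (immediate from $t(g_n)\to y\in V$) and that $t(g'_{n_k})\in U$ eventually (immediate from $t(g'_{n_k})\to x\in U$). Once the domains are in order, continuity of the translation maps takes care of the rest, and the whole argument rests essentially on the two structural facts that a bisection passes through every arrow and that a Hausdorff orbit space forces the orbit relation to be closed.
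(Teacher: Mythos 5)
Your proof is correct and follows essentially the same route as the paper: the forward direction via closedness of $\rho(G)$ and openness of $q$, and the backward direction by using closedness of the orbit relation to reduce to points of $\rho(G)$ and then translating by a bisection to move the problem from $(y,x)$ to the diagonal point $(x,x)$. You have merely made explicit the domain bookkeeping for $L_B$ and $L_{B^{-1}}$ that the paper leaves implicit.
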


\begin{proof}
Let $G\toto M$ be such that $M/G$ is Hausdorff and the anchor $\rho$ is proper at $(x,x)$ for all $x$. We have to show that $\rho$ is proper at any point $(y,x)$.
Since $M/G$ is Hausdorff the relation $\rho(G)\subset M\times M$ is closed and the anchor is obviously proper at points $(y,x)\notin \rho(G)$.
On the other hand, if there is an arrow $y\xfrom g x$ in $G$, the translation by a bisection through $g$ shows that the anchor over $(y,x)$ behaves as over $(x,x)$, and thus the result.
\end{proof}

\begin{remark}\label{proper.neighborhood}
Given any Lie groupoid $G\toto M$, the points $U\subset M$ at which it is proper is open and saturated. It is open because of the local nature of properness (cf. \ref{openproper}) and it is saturated because of the argument with bisections used in the proof above.
It follows that a groupoid $G\toto M$ is proper at a point $x\in M$ if and only if there exists a saturated open $x\in V\subset M$ such that the restriction $G_V\toto V$ is proper: we can take $x\in U\subset M$ small so as to make $G_U\toto U$ proper, and then take $V$ as its saturation.
\end{remark}

Properness is invariant under equivalences, it is a property of the differentiable stack rather than the Lie groupoid itself.

\begin{proposition}
If two Lie groupoids are equivalent and one of them is proper, then so does the other.
\end{proposition}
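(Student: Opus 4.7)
The plan is to reduce to a single surjective equivalence and then transfer properness across the good pullback of anchors. By the remark closing the subsection on generalized maps, any equivalence $G\sim G'$ can be realized by a span of surjective equivalences $H\xto\sim G$, $H\xto\sim G'$, so it is enough to prove that whenever $\phi:(H\toto N)\to(G\toto M)$ is a surjective equivalence, $H$ is proper if and only if $G$ is proper.

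The key observation is that since $\phi$ is fully faithful, the anchor square
$$\xymatrix{H \ar[r]^\phi \ar[d]_{\rho_H} \ar@{}[dr]|(.7){\lefthalfcap} & G \ar[d]^{\rho_G} \\ N\times N \ar[r]_{\phi\times\phi} & M\times M}$$
is a good pullback, and since $\phi^{ob}:N\to M$ is a surjective submersion, the bottom arrow $\phi\times\phi$ is a surjective submersion as well. One direction is then immediate from the "nice class" formalism of \S\ref{sectionpropmaps}: if $\rho_G$ is proper, then its base-change $\rho_H$ along $\phi\times\phi$ is also proper, so $H$ is proper.

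For the converse direction I would prove a small descent lemma: if $q:Y'\to Y$ is a surjective submersion and the base-change $\tilde f:X\times_Y Y'\to Y'$ of a continuous map $f:X\to Y$ is proper, then $f$ itself is proper. This uses the sequential characterization of properness: given $(x_n)\subset X$ with $f(x_n)\to y$, pick any $z\in q^{-1}(y)$ and a local section $\sigma$ of $q$ defined near $y$ with $\sigma(y)=z$. Then $(x_n,\sigma(f(x_n)))$ is a sequence in $X\times_Y Y'$ whose image under $\tilde f$ converges to $z$, so properness of $\tilde f$ yields a convergent subsequence, which in turn projects to a convergent subsequence of $(x_n)$. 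Applying this with $f=\rho_G$ and $q=\phi\times\phi$ finishes the argument.

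The main obstacle is really just this descent lemma, which has no groupoid content and depends only on local sections of submersions plus the sequential description of properness recalled in \S\ref{sectionpropmaps}. Everything else is formal manipulation of pullback squares and of the "nice class" of proper maps, and no new Lie-theoretic input is required.
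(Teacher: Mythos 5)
Your proof is correct and follows essentially the same route as the paper: reduce to a single surjective equivalence, use the good pullback of anchors to transfer properness forward by base-change, and recover properness of the base map from local sections of the surjective submersion $\phi\times\phi$. Your ``descent lemma'' is exactly the step the paper compresses into the phrase about locally expressing the anchor as a base-change via local sections, so no new idea is involved.
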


\begin{proof}
Equivalent groupoids can always be linked by surjective equivalences.
Thus, let $\phi:(G\toto M)\to (G'\toto M')$ be a surjective equivalence.
Since $\phi$ is fully faithful the square
$$\xymatrix{ G \ar@{}[dr]|(.7){\lefthalfcap} \ar[r] \ar[d]_\rho & G' \ar[d]^{\rho'} \\ M\times M \ar[r] & M'\times M'}$$
is a pullback, and thus $\rho$ is a base-change of $\rho'$.
On the other hand, since $\phi$ is surjective, we can locally express $\rho'$ as a base-change of $\rho$ by using local sections of $M\times M\to M'\times M'$.
Since properness of maps is stable under base-change the result follows.
\end{proof}

Previous proposition admits a punctual version: If 
$[M/G]\to [M'/G']$ is an isomorphism of differentiable stacks mapping $[x]$ to $[x']$, then $G\toto M$ is proper at $x$ if and only if $G'\toto M'$ is proper at $x'$. This can be seen by restricting the equivalence to suitable saturated open subsets $V\subset M$ and $V'\subset M'$.

We say that a differentiable stack is {\bf separated} if it is associated to a proper groupoid.


\subsection{Orbits and slices}


Given $G\toto M$ a Lie groupoid and $x\in M$, the source-fiber $G(-,x)\subset G$ is an embedded submanifold, the isotropy acts $G(-,x)\raction G_x$ freely and properly, and the orbit $G(-,x)/G_x\cong O_x$, whose manifold structure is that of the quotient, is included as a submanifold $O_x\subset M$. It may be the case that the orbit is not embedded.

\begin{example}\label{kronecker}
The foliation on the torus $T=S^1\times S^1 = \R^2/\Z^2$ induced by the parallel lines on $\R^2$ of some irrational slope is called {\it Kronecker foliation}. We can define a Lie groupoid without isotropy, with an arrow between two points if they belong to the same leaf. The orbits on this Lie groupoid are exactly the leaves, which are not embedded submanifolds.
\end{example}

\begin{proposition}\label{embedded-orbit}
If $G\toto M$ is proper at $x$ then $O_x\subset M$ is closed and embedded.
\end{proposition}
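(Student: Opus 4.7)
The plan is to reduce to the proper case via \ref{proper.neighborhood} and then exploit the properness of the restricted target map $t_x:G(-,x)\to M$. Invoking \ref{proper.neighborhood}, since $G\toto M$ is proper at $x$ there is a saturated open $V\subset M$ with $x\in V$ such that $G_V\toto V$ is proper; because $V$ is saturated, the orbit of $x$ in $G$ coincides with its orbit in $G_V$, and both closedness and embeddedness can be verified inside the open saturated neighborhood $V$. So I may assume from now on that $G\toto M$ itself is proper.

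Under this assumption, the map $t_x:G(-,x)\to M$ is proper: indeed, $G(-,x)=\rho^{-1}(M\times\{x\})$, and $t_x$ is obtained as the base change of the proper anchor $\rho:G\to M\times M$ along the closed embedding $y\mapsto(y,x)$, so $t_x$ inherits properness. Its image $O_x=t_x(G(-,x))$ is therefore closed in $M$, which gives the first half of the claim.

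For the embedded part, recall from \ref{orbits&isotropy} that $O_x$ carries the manifold structure of the quotient $G(-,x)/G_x$, under which the projection $p:G(-,x)\to O_x$ is a surjective submersion and the inclusion $\iota:O_x\to M$ is an injective immersion, with $t_x=\iota\circ p$. Given any $B\subset O_x$ closed in the quotient topology, $p^{-1}(B)$ is closed in $G(-,x)$ and hence $\iota(B)=t_x(p^{-1}(B))$ is closed in $M$, because $t_x$ is closed (being proper) and $p$ is surjective; thus $\iota$ is a continuous injective closed map, i.e.\ a topological embedding, and combined with the immersion property this makes $O_x$ an embedded submanifold of $M$.

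The main subtlety is upgrading $\iota$ from a mere immersion to a topological embedding, that is, matching the quotient topology on $O_x$ with the subspace topology inherited from $M$. Without properness this fails dramatically, as Example \ref{kronecker} (the Kronecker foliation) already shows; here the closedness of $t_x$, coming from properness of $G$, is exactly the ingredient that bridges the gap and forces the two topologies to agree.
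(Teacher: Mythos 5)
Your proof is correct and follows essentially the same route as the paper's: reduce to the proper case via \ref{proper.neighborhood}, and then use that the base-change of the proper anchor over the orbit is a closed map to force the quotient topology on $G(-,x)/G_x$ to agree with the subspace topology. The only cosmetic differences are that the paper deduces closedness of $O_x$ from Hausdorffness of $M/G$ rather than from closedness of $t_x$, and phrases the topology-matching step as ``a closed surjection is a topological quotient'' over $\tilde O_x\times x$ instead of your ``a closed continuous injection is an embedding''.
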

\begin{proof}
By restricting to a neighborhood we can assume that $G$ is proper (cf. \ref{proper.neighborhood}).
The orbit is closed because it is a fiber of the quotient map $M\to M/G$ and $M/G$ is Hausdorff.
Write $\tilde O_x\subset M$ for the orbit endowed with the subspace topology, an consider the following topological pullback.
$$\xymatrix{
G(-,x) \ar[r] \ar[d] \ar@{}[dr]|(.7){\lefthalfcap} & G \ar[d]^\rho \\ \tilde O_x\times  x \ar[r] & M\times M }$$
Since the right map is proper, the left one is closed and hence it is a topological quotient.
We conclude that both the quotient and the subspace topologies on the orbit agree, namely $O_x=\tilde O_x$, and we are done.
\end{proof}

Given $G\toto M$ and $x\in M$, a {\bf slice} of $G$ at $x$ is an embedded submanifold 
$x\in S\subset M$ such that
(1) $S$ is transverse to the orbits, and
(2) $S$ intersects $O_x$ only at $x$.
This notion is close to those of slices for Lie group actions, and transverse sections to  foliations. However, note that a slice for an action groupoid need not to be a slice for the corresponding group action (cf. \cite{dk}).

Slices may not exist in general (see eg. \ref{kronecker}) but do exist for proper groupoids.

\begin{proposition}\label{exissli}
If $G\toto M$ is proper at $x$ then there is a slice $S$ at $x$.
\end{proposition}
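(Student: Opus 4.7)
The plan is to build the slice as any small embedded submanifold through $x$ whose tangent space at $x$ is complementary to $T_xO_x$, and then verify the two slice axioms using properness through Proposition \ref{embedded-orbit}. The essential observation is that properness is needed only to guarantee that $O_x$ is a closed embedded submanifold; once we have that, the rest is standard differential topology.

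More precisely, the first step is to invoke Proposition \ref{embedded-orbit} so that $O_x \subset M$ is a closed embedded submanifold, and in particular properly embedded. This lets us pick a chart $(U,\varphi)$ centered at $x$ in which $O_x \cap U$ corresponds to $\mathbb{R}^k \times \{0\} \subset \mathbb{R}^k \times \mathbb{R}^{n-k}$. Choose a linear complement $W \subset T_xM$ with $T_xM = T_xO_x \oplus W$, and define $S \subset U$ as the image under $\varphi^{-1}$ of a small ball in $\{0\} \times \mathbb{R}^{n-k}$, so that $T_xS = W$. By construction, $S$ is an embedded submanifold of $M$ through $x$.

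Next I would verify condition (2), that $S \cap O_x = \{x\}$. Within the chart, this is automatic because the submanifold coordinates split $O_x$ off from $S$. Globally, since $O_x$ is closed and embedded in $M$, a small enough neighborhood of $x$ in $M$ meets $O_x$ inside the chart $U$; so by shrinking $S$ (which is contained in $U$) we kill the possibility that a distant sheet of $O_x$ accidentally intersects $S$. For condition (1), transversality at $x$ is immediate from $T_xS \oplus T_xO_x = T_xM$. For a nearby point $y \in S$, pick vector fields $X_1,\dots,X_k$ on a neighborhood of $x$ that lie in the image of the algebroid anchor (for instance, differentials of the target map along local sections of the source) and whose values at $x$ span $T_xO_x$; by construction, $X_i(y) \in T_yO_y$. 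By continuity the subspaces $\mathrm{span}\{X_i(y)\} \subset T_yO_y$ stay close to $T_xO_x$ while $T_yS$ stays close to $T_xS$, so the sum equals $T_yM$ for $y$ sufficiently close to $x$. Shrinking $S$ once more ensures transversality at every $y \in S$.

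I do not expect any real obstacle in this proof: all the difficulty is already packaged in Proposition \ref{embedded-orbit}. The only subtle point worth writing carefully is the openness of transversality, which is not automatic because orbit dimension is only lower semi-continuous, but it follows from the argument above using vector fields that span $T_xO_x$ continuously into the nearby orbits.
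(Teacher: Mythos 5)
Your proof is correct and follows essentially the same route as the paper: invoke Proposition \ref{embedded-orbit} to get an adapted chart for the embedded orbit, take the complementary coordinate slice through $x$, and then shrink it using the openness of the transversality condition, which the paper phrases as a maximal-rank locus and you phrase via smoothly varying vector fields tangent to the orbits. Your explicit handling of why transversality is open despite the lower semi-continuity of orbit dimension is a correct elaboration of the paper's terser remark.
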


\begin{proof}
Since $G$ is proper at $x$ the orbit $O_x\subset M$ is an embedded submanifold (cf. \ref{embedded-orbit}). Then we can take a manifold chart
$$\phi:\R^p\times\R^q\to U\subset M$$
such that $\phi(0,0)=x$ and $\phi^{-1}(O_x\cap U)=\R^p\times \{0\}$.
Consider $S'=\phi(\{0\}\times\R^q)\subset M$, which is an embedded submanifold that intersects $O_x$ only at $x$.
We can take as a slice the open subset $S\subset S'$,
$$S=\{y | T_yO_y+T_yS'=T_yM\}$$
which is open for it is the locus on which some matrices have maximum rank.
\end{proof}

As we explained in \ref{section.normal.representation}, the restriction of a Lie groupoid $G\toto M$ to a submanifold $A\subset M$ may not be well-defined, even if $A$ is embedded.
Next we use what we know on the differential of the anchor to show that we can restrict a Lie groupoid to a slice. 

\begin{proposition}
Given $G\toto M$ a Lie groupoid and $S$ a slice at $x$, the restriction $G_S\toto S$ is a well-defined Lie groupoid.
\end{proposition}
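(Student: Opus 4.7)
The plan is to show that the anchor $\rho\colon G\to M\times M$ is transverse to the inclusion $S\times S\hookrightarrow M\times M$; once this is established, Proposition \ref{pullback of manifolds} immediately gives that $G_S=\rho^{-1}(S\times S)$ is an embedded submanifold realizing a good pullback, and it only remains to check that the structural maps descend and, crucially, that $s,t\colon G_S\to S$ are submersions.

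For the transversality, fix $g\in G_S$, say $y\xfrom g x'$ with $x',y\in S$, and let $O$ be the common orbit of $x',y$. Given $(v,w)\in T_yM\times T_{x'}M$, I want to split it as a sum of a vector in $\im(d_g\rho)$ and one in $T_yS\times T_{x'}S$. By Proposition \ref{diffanch}, the image of $d_g\rho$ consists of those $(v_1,w_1)$ satisfying $[v_1]=\eta_g[w_1]$ in $N_yO$. Using that $S$ is transverse to orbits, $T_{x'}S\to N_{x'}O$ is surjective, and since $\eta_g\colon N_{x'}O\to N_yO$ is an isomorphism, one can pick $w_2\in T_{x'}S$ so that $\eta_g[w_2]=\eta_g[w]-[v]$; then $(v,w-w_2)$ lies in $\im(d_g\rho)$ and $(0,w_2)\in T_yS\times T_{x'}S$, giving the required decomposition.

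Next, from the good pullback $G_S=G\times_{M\times M}(S\times S)$, I need $s|_{G_S}\colon G_S\to S$ to be a submersion (then $t|_{G_S}$ is as well by the inverse map). Given $a\in T_{x'}S$, by surjectivity of $d_gs$ choose $v'\in T_gG$ with $d_gs(v')=a$; decompose $d_gt(v')=a_1+a_2$ with $a_1\in T_yS$, $a_2\in T_yO$, which is possible by transversality of $S$ to $O$. Since $t_{x'}\colon G(-,x')\to M$ has image spanning $T_yO$ on tangents, there exists $z\in T_gG(-,x')\subset\ker d_gs$ with $d_gt(z)=-a_2$. Then $v:=v'+z$ satisfies $d_g\rho(v)=(a_1,a)\in T_yS\times T_{x'}S$, so $v\in T_gG_S$ and $d_gs(v)=a$, proving submersivity.

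Finally, the structural maps of $G$ restrict: the unit $u(x')\in G$ is sent by $\rho$ to $(x',x')$, the inverse preserves $G_S$ as it swaps source and target both lying in $S$, and multiplication stays in $G_S$ since the three relevant objects all lie in $S$. Smoothness of these restrictions is automatic, and associativity and unit axioms are inherited. The main obstacle, and the conceptual heart of the argument, is the transversality step, since it is precisely there that both defining properties of a slice (transversality to orbits and the description of $\im d\rho$ via the normal representation) are combined.
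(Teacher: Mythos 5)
Your proof is correct and follows essentially the same route as the paper: transversality of $\rho$ to $S\times S$ via the description of $\im(d_g\rho)$ in Proposition \ref{diffanch}, followed by submersivity of $s|_{G_S}$ using transversality of $S$ to the orbits. Your submersion step, which builds the lift explicitly by correcting along the $s$-fiber, is just a constructive unpacking of the paper's observation that a suitable $w\in T_yS$ with $(w,v)\in\im(d_g\rho)$ always exists because $T_yS\to N_yO_y$ is onto.
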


\begin{proof}
The map $S\times S\to M\times M$ is transverse to the anchor (cf. \ref{diffanch}), thus we have a good pullback
$$\xymatrix{G_S \ar[r] \ar[d]  \ar@{}[dr]|(.7){\lefthalfcap}& G \ar[d] \\ S\times S\ar[r] & M\times M}$$
and $G_S\subset G$ is an embedded submanifold.
In order to prove that $G_S\toto S$ with the induced structure is a Lie groupoid we just need to show that the source restricts to a submersion $s:G_S\to S$. Given $y\xfrom g x$ an arrow in $G_S$ and $v\in T_xS$, we look for a vector $\tilde v\in T_g G_S$ such that $ds(\tilde v)=v$. From the pullback of vector spaces
$$\xymatrix{T_gG_S \ar[r] \ar[d]  \ar@{}[dr]|(.7){\lefthalfcap}& T_gG \ar[d] \\ T_yS\times T_xS\ar[r] & T_yM\times T_xM}$$
we deduce that such a $\tilde v$ exists if and only if there is some $w\in T_yS$ with $(w,v)\in  {\rm Im}( d_g\rho)$. Since $S$ and $O_y$ are transverse, the composition $T_yS\to T_yM \to N_yO_y$ is surjective, from where there always exists such a $w$ (cf. \ref{diffanch}).
\end{proof}

Note that if $G\toto M$ is proper, then the restriction $G_S\toto S$ also is.

\begin{remark}\label{slice.chart}
A slice $S$ of $G$ at $x$ allows us to describe the geometry of the differentiable stack $[M/G]$ in a neighborhood of $[x]$.
In fact, the inclusion
$G_S\to G$
is fully faithful, and if we write $U$ for the saturation of $S$, then $U$ is open and we have an equivalence $G_S\sim G_U$, which is the same as an isomorphism between the differentiable stacks $[S/G_S]\cong [U/G_U]$.
\end{remark}



\subsection{Stable orbits}

Orbits of Lie groupoids play the role of the fibers of a submersion, the leaves of a foliations, or the orbits of group actions. We can export from there the notion of stability. 
Given $G\toto M$ a Lie groupoid, an orbit $O_x\subset M$ is called {\bf stable} if it admits arbitrary small invariant neighborhoods, namely for every open $U$, $O_x\subset U\subset M$, there exists a saturated open $V$ such that $O_x\subset V\subset U$.

\begin{example}
Let $M\times_N M\toto M$ be the Lie groupoid arising from a submersion $q:M\to N$.
An orbit $O_x$ is stable if and only if $q$ satisfies the tube principle at $q(x)$, i.e. if it is proper at $q(x)$ (cf. \ref{etheorem}).
In particular, a stable orbit has to be compact.
\end{example}

As in previous example, compactness is always a necessary condition for stability.

\begin{proposition}
A stable orbit $O_x$ of a Lie groupoid $G\toto M$ is compact.
\end{proposition}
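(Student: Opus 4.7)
The plan is to prove the contrapositive: if $O := O_x$ is not compact, then $O$ fails to be stable. The underlying idea is that, by the homogeneity of $G$-orbits under bisection translations, any saturated open containing $O$ must carry an open ball at $x$ over to a comparable open around every other point of $O$. One then exhibits an open neighbourhood of $O$ that "pinches" to zero thickness along an escaping sequence in $O$, which blocks the existence of any saturated open inside it.

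Fix $x \in O$ and, using local compactness of $M$, a relatively compact chart $\phi\colon W_0 \to \mathbb R^n$ with $\phi(x)=0$. Since $O$ carries the structure of a second countable manifold and is by assumption non-compact, pick a sequence $(y_n)\subset O$ with no convergent subsequence in the intrinsic topology of $O$. For each $n$ choose an arrow $g_n\colon x\to y_n$ and a bisection $B_n\ni g_n$, with underlying diffeomorphism $f_n\colon s(B_n)\to t(B_n)$; after shrinking, $t(B_n)$ is relatively compact. Pick a decreasing $\epsilon_n\searrow 0$ with $\phi^{-1}(B(0,\epsilon_n))\subset s(B_n)$ for each $n$, and set
\[
U\;:=\;\Big(\bigcup_n f_n\bigl(\phi^{-1}(B(0,\epsilon_n))\bigr)\Big)\;\cup\;\Omega,
\]
where $\Omega$ is an open neighbourhood of $O\setminus\{y_n\}_n$ chosen to avoid accumulating at the pinching tubes. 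Then $U$ is open, contains $O$, and its intersection with each $t(B_n)$ has "radius $\epsilon_n$" in the chart $f_n\circ\phi^{-1}$.

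Suppose, for contradiction, a saturated open $V$ satisfies $O\subset V\subset U$. Openness of $V$ at $x$ gives $\delta>0$ with $\phi^{-1}(B(0,\delta))\subset V$. Because $V$ is saturated and $f_n$, as the underlying map of a bisection, sends each point of $s(B_n)$ to another in the same $G$-orbit, we have $f_n\bigl(\phi^{-1}(B(0,\delta))\cap s(B_n)\bigr)\subset V\cap t(B_n)$. For $n$ large enough that $\phi^{-1}(B(0,\delta))\subset s(B_n)$, this image sits inside $V\cap t(B_n)\subset U\cap t(B_n)=f_n(\phi^{-1}(B(0,\epsilon_n)))$; injectivity of $f_n$ then forces $B(0,\delta)\subset B(0,\epsilon_n)$, i.e.\ $\delta\le \epsilon_n$, contradicting $\epsilon_n\searrow 0$.

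The main obstacle is ensuring that the source-domains $s(B_n)$ eventually contain a prescribed ball around $x$, so that the bisection-translation argument actually applies for large $n$; bisections through a given arrow may, a priori, have arbitrarily small domains. I expect this can be arranged by using that $s$ is a submersion and exploiting the freedom to pick local sections through $g_n$ with controlled derivative, or by selecting $B_n$ adaptively in a diagonal manner using second countability of $M$. A secondary subtlety arises when $(y_n)$ admits convergent subsequences in the ambient topology of $M$ (where $O$ sits in a non-embedded way), in which case the tubes $f_n(\phi^{-1}(B(0,\epsilon_n)))$ and the exterior $\Omega$ must be chosen carefully to produce a genuine open set $U$.
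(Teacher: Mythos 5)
Your overall strategy (contrapositive, escaping sequence, transporting a neighbourhood of $x$ along the orbit, and an open set around $O$ that pinches at infinity) is the same as the paper's, but your implementation via bisection tubes contains a gap that you do not flag and that I believe is fatal as written. The two requirements $O\subset U$ and $U\cap t(B_n)=f_n\bigl(\phi^{-1}(B(0,\epsilon_n))\bigr)$ are incompatible whenever $\dim O\geq 1$. Indeed, $f_n$ is the underlying map of a bisection, so it preserves orbits and hence $f_n\bigl(O\cap s(B_n)\bigr)=O\cap t(B_n)$; combining this with $O\cap t(B_n)\subset U\cap t(B_n)$ and injectivity of $f_n$ forces $O\cap s(B_n)\subset\phi^{-1}(B(0,\epsilon_n))$. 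But $s(B_n)$ contains $\phi^{-1}(B(0,\epsilon_n))$ and is an open neighbourhood of $x$, so it meets $O$ in (at least) the plaque of $O$ through $x$, a positive-dimensional piece that reaches the boundary of the $\epsilon_n$-ball; it cannot be confined to that ball. In other words, your tubes pinch in \emph{all} directions, including along the orbit itself, whereas $U$ must remain fat in the orbit directions in order to contain $O$. Patching this by letting $\Omega$ bleed into $t(B_n)$ destroys the equality $U\cap t(B_n)=f_n\bigl(\phi^{-1}(B(0,\epsilon_n))\bigr)$ on which your final contradiction rests. (The first obstacle you do flag, the lack of a uniform lower bound on $s(B_n)$, is by contrast repairable: choose $\epsilon_n$ adaptively, smaller than the radius $r_n$ of a ball contained in $\phi(s(B_n))$; the resulting inequality $\min(\delta,r_n)\leq\epsilon_n$ still gives a contradiction.)

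The moral is that the pinching must occur only transversally to the orbit, and the paper achieves exactly this with a much lighter device. It picks a closed discrete sequence $(x_n)\subset O_x$, lets $\langle B_n\rangle$ denote the saturation of the ball of radius $1/n$ at $x$ (an invariant open set containing $O_x$ which any saturated open $V\supset O_x$ must eventually contain, since $V$ contains some ball at $x$ and saturating preserves the inclusion), chooses $y_n\in\langle B_n\rangle\setminus O_x$ with $d(x_n,y_n)<1/n$, and takes $U=M\setminus\{y_n\}_n$. Deleting points chosen \emph{off} the orbit makes $O_x\subset U$ automatic, and working with saturations of shrinking balls at $x$ replaces all the bisection bookkeeping. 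I recommend reworking your argument along these lines; your bisection-translation idea is essentially a hands-on version of the statement that saturating a ball at $x$ spreads it along the whole orbit.
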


\begin{proof}
Let $d$ be a distance defining the topology of $M$.
If $O_x$ is not compact then it contains an infinite discrete set $\{x_n\}\subset O_x$. Let $B_n$ be the $d$-ball centered at $x$ of radius $1/n$, and let $\langle B_n\rangle$ be its saturation. For each $n$ we can take a point $y_n\in \langle B_n\rangle\setminus O_x$ such that $d(x_n,y_n)<1/n$. Then $M\setminus \{y_n\}$ is open and does not contain any invariant neighborhood, hence the orbit is not stable.
\end{proof}

Let $G\toto M$ be a Lie groupoid and let $x\in M$. We say that $G$ is {\bf s-proper} at $x$ if the source map $s:G\to M$ is proper at $x$. Note that s-proper at $x$ implies proper at $x$.

\begin{proposition}\label{stable2}
The following are equivalent:
\begin{enumerate}
\item $G$ is s-proper at $x$;
\item $G$ is proper at $x$ and $O_x$ is stable;
\item $G$ is s-locally trivial at $x$ and $G_x$ and $O_x$ are compact.
\end{enumerate}
\end{proposition}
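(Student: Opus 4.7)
The plan is to use Ehresmann's theorem (Proposition \ref{etheorem}) to bridge (1) and (3), and to exploit the tube principle together with the principal-bundle structure of $t_x\colon G(-,x)\to O_x$ to bridge (1) and (2). For (1)$\iff$(3), Proposition \ref{etheorem} applied to the submersion $s$ says that $s$ is proper at $x$ if and only if it is locally trivial at $x$ with compact fibre $G(-,x)$; and since $G(-,x)\to O_x$ is a principal $G_x$-bundle, $G(-,x)$ is compact if and only if both $G_x$ and $O_x$ are compact (forward direction: $O_x$ is a continuous image of $G(-,x)$ and $G_x$ is a closed fibre of $t_x$; backward direction: the total space of a locally trivial bundle with compact fibre over a compact base is compact).

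For (1)$\Rightarrow$(2), $s$-properness at $x$ directly implies that $\rho=(t,s)$ is proper at $(x,x)$, as noted just before the statement, so $G$ is proper at $x$. For stability of $O_x$, given an open $U\supset O_x$ in $M$, the open set $t^{-1}(U)\subset G$ contains $s^{-1}(x)=G(-,x)$, so the tube principle for $s$ at $x$ (Proposition \ref{charpatx}) yields an open $V\ni x$ in $M$ with $s^{-1}(V)\subset t^{-1}(U)$. Then the saturation $\langle V\rangle=t(s^{-1}(V))$ is open (image under the open map $t$) and satisfies $O_x\subset\langle V\rangle\subset U$, so $O_x$ is stable.

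The main obstacle is (2)$\Rightarrow$(1): upgrading punctual properness of $\rho$ plus stability into $s$-properness at $x$. The strategy is to build a compact saturated neighbourhood $K$ of $O_x$ inside a proper restriction of $G$. By Remark \ref{proper.neighborhood}, choose a saturated open $V_0\ni x$ with $G_{V_0}$ proper; using that $O_x$ is compact (the proposition preceding \ref{stable2}), pick a relatively compact open $U$ with $O_x\subset U\subset\overline{U}\subset V_0$, and by stability pick a saturated open $V_1$ with $O_x\subset V_1\subset U$. Set $K=\overline{V_1}\subset V_0$, which is compact; a brief lifting argument (any $y$ equivalent to $z\in\overline{V_1}$ is approached by lifting a sequence $z_n\to z$, $z_n\in V_1$, through the submersion $s$) shows $K$ is saturated in $V_0$. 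Then $s^{-1}(K)=\rho^{-1}(K\times K)\cap G_{V_0}$ is compact by properness of $G_{V_0}$, since $s(g)\in K\subset V_0$ forces $t(g)\in\langle K\rangle=K$.

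The tube principle for $s$ at $x$ now follows by a standard argument: for any open $U'\supset G(-,x)$ in $G$, the set $s(s^{-1}(K)\setminus U')\subset K$ is compact and avoids $x$, so a sufficiently small open $V\ni x$ in $M$ contained in $\mathrm{int}(K)\setminus s(s^{-1}(K)\setminus U')$ satisfies $s^{-1}(V)\subset U'$. Combined with the compactness of $G(-,x)$ (which holds because $G_x$ is compact as a fibre of $\rho$ at $(x,x)$ and $O_x$ is compact by stability), Proposition \ref{etheorem} delivers (1), closing the argument.
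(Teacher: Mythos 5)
Your proof is correct, and two of its three legs coincide with the paper's: the equivalence (1)$\iff$(3) via Ehresmann's theorem \ref{etheorem} together with the bundle $G_x\action G(-,x)\to O_x$, and the implication (1)$\Rightarrow$(2) via the tube principle applied to $G(-,x)\subset s^{-1}(O_x)\subset t^{-1}(U)$, are exactly the arguments in the text. Where you genuinely diverge is (2)$\Rightarrow$(1). The paper argues sequentially: given arrows $y_n\xfrom{g_n}x_n$ with $x_n\to x$, it splits into the case where infinitely many $y_n$ lie in $O_x$ (using compactness of the orbit to make the targets accumulate) and the case where none do (using stability to show $M\setminus\{y_n\}$ cannot be an open saturated-neighbourhood-free set, again forcing the targets to accumulate), and then extracts a convergent subsequence of the $g_n$ from properness of the anchor. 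You instead build a compact saturated neighbourhood $K=\overline{V_1}$ of $O_x$ inside a saturated open $V_0$ on which the restriction $G_{V_0}$ is proper, observe that $s^{-1}(K)=\rho^{-1}(K\times K)$ is compact, and verify the tube principle for $s$ at $x$ by the standard ``compact complement'' argument before closing with \ref{charpatx}. Your route is longer, but it has the merit of making explicit the reduction to a genuinely proper restriction $G_{V_0}$ — the paper's phrase ``since $G$ is proper'' silently performs this reduction via \ref{proper.neighborhood}, and one must also note that the limit of the targets stays in $V_0$ — and of exhibiting the tube for $s$ directly rather than through the sequential characterization of properness. The paper's route is shorter and avoids the auxiliary construction of $K$ and the lifting argument needed to see that $\overline{V_1}$ is saturated.
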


\begin{proof}
The s-fiber $G(-,x)$ is compact if and only if the isotropy $G_x$ and the orbit $O_x$ are so, for these three fit into the isotropy bundle $G_x\action G(-,x)\to O_x$. This, together with Ehresmann theorem \ref{etheorem}, give the equivalence (1)$\iff$(3).

\smallskip

To prove (1)$\then$(2), suppose that $s:G\to M$ is proper at $x$. Given an open $U$ containing the orbit $O_x$, since $s^{-1}(O_x)\subset t^{-1}(U)$, by the tube principle (cf. \ref{charpatx}), we can take an open $V$, $x\in V\subset M$, such that $s^{-1}(V)\subset t^{-1}(U)$. Then $t(s^{-1}(V))$ is open invariant with $O_x\subset t(s^{-1}(V))\subset U$.  

\smallskip

Finally, to prove (2)$\then$(1), assume that $G\toto M$ is proper at $x$ and $O_x$ is stable, and thus compact.
Let $(y_n\xfrom{g_n} x_n)\subset G$ be such that $x_n$ converges to $x$. We may assume either that infinitely many $y_n$ belong to $O_x$, or that $y_n\notin O_x$ for any $n$.
In the first case, since the orbit is compact, there is a subsequence $(g_{n_k})$ whose source and target converge, and since $G$ is proper $(g_{n_k})$ has to have a convergent subsequence.
In the second case, $M\setminus\{y_n\}$ contains $O_x$ but does not contain any saturated open set. It follows that $M\setminus\{y_n\}$ is not open and then $\{y_n\}$ has to have a convergent subsequence and we can conclude as before.
\end{proof}

Particular cases and other versions of the previous result can be found in the literature (cf. \cite[4.4]{cs}, \cite[4.10]{cs}, \cite[3.3]{weinstein}).


%
%
%
%



\subsection{Zung's Theorem and the local structure of separated stacks}

Given a Lie groupoid $G\toto M$, we should think of the normal representation $G_x\action N_xO$ as the tangent space of the stack $[M/G]$ at $[x]$, for it encodes the infinitesimal linear information around the point.
$$T_{[x]}[M/G] \quad\cong\quad G_x\action N_xO$$
If $G\toto M$ is proper, it turns out that a neighborhood of $[x]$ in $[M/G]$ can be 
reconstructed as the differentiable stack of the action $G_x\action N_xO$.
Note that, by the existence of slices, it is enough to study neighborhoods of fixed points (cf. \ref{exissli}, \ref{slice.chart}).
In this sense we have Zung's Theorem, which is both a particular case of the Linearization Theorem \ref{linearization}, and the key step in proving it. 


\begin{theorem}[Zung]\label{zung}
Let $G\toto M$ be a Lie groupoid and let $x\in M$ be a fixed point. If $G$ is proper at $x$ then there is an open $x\in U\subset M$ and an isomorphism
 between the restriction and the action groupoid of the normal representation at $x$.
$$(G_U\toto U)\cong (G_x\ltimes T_xM\toto T_xM)$$
\end{theorem}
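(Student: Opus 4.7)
The plan is to exploit compactness of the isotropy $G_x$ (a consequence of properness at the fixed point) to run a Bochner-style linearization, and then upgrade an $s$-local trivialization of $G$ to a full Lie groupoid isomorphism.

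First, a reduction: since $x$ is fixed, the orbit $O_x = \{x\}$ is compact and $G_x = \rho^{-1}(x,x)$ is compact. After shrinking $M$ to a neighborhood on which $G$ is proper (cf.\ \ref{proper.neighborhood}), the tube principle applied to $\rho$ shows that $\{x\}$ is a stable orbit, so Proposition \ref{stable2} upgrades properness at $x$ to $s$-properness at $x$. Ehresmann's theorem \ref{etheorem} then furnishes an open $V \ni x$ and a diffeomorphism $\Phi : G_x \times V \to s^{-1}(V)$ trivializing the source and restricting to the identity on $G_x \times \{x\}$.

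Second, the averaging step: for each $g \in G_x$ set $t_g := t \circ \Phi(g, -) : V \to M$; these maps fix $x$, and their derivatives at $x$ realize precisely the normal representation $\eta : G_x \to GL(T_xM)$ (cf.\ \S\ref{section.normal.representation}). Fix an arbitrary chart $\phi_0 : V \to T_xM$ with $\phi_0(x) = 0$ and $d\phi_0|_x = \id$, and average against the normalized Haar measure $\mu$ on the compact group $G_x$:
\[
\phi(y) := \int_{G_x} \eta(g)^{-1} \, \phi_0(t_g(y)) \, d\mu(g).
\]
On a sufficiently small invariant neighborhood $U \ni x$, $\phi$ is a diffeomorphism onto its image, with $d\phi|_x = \id$. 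By the classical Bochner argument, together with an iterative adjustment of $\Phi$ that corrects higher-order errors in the would-be action $\{t_g\}$, one arranges that $\phi \circ t_g = \eta(g) \circ \phi$ on $U$ and, simultaneously, that $\Phi$ is multiplicative: $\Phi(g_2, t_{g_1}(y)) \cdot \Phi(g_1, y) = \Phi(g_2 g_1, y)$.

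Third, identifying $U$ with $\phi(U) \subset T_xM$ via $\phi$, the adjusted $\Phi : G_x \times U \to G_U$ is now a smooth map of Lie groupoids preserving source, target, units, and multiplication; it is bijective by construction, and since its differential at $(1_x, x)$ is an isomorphism by $d\phi|_x = \id$ and the identification on $G_x$, a standard rank argument promotes it to a Lie groupoid isomorphism $G_x \ltimes \phi(U) \xrightarrow{\cong} G_U$. As $\phi(U)$ is an open neighborhood of $0$ in $T_xM$ and the action groupoid construction localizes, this gives the desired identification.

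The main obstacle lies in the second step: the family $\{t_g\}_{g\in G_x}$ is not a $G_x$-action on $V$ on the nose (only up to the twist introduced by $\Phi$), so a single Haar-averaging does not suffice to linearize it. One must iteratively refine $\Phi$, at each pass using Haar-averaging over $G_x$ to kill a higher-order defect, so as to simultaneously make the chart $\phi$ equivariant and the trivialization $\Phi$ multiplicative. This is the heart of the proof, and is where compactness of $G_x$ together with properness of $G_V$ are used essentially; it is the step simplified in \cite{cs} and further reduced here via \ref{step1}.
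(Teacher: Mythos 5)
There is a genuine gap, and you have located it yourself: the entire difficulty of Zung's theorem is concentrated in the sentence ``together with an iterative adjustment of $\Phi$ that corrects higher-order errors in the would-be action $\{t_g\}$, one arranges that \dots $\Phi$ is multiplicative.'' The classical Bochner argument linearizes an honest action of a compact group by a single Haar average of a chart; here the family $\{t_g\}$ is only a near-action ($t_{g_2}\circ t_{g_1}\neq t_{g_2g_1}$ in general), and producing from it a genuine action of $G_x$ near $x$ --- equivalently, making the source-trivialization $\Phi$ multiplicative --- \emph{is} the theorem. Asserting that an iteration ``arranges'' this is not a proof: one must specify the correction at each stage, show that the multiplicativity defect decreases in a controlled norm, and manage the shrinking of domains; this is exactly the delicate fast-convergence scheme of Zung's original paper, and it does not follow from any standard statement you can cite. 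Once that step is granted, everything else in your write-up (the reduction to $s$-properness via stability of the fixed orbit, the equivariant chart, the promotion of $\Phi$ to a groupoid isomorphism on an invariant neighborhood) is essentially correct, but as written the proof is missing its core.

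For comparison, the proof in the text deliberately avoids this nonlinear iteration. After the reduction of \ref{step1} (which plays the role of your first step: $M=\R^n$, $G\subset G_0\times\R^n$ with source the projection and units normalized), it interpolates between $G$ and its linear model by a one-parameter family $\tilde G\toto\R^n\times\R$ obtained by rescaling the structure maps, and then constructs a \emph{multiplicative vector field} on $\tilde G$ projecting to $\partial_\epsilon$. The averaging over the compact isotropy is applied there to a vector field --- a linear object --- so a single pass suffices and no convergence argument is needed; the time-one flow then carries the linear fiber $\epsilon=0$ onto the fiber $\epsilon=1$ and yields the isomorphism. If you want to keep your Bochner-style route, you must either carry out Zung's iteration in full or replace your second step by this deformation argument.
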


As explained above, the following is obtained as an immediate corollary.

\begin{corollary}\label{separated}
A separated stack $[M/G]$ is locally isomorphic to the stack underlying a linear action of a compact group.
\end{corollary}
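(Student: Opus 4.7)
The plan is to deduce the local linearization around any point $[x]\in[M/G]$ from Zung's theorem applied at a fixed point, using slices to reduce to that case. Fix a representative $x\in M$. Since $G\toto M$ is proper it is proper at $x$, so by Proposition \ref{exissli} there is a slice $x\in S\subset M$, and the restriction $G_S\toto S$ is a well-defined Lie groupoid whose inclusion in $G$ is fully faithful. Being a restriction of a proper groupoid, $G_S\toto S$ is again proper.

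The key observation is that $x$ is a \emph{fixed point} of $G_S\toto S$: the $G_S$-orbit of $x$ is contained in $S\cap O_x=\{x\}$ by the very definition of a slice. Thus Zung's theorem \ref{zung} applies to $G_S$ at $x$, producing an open $x\in U\subset S$ and an isomorphism of Lie groupoids
$$(G_S)_U \;\cong\; G_x\ltimes T_xS.$$
The isotropy $G_x$ is compact because $G$ is proper, the action $G_x\action T_xS$ is linear, and under the slice identification $T_xS\cong N_xO$ it coincides with the normal representation at $x$ restricted to the isotropy, as one should expect.

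It remains to transfer this isomorphism to a neighborhood of $[x]$ in the orbispace. Let $V\subset M$ be the saturation of $U$; the argument of Remark \ref{slice.chart}, applied to $U$ instead of $S$, shows that $V$ is open and that the inclusion $(G_S)_U\hookrightarrow G_V$ is a weak equivalence, giving identifications of orbispaces
$$[V/G_V] \;\cong\; [U/(G_S)_U] \;\cong\; [T_xS/(G_x\ltimes T_xS)].$$
Since $[V/G_V]$ is an open neighborhood of $[x]$ in $[M/G]$, this proves the local linearization. Once Zung's theorem and the slice machinery of Proposition \ref{exissli} and Remark \ref{slice.chart} are in hand, the corollary assembles essentially for free; I do not foresee a real obstacle, only the routine verification that the open $U$ provided by Zung can be kept inside the domain where the slice chart equivalence is valid, which is automatic by shrinking $U$ further if necessary.
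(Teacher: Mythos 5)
Your proposal is correct and follows the same route the paper intends: take a slice $S$ at $x$ (Proposition \ref{exissli}), observe that $x$ is a fixed point of the proper groupoid $G_S\toto S$, apply Zung's Theorem \ref{zung} there, and transfer the resulting isomorphism back to a saturated neighborhood via the equivalence of Remark \ref{slice.chart}. The paper states the corollary as immediate from exactly this reduction, so no further comparison is needed.
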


The first proof of Zung's Theorem appeared in \cite{zung}, see also \cite{dz}. 
In the remaining of this section we overview the proof presented in \cite{cs}.
To begin with, we establish the following reduction, which is a strengthened version of \cite[2.2]{cs}.


\begin{proposition}\label{step1}
Given $G$ and $x$ as in \ref{zung}, there exists an open $x\in U \subset M$ and a diffeomorphism
$G_U\cong G_x\times \R^n$ extending the obvious one $G_x\cong G_x\times 0$, and such that the source  corresponds to the projection $G_x\times\R^n\to\R^n$ and the units to $1\times\R^n$.
\end{proposition}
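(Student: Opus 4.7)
The plan is to combine the properness of $G$ at $x$ with Ehresmann's theorem to trivialize the source map near the compact fiber $G_x$, and then to twist the trivialization so that the unit section corresponds to $\{1\}\times\R^n$. First, by Proposition~\ref{openproper} applied to the anchor $\rho=(t,s)$ I would pass to a relatively compact open $U_0\ni x$ on which $\rho$ is proper, replacing $G$ by $G_{U_0}$ so that $G$ becomes proper. I would then establish that $s:G\to M$ is proper at $x$: given $(g_n)\subset G$ with $s(g_n)\to x$, the targets lie in the relatively compact $\bar U_0$, so after passing to a subsequence $t(g_n)\to y$ and $\rho(g_n)\to (y,x)$; closedness of $\rho(G)$ (a consequence of properness) forces $y$ to be in the $G$-orbit of $x$, and since $x$ is a fixed point, $y=x$. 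Properness of $\rho$ at $(x,x)$ then delivers a convergent subsequence of $(g_n)$ whose limit sits in the compact fiber $s^{-1}(x)=G_x$.

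Next, applying Ehresmann's theorem (Proposition~\ref{etheorem}) together with the construction in Proposition~\ref{strusubm} to the submersion $s$ at the compact fiber $G_x$, I obtain an open $V\cong\R^n$ around $x$ and a diffeomorphism $\Psi_0:s^{-1}(V)\xrightarrow{\cong} G_x\times V$ over $V$ with $\Psi_0(g)=(g,x)$ for every $g\in G_x$, i.e.\ extending the obvious inclusion $G_x\hookrightarrow G_x\times\{x\}$. The unit section then corresponds under $\Psi_0$ to $y\mapsto(\alpha(y),y)$ for a smooth map $\alpha:V\to G_x$ with $\alpha(x)=1$. Post-composing $\Psi_0$ with the diffeomorphism $(h,y)\mapsto(h\alpha(y)^{-1},y)$ of $G_x\times V$ produces a new trivialization $\Psi$ that still satisfies $s\circ\Psi=\pr_V$ and restricts to the identity on $G_x\times\{x\}$, and moreover sends $u(y)$ to $(1,y)$ for every $y\in V$.

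To finish I would select a saturated open $U\subset V$ containing $x$ with $U\cong\R^n$. By Proposition~\ref{stable2}, the $s$-properness of $G$ at $x$ is equivalent to the stability of the fixed orbit $\{x\}$, hence saturated open neighborhoods inside $V$ may be taken arbitrarily small; saturation then gives $G_U=s^{-1}(U)\cap t^{-1}(U)=s^{-1}(U)$, and $\Psi$ restricts to the desired diffeomorphism $G_U\cong G_x\times\R^n$. The principal obstacle is reconciling the two requirements on $U$, namely being saturated \emph{and} diffeomorphic to $\R^n$: stability provides saturated opens of arbitrarily small size but a priori of complicated topology, so some additional work is needed, exploiting the compactness of $G_x$ and the existence of invariant Euclidean balls in $T_xM$ under the normal representation, to produce a saturated $U$ of the required shape via a careful choice of chart at the fixed point.
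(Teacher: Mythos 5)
Your first two steps (trivializing the source near the compact fiber $G_x$ via the tube principle and the structure theorem for submersions, then twisting by $(h,y)\mapsto(h\alpha(y)^{-1},y)$ to normalize the unit section) coincide with the paper's argument. The problem is the last step, which you correctly identify as the principal obstacle but do not actually resolve. Your suggested fix --- ``invariant Euclidean balls in $T_xM$ under the normal representation'' --- cannot work as stated: saturation of an open $W\ni x$ is governed by the \emph{nonlinear} target map, namely $\langle W\rangle = t(s^{-1}(W))$, not by the linear isotropy action on $T_xM$; producing opens that are simultaneously invariant and ball-like for the actual (nonlinear) action is essentially the content of the theorem one is trying to prove, so invoking the linear model here is circular.

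The paper closes this gap with a concrete star-shapedness argument. After the reduction $G\subset G_0\times\R^n$ with $s=\pr$, the saturation of a small open $W$ is
$$t(s^{-1}(W))=\bigcup_{g\in G_0}t(g\times W)=\bigcup_{g\in G_0}\phi_g(W),$$
where each $\phi_g=t(g,\cdot)$ is a local diffeomorphism fixing $0$ with invertible differential. One then uses the elementary lemma that such a map sends sufficiently small balls centered at $0$ to sets starred at $0$; by compactness of $G_0$ a single radius works for all $g$, so the saturation of that ball is a union of opens all starred at $0$, hence itself a star-shaped open, hence diffeomorphic to $\R^n$. This is the missing ingredient: it reconciles ``saturated'' with ``diffeomorphic to $\R^n$'' without any appeal to linearity, and it is also where the hypothesis that $U$ can be chosen to trivialize the source (via $s$-properness at the fixed point, cf.\ \ref{stable2}) gets combined with the topological control on the shape of the saturation. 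Your argument for $s$-properness of $G$ at the fixed point is fine in spirit (though you should shrink once more so that the limit of the targets is guaranteed to land inside the open where $\rho$ is proper), but without the star-shapedness step the proof is incomplete.
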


\begin{proof}
To start with, by restricting $G$ to a small ball-like open around $x$, it is clear that we can assume $M=\R^n$ and $x=0$.

\smallskip

Moreover, we can assume that $G\subset G_0\times\R^n$, with source the projection. In fact, since $0$ is a fixed point we have $s^{-1}(0)=G_0$ and by the structure theorem for submersions (cf. \ref{strusubm}) there is an open $V$, $G_0\subset V\subset G$, on which the source looks as a projection. Since the anchor $\rho$ is proper at $0$ the open $V$ must contain a tube $G_W=\rho^{-1}(W\times W)\subset V$ (cf. \ref{charpatx}) and we can of course take $W\cong\R^n$.

\smallskip

Since $su=\id$ and $s$ is just the projection, the unit map $u:\R^n\to G_0\times\R^n$ can be written as $v\mapsto (u_1(v),v)$. Thus, in order to associate the units with the points $(1,v)$, we just need to compose the inclusion $G\subset G_0\times\R^n$ with the diffeomorphism $G_0\times\R^n\to G_0\times\R^n$, $(g,v)\mapsto(gu_1(v)^{-1},v)$.

\smallskip

To conclude we need to construct an open $x\in U\subset\R^n$ such that: it trivializes the source map, it is saturated, and it is diffeomorphic to $\R^n$.
Any small enough open trivializes the source, for $s:G\subset G_0\times\R^n\to\R^n$ is proper at $0$. 
Moreover, since $0$ is a stable orbit (cf. \ref{stable2}) there are arbitrary small saturated opens. The saturation of a set $W$ can be written as $ts^{-1}(W)$. The problem thus is how to take $W$ so as to get $ts^{-1}(W)\cong\R^n$. Next we provide an argument whose details are left to the reader.

\smallskip

Given $W$ a small open trivializing the source, its saturation can be written as
$$t(s^{-1}(W))=t(G_x\times W)=\bigcup_{g\in G_0}t(g\times W)$$
It is well-known that a star-shaped open in $\R^n$ is diffeomorphic to $\R^n$, thus it is enough to show that $t(g\times W)$ is starred at $0$ for all $g$.
Fixed $g$, the formula $x\mapsto t(g,x)$ defines a diffeomorphisms $\phi_g$ in a neighborhood of $0$ with inverse $\psi_g$.
The result now follows from the following lemma:

{\it If a smooth map $\psi:U\to U'$, $U,U'\subset\R^n$, $\psi(0)=0$ has $D\psi_0$ invertible then it maps small balls centered at $0$ to starred sets at $0$.}

In our case, since $G_0$ is compact, we can take the same ball for all $\psi_g$.
\end{proof}

In light of Proposition \ref{step1}, in order to prove Zung's theorem we can assume that 
$M=\R^n$, that $x=0$ is a fixed point with isotropy $G_0$, that $G=G_0\times\R^n$ and the source and unit maps are as follows.
$$(G\toto M)=(G_0\times\R^n\toto\R^n) \qquad s(g,v)=v \quad u(v)=(1,v)$$
Out of $G$ we  construct a new Lie groupoid $\tilde G$,
that can be seen as a 1-parameter family containing $G$ and the local model.
Its objects, arrows, source and unit maps are given by
$$(\tilde G\toto \tilde M)=(G_0\times\R^n\times\R\toto\R^n\times\R)
\qquad \tilde s(g,v,\epsilon)=(v,\epsilon)
\quad \tilde u(v,\epsilon)=(1,v,\epsilon)$$
whereas the other structural maps $\tilde t, \tilde m, \tilde i$ are defined by canonically deforming $t$, $m$, $i$ into their linearization. This is done by means of the following lemma.

\smallskip

{\it Let $A$ be a manifold and let $f:A\times\R^q\to \R$ be smooth and such that $f(x,0)=0$ for all $x$.
Then the function $\tilde f:A\times\R^q\times\R\to \R$ defined below is smooth.
$$\tilde f(x,y,\epsilon)=\begin{cases}\frac{1}{\epsilon}f(x,\epsilon y) & \epsilon\neq 0\\
\partial_yf|_{(x,0)} \cdot y& \epsilon=0\end{cases}$$}

\smallskip

For instance, the target map is defined by
$\tilde t(g,v,\epsilon)=(\frac{1}{\epsilon}t(g,\epsilon v),\epsilon)$ for $\epsilon\neq 0$ and $\tilde t(g,v,0)=(\partial_v t|_{(g,0)} \cdot v,0)$. The multiplication and inverse maps are defined similarly.
With these definitions, it is clear that $\tilde G\toto\tilde M$ is a well-defined Lie groupoid, and it is s-proper for $G_0$ is compact.

\begin{remark}
Let us explain how to see $\tilde G$ as a 1-parameter family. The projection
$$(\tilde G\toto \tilde M)\to(\R\toto\R) \qquad (g,v,\epsilon)\mapsto \epsilon \quad (v,\epsilon)\mapsto\epsilon$$
is a surjective map of groupoids, hence for each $\epsilon\in \R$ the fibers over $\epsilon$ give a new Lie groupoid
$\tilde G_\epsilon\toto\tilde M_\epsilon$
whose structural maps are induced by those of $\tilde G$. 
For $\epsilon=1$ this is isomorphic to $G$, say $\tilde G_1\cong G$, and for $\epsilon=0$ this can be naturally identified with the action groupoid of the normal representation at $0$, say $\tilde G_0\cong G_0\ltimes\R^n$.
\end{remark}

The last step in proving \ref{zung} consists of proving that the family $\tilde G$ yields a trivial deformation near $0$. Such a trivialization is obtained by the flow of a multiplicative vector field.
A {\bf vector field} $X=(X_G,X_M)$ on the a Lie groupoid $G\toto M$ is just a pair of vector fields $X_G,X_M$ in $G,M$ respectively. Such a vector field is {\bf multiplicative} if it defines a groupoid map
$$(G\toto M)\to(TG\toto TM)$$
The flow of a multiplicative field is by Lie groupoid morphisms (cf. \cite{mx}), namely 
for each $\epsilon\in\R$ the $\epsilon$-flow is a groupoid map
$\phi_X^\epsilon:D\to G$ defined over an open subgroupoid $D\subset G$.

\begin{proposition}
The Lie groupoid $(\tilde G\toto \tilde M)$ constructed above admits a multiplicative vector field
$X$ such that $X\sim_\pi \partial_\epsilon$ and $X(g,0,\epsilon)=\partial_\epsilon$ for all $g$, $\epsilon$.
\end{proposition}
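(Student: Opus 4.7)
The plan is to construct $X$ in two stages: first produce a source-, target-, and unit-compatible vector field on $\tilde G$ lifting $\partial_\epsilon$ and satisfying the vanishing condition along $v=0$; then average over the compact isotropy $G_0$ in order to enforce multiplicativity.

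Since $\tilde s$ is the projection $(g,v,\epsilon)\mapsto(v,\epsilon)$, setting $X_{\tilde M}=\partial_\epsilon$ and writing $X_{\tilde G}=(A,0,\partial_\epsilon)$ in the splitting $T\tilde G\cong TG_0\oplus T\R^n\oplus T\R$, source compatibility is automatic. The unit condition forces $A(1,v,\epsilon)=0$, and the target compatibility translates into $\partial_g T\cdot A=-\partial_\epsilon T$, where $T$ is the $\R^n$-component of $\tilde t$. Because $0$ is a fixed point, $T(g,0,\epsilon)=0$ identically, so both sides of this equation vanish along $v=0$; a local solution $A$ with $A(g,0,\epsilon)=0$ exists near the unit section by an implicit function argument, and can be extended smoothly to $\tilde G$ using a partition of unity that respects these vanishings.

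To upgrade the resulting candidate $Y=(A,0,\partial_\epsilon)$ to a multiplicative field, I would exploit that $\tilde G$ is s-proper: each source fibre is diffeomorphic to the compact group $G_0$. Averaging $Y$ over right translation by $G_0$ with respect to a Haar measure produces a new vector field $X$ that still projects to $\partial_\epsilon$, remains source- and unit-compatible, and now satisfies the multiplicativity condition. Since any Lie groupoid morphism preserves the orbit of the fixed point $0\in\tilde M$, and averaging is equivariant along the slice $v=0$, the value $X(g,0,\epsilon)=\partial_\epsilon$ is preserved.

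The main obstacle is precisely this averaging step: one must show that the obstruction to multiplicativity of the initial lift $Y$ lies in the image of the averaging operator, so that it can be killed without disrupting the source, target, and unit compatibilities or the vanishing at $v=0$. This rests on the vanishing of low-degree groupoid cohomology for s-proper groupoids, itself a consequence of Haar averaging on $G_0$. An alternative, more explicit route uses the rescaling $(g,v)\mapsto(g,v/\epsilon,\epsilon)$, which identifies $\tilde G_\epsilon$ with $G$ for $\epsilon\ne 0$ and reduces the problem to constructing a smooth extension of the corresponding multiplicative flow across the critical slice $\epsilon=0$.
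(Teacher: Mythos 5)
Your overall strategy --- lift $\partial_\epsilon$ to $\tilde G$ and then average over the compact source fibres to force multiplicativity --- is exactly the one the paper indicates; note, however, that the paper does not prove this proposition either, but explicitly defers it to \cite{cs} and records only the one-line sketch ``lift $\partial_\epsilon$ to the obvious vector field $(g,v,\epsilon)\mapsto\partial_\epsilon$ and use an averaging argument.'' The difficulty is that the averaging argument \emph{is} the content of the proposition, and you leave it open, saying yourself that it is ``the main obstacle.'' What is actually required is the concrete construction: fix a normalized Haar system on the proper groupoid $\tilde G$ (here essentially the Haar measure of $G_0$, since every source fibre is a copy of $G_0$), start from an $s$-projectable lift $Y$ of $\partial_\epsilon$, and define $X(g)$ by integrating suitable translates of $Y$ over $s^{-1}(t(g))$; one must then verify by direct computation that the result is multiplicative, still projects to $\partial_\epsilon$ under $\pi$, and equals $\partial_\epsilon$ along $v=0$ (the last point because the restriction of $\tilde G$ to the invariant slice $\{v=0\}$ is the bundle of groups $G_0\times\R\toto\R$, on which the naive lift is already multiplicative, and the normalized average fixes it there). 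Gesturing at ``vanishing of low-degree groupoid cohomology for s-proper groupoids'' names the right mechanism --- this is the vanishing of the deformation cohomology of a proper groupoid in \cite{cs} --- but without the formula and the verification the proof is not there.

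Your first stage is both unnecessary and not justified as written. The averaging only needs an $s$-lift of $\partial_\epsilon$, and the naive lift $A=0$, i.e.\ $(g,v,\epsilon)\mapsto\partial_\epsilon$, already is one; there is no need to arrange target-compatibility beforehand. Moreover, the claim that the inhomogeneous equation $\partial_gT\cdot A=-\partial_\epsilon T$ can be solved near the unit section and then ``extended smoothly to $\tilde G$ using a partition of unity'' does not stand: a partition of unity can patch local solutions of an affine equation only where local solutions exist, and away from the units there is no a priori reason for $-\partial_\epsilon T$ to lie pointwise in the image of $\partial_g T$ (that image is the tangent space to the orbit at the target point, by the description of the differential of the anchor in the paper). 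Pointwise solvability of this equation everywhere on $\tilde G$ is in fact a \emph{consequence} of the proposition, so deriving it only near the units begs the question. Drop this stage and concentrate the effort on writing out and verifying the averaging operator.
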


The proof of this can be consulted in \cite{cs}. Roughly, the idea is to lift $\partial_\epsilon$ to the obvious vector field $(g,v,\epsilon)\mapsto \partial_\epsilon$ on $\tilde G$ and then use an averaging argument to replace it by a multiplicative one $\tilde X$.

\bigskip

Once $X$ is constructed, the conclusion of \ref{zung} is routine. First, since the curves
$\gamma(t)=(g,0,t)$
are integral curves of $X$ we have that $G_0\times 0\times 0$ is contained in the open $D_1\subset \tilde G$ where the 1-flow $\phi_X^1$ is defined.
By the tube principle if $U$ is small enough then $G_0\times U\times 0\subset D_1$, and if in addition $U$ is an invariant ball-like open with respect to some $G_0$-invariant metric, we have an embedding
$$G_0\times U\times 0 \xto{\phi_X^1} G_0\times \R^n\times 1$$
whose image has to be of the form $G_V$, yielding an isomorphisms of Lie groupoids
$$(G_0\ltimes T_0M\toto T_0M)\cong(G_0\ltimes U\toto U)\cong(G_V\toto V).$$

\begin{remark}
It is well-known that the orbit space of a proper group action admits a smooth stratification (see eg. \cite{dk}).
It follow by Zung's Theorem that the orbit space $M/G$ of a proper groupoid is locally given by an action, and thus it locally admits smooth stratifications. 
A global stratification for the space $M/G$ is studied in \cite{ppt}.
\end{remark}

\begin{remark}
Corollary \ref{separated} suggests the existence of a chart description for separated stacks, similar to those of Chen's orbispaces \cite{chen} and Schwarz's Q-manifolds \cite{schwarz}.
We believe it would be interesting to investigate this and to better understand the relations among all these concepts.
We postpone this question to be treated elsewhere.
\end{remark}

\subsection{Linearization}

The Linearization Theorem \ref{linearization} unifies many linearization results such as Ehresmann Theorem for submersions, the Tube Theorem for group actions, and Reeb stability for foliations. 
It has becomed a milestone of the theory.

\bigskip

Given $G\toto M$ a Lie groupoid and $O\subset M$ an orbit, we can see $G_O\toto O$ as a subgroupoid of $G\toto M$ and also as the zero-section on the normal representation $NG_O\toto NO$.

$$(G\toto M) \from (G_O\toto O) \to (NG_O\toto NO)$$

The {\it linearization problem} consists in determine whether if this two groupoids are isomorphic in suitable neighborhoods. More precisely, $G$ is {\bf linearizable} at $O$ if there are open sets  $O\subset U\subset M$ and $O\subset V\subset NO$ and an isomorphism between the restrictions

$$(G_U\toto U) \cong ((NG_O)_V\toto V)$$ 

The linearization is called {\bf strict} if both $U,V$ can be taken to be saturated, and {\bf semistrict} if only $V$ can be taken saturated.
Within this language, Zung's Theorem \ref{zung} asserts that a proper groupoid can be linearized at a fixed point, and that the linearization is semistrict.

\begin{theorem}[Zung, Weinstein]\label{linearization}
If $G\toto M$ is proper at $x\in M$ then $G$ is linearizable at the orbit $O=O_x$. 
\end{theorem}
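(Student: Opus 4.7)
The strategy is to reduce to the fixed-point case (Zung's theorem \ref{zung}) by slicing transversally to the orbit.

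Since $G$ is proper at $x$, Proposition \ref{exissli} provides a slice $S$ at $x$, and the discussion that follows it shows that $G_S\toto S$ is a well-defined Lie groupoid. Because $S$ meets $O_x$ only at $x$ and is transverse to the orbits, the point $x$ is a fixed point of $G_S$; moreover $G_S$ is still proper at $x$ (its anchor is the base-change of the anchor of $G$ along $S\times S\to M\times M$, and properness at a point is preserved by base-change), the isotropy is $(G_S)_x=G_x$, and the canonical projection $T_xM\to N_xO$ identifies $T_xS$ with $N_xO$ as $G_x$-modules. Applying Zung's theorem \ref{zung} to $G_S$ at the fixed point $x$ therefore produces an open $x\in U_0\subset S$ and an isomorphism of Lie groupoids
$$\phi_0:(G_S)_{U_0}\xrightarrow{\;\cong\;}G_x\ltimes V_0,$$
where $V_0\subset N_xO$ is an open $G_x$-invariant neighborhood of the origin.

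It remains to propagate $\phi_0$ from the slice to a full neighborhood of $O$. Set $U=\langle U_0\rangle\subset M$ and $V=\langle V_0\rangle\subset NO$; by Remark \ref{slice.chart}, the inclusions $(G_S)_{U_0}\hookrightarrow G_U$ and $G_x\ltimes V_0\hookrightarrow(NG_O)_V$ are weak equivalences, so $\phi_0$ already yields an equivalence of orbispaces $[U/G_U]\simeq[V/(NG_O)_V]$. To upgrade this to an actual isomorphism of Lie groupoids one uses the gauge picture of Proposition \ref{anchsurj}: the transitive part $G_O\toto O$ comes from the principal bundle $G_x\raction G(-,x)\to O$, and both $U$ and $V$ can be identified with the associated fibre bundle $G(-,x)\times^{G_x}V_0$. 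The key observation is that every arrow of $G_U$ can be written in the form $g_1\cdot h\cdot g_2^{-1}$ with $h\in(G_S)_{U_0}$ and $g_1,g_2\in G(-,U_0)$, and similarly on the linear side; declaring $\phi_0$ to act by $\phi_0(h)$ on the middle factor and transporting equivariantly along the principal bundle then yields a well-defined Lie groupoid isomorphism $G_U\xrightarrow{\cong}(NG_O)_V$ extending $\phi_0$.

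The main obstacle is precisely this last extension step, where one must verify that the equivariant transport is well-defined and smooth. This demands some care with the shrinkings involved (choosing $V_0$ to be $G_x$-invariant via an invariant metric, as in the proof of \ref{zung}, and restricting $U$ so that the associated-bundle description is valid), but no new conceptual ingredient beyond Zung's theorem and the slice/gauge machinery developed earlier in the paper.
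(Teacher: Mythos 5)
Your reduction to the fixed-point case is correct and is exactly the paper's first move: restrict to a slice $S$ at $x$, note that $G_S\toto S$ is a well-defined proper groupoid with $x$ a fixed point and $(G_S)_x=G_x$ acting on $T_xS\cong N_xO$, and apply Zung's theorem to get $\phi_0:(G_S)_{U_0}\cong G_x\ltimes V_0$. The gap is in the extension step, which you correctly identify as the main obstacle but then wave away as ``no new conceptual ingredient.'' In fact it is the remaining geometric content of the theorem. Your transport recipe $k=g_1hg_2^{-1}\mapsto \Phi(g_1)\phi_0(h)\Phi(g_2)^{-1}$ presupposes a map $\Phi$ on $G(-,U_0)$, i.e.\ an isomorphism, equivariant over $\phi_0$, between the principal $(G_x\ltimes V_0)$-bundle $G(-,U_0)\to U$ and the corresponding bundle of the linear model; ``declaring $\phi_0$ to act on the middle factor'' does not determine such a $\Phi$, because the factorization of $k$ is non-canonical and the images of $g_1,g_2$ are not prescribed by $\phi_0$. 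Likewise your assertion that $U=\langle U_0\rangle$ ``can be identified with the associated fibre bundle $G(-,x)\times^{G_x}V_0$'' is not something you get from the slice/gauge machinery: it is essentially equivalent to the linearization statement itself (it already says the saturation of the slice fibers over $O$ with fiber $U_0$, which fails for non-proper groupoids, e.g.\ a submersion whose fibers near a compact fiber are non-compact).

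What is missing is precisely the step the paper supplies: after realizing the two equivalences by principal bibundles $P$ and $P'$ over $N_xO$ with equal central fibers $P_0=P_0'$, one must show that $P$ and $P'$ are isomorphic as principal $(G_x\ltimes N_xO)$-bundles on a neighborhood of the central fiber, and only then does \ref{gauge&bibundles} convert the orbispace equivalence into a genuine groupoid isomorphism $G_U\cong (NG_O)_V$. This is an \emph{equivariant} version of the structure theorem for submersions \ref{strusubm}: one needs a $G_x$-invariant metric on $P$ making $P\to N_xO$ a Riemannian submersion (available because $G_x$ is compact, by averaging), whose exponential map gives an equivariant trivialization $V\cong P_0\times(\text{nbhd of }0)$ compatible with the action and the submersion. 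Without this averaging/exponential argument (or some substitute), the equivariant transport you describe is not defined, so the proof as written does not close.
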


\begin{proof}
We know that the groupoid, restricted to a saturated open $O\subset U\subset M$, is equivalent to the restriction to a slice $G_S\toto S$ (cf. \ref{slice.chart}). Now, by Zung's Theorem \ref{zung}, we can assume that the restrict to the slice is isomorphic to the action groupoid of the normal representation $G_x\action N_xO$.

The equivalence $(G_U\toto U)\sim (G_x\ltimes N_xO\toto O)$ can be realized by a principal bibundle
$$\begin{matrix}
   G_x\times N_xO & \action & P & \raction & G_U\\
  \downdownarrows & \swarrow & & \searrow & \downdownarrows \\
N_xO & & & & U\end{matrix}$$

On the other hand, the inclusion $N_xO\to NO$ of the fiber into the vector bundle induces another equivalence $(G_x\ltimes N_xO\toto N_xO)\sim (NG_O\toto NO)$ and thus we have another principal bibundle
$$\begin{matrix}
   G_x\times N_xO & \action & P' & \raction & NG_O\\
  \downdownarrows & \swarrow & & \searrow & \downdownarrows \\
N_xO& & & & NO\end{matrix}$$

It is easy to check that the central fibers $P_0$, $P'_0$ of the submersions $P\to N_xO$, $P'\to N_xO$ are in fact equal, and furthermore we can identify $P'$ with the product $P_0\times N_xO$ as $G_x$-spaces.
It follows from \ref{gauge&bibundles} that in order to establish the desired isomorphism it is enough to show that $P$ and $P'$ are isomorphic as $G_x\ltimes N_xO$-bundles in a neighborhood of the central fibers.

Now, a principal $G_x\ltimes N_xO$-bundle is the same as a free proper action $G_x\action P$ and an equivariant submersion $P\to N_xO$. Thus the result follows from an equivariant version of \ref{strusubm}. Just pick a $G_x$-invariant metric on $P$ which makes the submersion $P\to N_xO$ Riemannian. Such a metric can be constructed first by taking an invariant metric on $P$, using it to define an invariant {\it horizontal direction} (the orthogonal to the fibers) and then redefine the metric on the horizontal direction by lifting an invariant one in $M$.  Then the exponential map associated to this metric provides a diffeomorphism $V\cong V'$ between open subsets $P_0\subset V\subset P$ and $P'_0\subset V'\subset P'$ compatible with the action and the submersion.
\end{proof}

We can think of this linearization theorem as a variant of theorem \ref{strusubm} applied to the presentation $M\to [M/G]$. In fact, when $G\toto M$ is proper without isotropy we have seen that $[M/G]$ is in fact a manifold, $M\to [M/G]$ is a submersion, and in this case both theorems state the same. From this point of view, it makes sense to ask which would be the corresponding version of Ehresmann's theorem \ref{etheorem}. It turns out that the properness of the map $M\to [M/G]$ at a point $[x]$ can be expressed as the $s$-properness of $G\toto M$ at $x$. 

\begin{corollary}\label{strict linearization}
If $G\toto M$ is $s$-proper at $x$ then $G$ is strictly linearizable at $O_x$.
\end{corollary}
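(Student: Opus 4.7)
The plan is to upgrade the semistrict linearization already supplied by Theorem \ref{linearization} to a strict one, by exploiting the extra strength of $s$-properness over properness. Specifically, by Proposition \ref{stable2}, $s$-properness at $x$ is equivalent to properness at $x$ together with stability of the orbit $O=O_x$, and stability is exactly the tool needed to shrink the ambient neighborhood to a saturated one while keeping the isomorphism.

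First I would invoke Proposition \ref{stable2}: since $G\toto M$ is $s$-proper at $x$, it is proper at $x$ and the orbit $O$ is stable. Then Theorem \ref{linearization} applies, producing opens $O\subset U\subset M$ and $O\subset V\subset NO$ together with a groupoid isomorphism
$$\Phi\colon (G_U\toto U)\xto\cong ((NG_O)_V\toto V),$$
where $V$ is already saturated in $NO$ (semistrictness). What remains is to replace $U$ by a saturated subopen of itself without losing the isomorphism.

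Next I would use stability of $O$ to pick a saturated open $W$ with $O\subset W\subset U$, and set $W':=\Phi(W)\subset V$. Because $\Phi$ is a groupoid isomorphism, it carries $G$-saturated subsets of $U$ to $(NG_O)_V$-saturated subsets of $V$, so $W'$ is saturated in $(NG_O)_V$; and since $V$ itself is saturated in $NO$, any subset of $V$ saturated for the restricted groupoid is automatically saturated in $NG_O\toto NO$. Restricting $\Phi$ to $W$ then yields an isomorphism $(G_W\toto W)\cong ((NG_O)_{W'}\toto W')$ with both $W$ and $W'$ saturated, which is precisely a strict linearization of $G$ at $O_x$.

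The only obstacle is essentially bookkeeping around saturation: one must verify that $\Phi$ matches saturated subsets on the two sides and that saturation inside the already-saturated open $V$ coincides with saturation in the ambient $NO$. Both points are immediate, so the real content of the corollary is simply the observation that stability of $O_x$ is exactly the missing ingredient needed to promote ``semistrict'' to ``strict''.
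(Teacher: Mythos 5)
There is a genuine gap. Your argument hinges on the assertion that the linearization produced by Theorem \ref{linearization} is already \emph{semistrict}, i.e.\ that the open $V\subset NO$ in the isomorphism $(G_U\toto U)\cong((NG_O)_V\toto V)$ can be taken saturated. The theorem does not state this, and the paper never proves it: semistrictness is only claimed for Zung's theorem at a \emph{fixed point} (where the linear model is all of $T_xM$ and saturation is vacuous). For a general orbit it is false that an arbitrary open $V\supset O$ in $NO$ contains a saturated one --- already for the submersion groupoid of $\R\times\R^k\to\R^k$ the saturated opens around the zero section are the products $\R\times W$, and an open such as $\{(x,v):|v|<e^{-x^2}\}$ contains no such product. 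So ``$V$ is already saturated'' is not a free consequence of Theorem \ref{linearization}; establishing saturation on the linear-model side is precisely the content of the corollary, and your proof assumes it. Once that assumption is removed, your remaining steps (shrink $U$ to a saturated $W$ by stability, transport $W$ through $\Phi$, and note that $(NG_O)_V$-saturated subsets of a saturated $V$ are $NG_O$-saturated) are correct but cannot get off the ground, since $W'=\Phi(W)$ is only saturated for the restricted groupoid $(NG_O)_V$.

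The paper closes this gap differently: it goes back inside the proof of Theorem \ref{linearization}, exhibits the bibundle $P$ realizing the equivalence as a base-change situation $N_xO\from P\to U$, deduces from $s$-properness of $G_U$ at $x$ (via stability of properness under base-change) that the submersion $P\to N_xO$ is proper at $0$, and then applies the tube principle to shrink the linearizable open $P_0\subset V\subset P$ to a saturated one --- which produces saturated opens on \emph{both} sides at once. Your route could in principle be repaired: under $s$-properness the orbit is compact and one can check that the linear model $NG_O\toto NO$ is itself $s$-proper along the zero section (its $s$-fibers over the zero section are the compact fibers $G(-,x)$ and the source map is a locally trivial fibration), so the zero section is a stable orbit of the linear model and the given $V$ \emph{does} contain a saturated open; combining this with your shrinking of $U$ then works. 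But that verification is exactly the missing ingredient, and it is not ``immediate bookkeeping'' as your last paragraph suggests.
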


\begin{proof}
Write $N=N_xO$, and using notations of previous theorem, we have a diagram
$$\xymatrix{ P \ar[d] & P\times_NP \ar[l] \ar[r] \ar@{}[dr]|(.7){\lefthalfcap}
\ar@{}[dl]|(.7){\righthalfcap}\ar[d] & G_U \ar[d]\\ N & \ar[l] P \ar[r] & U}$$
on which the left square is clearly a pullback and, since the right square can be regarded as a principal $G_x$-bundle map $(P\times_NP\to G_U)\to (P\to U)$, it is a pullback as well.

Properness is stable under base-change and $P\to U$ is a submersion. Thus, the fact of $s:G_U\to U$ being proper at $x$ implies that $P\to N$ is proper at $0$. We can finally apply the tube principle (cf. \ref{etheorem}) and shrink a linearizable open $P_0\subset V\subset P$ provided by \ref{linearization} to a saturated one. The result now follows.
\end{proof}

\begin{example}
A nice example to understand the difference between strict and non-strict linearization is the groupoid arising from the projection $M\subset S^1\times\R\to \R$, where $M$ is obtained by removing a point over $0$. The linear model around the orbit over $0$ does not perceive that the nearby orbits in the original groupoid are in fact compact.
\end{example}

\begin{remark}
In \cite{riemannian} we generalize Theorem \ref{linearization} by establishing linearization not only around orbits but around any saturated embedded submanifold. This may be regarded as the existence of tubular neighborhoods for separated stacks. 
\end{remark}

\begin{remark}
Given an action $G\action M$ of a compact group on a manifold, the action groupoid $G\times M\toto M$ is s-locally trivial and thus can be strict linearizable (cf. \ref{strict linearization}). This gives a tube around the orbit on which the action can be describe by means of the behavior on the orbit and on a slice. This is the well-known {\it Tube Theorem} (cf. \cite[2.4.1]{dk}). The tube theorem remains valid not only for actions of compact groups but for proper actions of Lie groups in general. This shows that condition in \ref{strict linearization} is sufficient but not necessary, and as far as we know a characterization of strict linearizable groupoids is still not known. Our guess is that s-local triviality should be enough.
\end{remark}

\frenchspacing

\medskip

\address{Matias L. del Hoyo, 
Dto. de Matem\'atica, Instituto Superior T\'ecnico, Av. Rovisco Pais, 1049-001 Lisboa, Portugal}

\medskip

\email{mdelhoyo@math.ist.utl.pt}

\url{http://mate.dm.uba.ar/$\sim$mdelhoyo/}

\end{document}